\DeclarePairedDelimiter\ceil{\lceil}{\rceil}
\DeclarePairedDelimiter\floor{\lfloor}{\rfloor}
\title{Vertical versus horizontal Sobolev spaces}
\author{Katrin F\"assler and Tuomas Orponen}
\address{Department of Mathematics and Statistics \\ University of Jyv\"askyl\"a, P.O. Box. 35 (MaD), FI-40014 University of Jyv\"askyl\"a \\ Finland}
\email{katrin.s.fassler@jyu.fi}
\address{Department of Mathematics and Statistics\\ University of Helsinki,
P.O. Box 68 (Pietari Kalmin katu 5)\\
FI-00014 University of Helsinki\\
Finland}
\email{tuomas.orponen@helsinki.fi}
\date{\today}
\subjclass[2010]{46E35 (Primary), 26A33, 35R03, 43A15 (Secondary)}
\keywords{Sobolev spaces, Heisenberg group, Fractional derivatives, Lipschitz functions}
\thanks{T.O. is supported by the Academy of Finland via the project \emph{Quantitative rectifiability in Euclidean and non-Euclidean spaces}, grant No. 309365.}
\newcommand{\R}{\mathbb{R}}
\newcommand{\He}{\mathbb{H}}
\newcommand{\N}{\mathbb{N}}
\newcommand{\C}{\mathbb{C}}
\newcommand{\calD}{\mathcal{D}}
\newcommand{\spt}{\operatorname{spt}}
\newcommand{\Rea}{\operatorname{Re}}
\newcommand{\bmo}{\mathrm{BMO}}
\renewcommand{\Im}{\mathrm{Im\,}}
\def\Barint_#1{\mathchoice
          {\mathop{\vrule width 6pt height 3 pt depth -2.5pt
                  \kern -8pt \intop}\nolimits_{#1}}%
          {\mathop{\vrule width 5pt height 3 pt depth -2.6pt
                  \kern -6pt \intop}\nolimits_{#1}}%
          {\mathop{\vrule width 5pt height 3 pt depth -2.6pt
                  \kern -6pt \intop}\nolimits_{#1}}%
          {\mathop{\vrule width 5pt height 3 pt depth -2.6pt
                  \kern -6pt \intop}\nolimits_{#1}}}
\numberwithin{equation}{section}
\theoremstyle{plain}
\newtheorem{thm}[equation]{Theorem}
\newtheorem{lemma}[equation]{Lemma}
\newtheorem{ex}[equation]{Example}
\newtheorem{cor}[equation]{Corollary}
\newtheorem{proposition}[equation]{Proposition}
\theoremstyle{definition}
\newtheorem{definition}[equation]{Definition}
\theoremstyle{remark}
\newtheorem{remark}[equation]{Remark}
\newcommand{\nref}[1]{(\hyperref[#1]{#1})}
\begin{document}

\begin{abstract} Let $\alpha \geq 0$, $1 < p < \infty$, and let $\He^{n}$ be the Heisenberg group. Folland in 1975 showed that if $f \colon \He^{n} \to \R$ is a function in the \emph{horizontal Sobolev space} $S^{p}_{2\alpha}(\He^{n})$, then $\varphi f$ belongs to the Euclidean Sobolev space $S^{p}_{\alpha}(\R^{2n + 1})$ for any test function $\varphi$. In short, $S^{p}_{2\alpha}(\He^{n}) \subset S^{p}_{\alpha,\mathrm{loc}}(\R^{2n + 1})$. We show that the localisation can be omitted if one only cares for Sobolev regularity in the vertical direction: the horizontal Sobolev space $S_{2\alpha}^{p}(\He^{n})$ is continuously contained in the \emph{vertical Sobolev space} $V^{p}_{\alpha}(\He^{n})$.

Our search for the sharper result was motivated by the following two applications. First, combined with a short additional argument, it implies that bounded Lipschitz functions on $\He^{n}$ have a $\tfrac{1}{2}$-order vertical derivative in $\bmo(\He^{n})$. Second, it yields a fractional order generalisation of the (non-endpoint) \emph{vertical versus horizontal Poincar\'e} inequalities of V. Lafforgue and A. Naor.
\end{abstract}

\maketitle

\tableofcontents

\section{Introduction}

What is the relation between horizontal and vertical regularity of real-valued functions defined on the Heisenberg group $\He^{n} \cong (\mathbb{R}^{2n + 1},\cdot,d)$? For precise definitions, see Section \ref{s:prelim}. As a motivating example, consider a bounded Lipschitz function $f \colon \He^{n} \to \R$.
We denote the space of such functions by $W^{1,\infty}(\mathbb{H}^n)$; this notation is justified e.g.\ by \cite[Theorem 8]{MR1738070} and the references therein.
The restriction of $f\in W^{1,\infty}(\mathbb{H}^n)$ to any vertical line is Euclidean $\tfrac{1}{2}$-H\"older continuous. Conversely, any bounded Euclidean $\tfrac{1}{2}$-H\"older function defined on a fixed vertical line is Lipschitz in the metric $d$, and can be extended to  a function in $W^{1,\infty}(\He^{n})$.
Thus, Euclidean $\tfrac{1}{2}$-H\"older continuity on vertical lines is the sharpest "pointwise" conclusion that can be drawn about the regularity in vertical directions from $f\in W^{1,\infty}(\mathbb{H}^n)$. Corollary \ref{t:main} below shows that "on average", every $f \in W^{1,\infty}(\He^{n})$ actually has a little more vertical regularity:
\begin{displaymath} T^{1/2}f \in \bmo(\He^{n}). \end{displaymath}
This cannot be improved to $T^{1/2}f \in L^{\infty}(\He^{n})$; a counterexample is $f(x) = \min\{\|x\|_{\mathbb{H}},1\}$.

A general framework for our results is provided by the fractional order \emph{horizontal Sobolev spaces} $S^{p}_{\alpha}(\He^{n})$, for $1 < p < \infty$ and $\alpha \geq 0$, introduced by Folland \cite{MR0494315} in the 70s, see Definition \ref{d:Sobolev}. For $\alpha \in \N$, these spaces coincide with the standard horizontal Sobolev spaces $W^{\alpha,p}(\He^{n})$ consisting of functions with $L^{p}$ horizontal derivatives of all orders between $0$ and $\alpha$. Folland \cite[Theorem (4.16)]{MR0494315} showed that if $f \in S^{p}_{2\alpha}(\He^{n})$, then $\varphi f \in S^{p}_{\alpha}(\R^{2n + 1})$ for every test function $\varphi \colon \R^{2n + 1} \to \R$. Here $S^{p}_{\alpha}(\R^{2n + 1})$ refers to the standard Euclidean Sobolev space, see Definition \ref{d:SobR}. In other words,
\begin{equation}\label{follandInclusion} S^{p}_{2\alpha}(\He^{n}) \subset S^{p}_{\alpha,\mathrm{loc}}(\R^{2n + 1}). \end{equation}
In fact, Folland proves an analogue of \eqref{follandInclusion} for all Carnot groups.

The need for localisation in \eqref{follandInclusion} follows from formulae such as
\begin{displaymath} \partial_{i} = X_{i} + \frac{y_{i}}{2}[X_{i},Y_{i}], \qquad 1 \leq i \leq n, \end{displaymath}
which express Euclidean partial derivatives as second order horizontal derivatives with \textbf{polynomial} coefficients. In contrast, $\partial_{2n + 1} = T = [X_{i},Y_{i}]$ is a \textbf{constant} coefficient second order horizontal derivative. This suggests that the localisation in the inclusion \eqref{follandInclusion} can be omitted if one is only interested in the regularity of horizontal Sobolev functions in the vertical "$T$" direction. Such a "global" inclusion was critical for the applications we had in mind, namely Corollary \ref{t:main} and inequality \eqref{LNFrac}. In Definition \ref{def:verticalSobolev}, we will define the \emph{vertical Sobolev space} $V_{\alpha}^{p}(\He^{n})$. With this notation in place, our main result is the following:
\begin{thm}\label{main} $S^{p}_{2\alpha}(\He^{n}) \subset V^{p}_{\alpha}(\He^{n})$ for $\alpha \geq 0$ and $1 < p < \infty$. The inclusion map is continuous. \end{thm}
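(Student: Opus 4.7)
The theorem is equivalent to showing that the comparison operator
\[
R_\alpha := (I - T^2)^{\alpha/2}\,(I - \mathcal{L})^{-\alpha}
\]
is bounded on $L^p(\He^n)$ for every $\alpha \geq 0$ and every $1 < p < \infty$. Indeed, if $f \in S^p_{2\alpha}(\He^n)$ and $g := (I - \mathcal{L})^\alpha f \in L^p$, then $(I - T^2)^{\alpha/2} f = R_\alpha g$, so a uniform bound on $\|R_\alpha\|_{L^p \to L^p}$ yields the continuous embedding into $V^p_\alpha$. The operator $R_\alpha$ is a genuine joint spectral multiplier because $T$ is central in the Heisenberg Lie algebra: it commutes with every $X_j, Y_j$, and hence with $\mathcal{L}$.

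At $p = 2$ the bound follows from representation theory. In the Schr\"odinger representation $\pi_\lambda$, $-iT$ acts as multiplication by $\lambda \in \R$ and $-\mathcal{L}$ acts as the harmonic oscillator $H_\lambda = -\Delta_u + \lambda^2 |u|^2$ on $L^2(\R^n)$, whose spectrum equals $\{|\lambda|(2k + n) : k \in \N_0\}$. Hence $H_\lambda \geq n|\lambda|$, which on the joint spectrum of $(-\mathcal{L}, -iT)$ reads $\mu \geq n|\tau|$. Under this constraint the joint symbol $(1 + |\tau|^2)^{\alpha/2}(1 + \mu)^{-\alpha}$ is uniformly bounded, so Plancherel delivers $\|R_\alpha\|_{L^2 \to L^2} \leq C_\alpha$ at once.

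To pass to $L^p$ for $p \neq 2$, my plan is Stein complex interpolation along the analytic family $R_z := (I - T^2)^{z/2}(I - \mathcal{L})^{-z}$ on the strip $\Rea z \in [0, N]$ for an integer $N \geq \alpha$. On $\Rea z = 0$, $R_{i\sigma}$ is a product of imaginary powers of two commuting positive operators, and $L^p$-boundedness with polynomial-in-$|\sigma|$ growth follows from classical spectral multiplier theory for the sub-Laplacian on $\He^n$ (Hulanicki, Mauceri--Meda, M\"uller--Stein) combined with the one-dimensional Bessel bound $\|(I - T^2)^{i\sigma/2}\|_{L^p \to L^p} \lesssim (1 + |\sigma|)^{c_p}$ applied in the vertical variable. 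On $\Rea z = N$, $T^N$ expands into a horizontal differential operator of order $2N$ by iterating $T = [X_1, Y_1]$, so $T^N(I - \mathcal{L})^{-N}$ is $L^p$-bounded by Folland's integer-order theory; the residual imaginary factors are absorbed by the same one-dimensional bound.

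The essential obstacle is the polynomial-in-$|\sigma|$ $L^p$ control of $(I - \mathcal{L})^{i\sigma}$, the only nontrivial analytic input; everything else is organizational. Modulo that ingredient, Stein interpolation between the $L^2$ endpoint at $\Rea z = 0$ and the $L^p$ endpoint at $\Rea z = N$ gives $\|R_\alpha\|_{L^p \to L^p} \leq C_{\alpha, p}$ for every $\alpha \in [0, N]$, and since $N$ is arbitrary the continuous inclusion $S^p_{2\alpha}(\He^n) \subset V^p_\alpha(\He^n)$ follows.
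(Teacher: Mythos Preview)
Your strategy is essentially the paper's own: reduce the embedding to the $L^p$-boundedness of a comparison operator (the paper uses $\Lambda_\alpha = (1-\bigtriangleup)^{-\alpha}|T|^{\alpha}$, you use $(I-T^2)^{\alpha/2}(I-\mathcal L)^{-\alpha}$), and prove that boundedness by complex interpolation between the line $\Rea z=0$ (imaginary powers) and an even integer line $\Rea z=A$, where $T$ becomes a constant-coefficient second-order horizontal operator via $T=[X_1,Y_1]$. At the integer endpoint the paper verifies directly that $g\mapsto (Tg)\ast B_1$ is a Calder\'on--Zygmund operator on $\He^n$, which is precisely the content of your appeal to ``Folland's integer-order theory''. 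On $\Rea z=0$ the paper uses only Folland's exponential bound $\|(1-\bigtriangleup)^{i\sigma}\|_{L^p\to L^p}\lesssim e^{\pi|\sigma|}$ together with the Marcinkiewicz multiplier theorem for $|T|^{i\sigma}$; this is more elementary than the spectral multiplier theory you invoke, though either suffices since Phragm\'en--Lindel\"of tolerates any sub-double-exponential growth.

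One slip to fix: your closing sentence speaks of ``Stein interpolation between the $L^2$ endpoint at $\Rea z=0$ and the $L^p$ endpoint at $\Rea z=N$''. Taken literally this would only produce $L^{p_\theta}$-bounds with $1/p_\theta=(1-\theta)/2+\theta/p$, not $L^p$. You need (and in the preceding paragraph you actually claim) $L^p\to L^p$ bounds on \emph{both} boundary lines; the paper does exactly this. Also, choose $N$ even so that $(I-T^2)^{N/2}$ is a genuine differential operator in $T$ --- this matches the paper's choice $A\in 2\N$.
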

Having established Theorem \ref{main} in Section \ref{sec3}, we study the existence of "pointwise" $t$-derivatives of horizontal Sobolev functions. A natural fractional $t$-derivative of order $\alpha \in (0,1)$ is given by the principal value
\begin{equation}\label{Talpha} T^{\alpha}f(z,t) := \lim_{\varepsilon \to 0} \int_{\R \setminus (-\varepsilon,\varepsilon)} \frac{f(z,t + r) - f(z,t)}{|r|^{1 + \alpha}} \, dr, \qquad (z,t) \in \R^{2n} \times \R. \end{equation}
Relying on a result of Wheeden \cite{MR0240682}, we can conclude the following:
\begin{proposition}\label{main2} Let $0 < \alpha < 1$, $1 < p < \infty$, and $f \in S^{p}_{2\alpha}(\He^{n})$. Then the limit in \eqref{Talpha} exists a.e. and in $L^{p}(\He^{n})$. The function $T^{\alpha}f \in L^{p}(\He^{n})$ is also a distributional derivative in the sense that
\begin{equation}\label{form2} \int_{\He^{n}} \varphi \cdot T^{\alpha}f = \int_{\He^{n}} f \cdot T^{\alpha}\varphi, \qquad \varphi \in \mathcal{D}. \end{equation}
\end{proposition}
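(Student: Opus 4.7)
\emph{Proof plan.} The strategy is to combine Theorem~\ref{main} with Wheeden's one-dimensional characterization of Bessel-potential spaces via hypersingular integrals, applied slice-by-slice in $z \in \R^{2n}$. By Theorem~\ref{main}, $f \in V^{p}_{\alpha}(\He^{n})$, and by the definition of this vertical Sobolev space together with Fubini's theorem, the slice $t \mapsto f(z,t)$ belongs to the one-dimensional Bessel-potential space $L^{p}_{\alpha}(\R)$ for a.e.\ $z \in \R^{2n}$, with a slice-wise norm $N(z)$ whose $L^{p}(\R^{2n})$-norm is controlled by $\|f\|_{S^{p}_{2\alpha}(\He^{n})}$.

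Denote by $T^{\alpha}_{\varepsilon}f$ the $\varepsilon$-truncated integral appearing in \eqref{Talpha}; since the group law on $\He^{n}$ reduces to ordinary addition in the vertical direction, this integral acts purely in the $t$-variable on each slice. Wheeden's theorem asserts that, on each slice $z$ with $N(z) < \infty$, the truncations $T^{\alpha}_{\varepsilon}f(z,\cdot)$ converge a.e.\ and in $L^{p}(\R,dt)$ as $\varepsilon \to 0$, with a maximal majorant $T^{\alpha}_{\ast}f(z,\cdot)$ whose $L^{p}(\R)$-norm is bounded by a constant times $N(z)$. Integrating in $z$, Fubini and dominated convergence (with majorant $T^{\alpha}_{\ast}f \in L^{p}(\He^{n})$) upgrade the slice-wise convergence to a.e.\ pointwise and $L^{p}(\He^{n})$ convergence of $T^{\alpha}_{\varepsilon}f$ to a limit $T^{\alpha}f \in L^{p}(\He^{n})$.

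For the distributional identity \eqref{form2}, I would argue first at the level of truncations. For each fixed $\varepsilon > 0$, the kernel $\mathbf{1}_{\{|r|>\varepsilon\}}|r|^{-1-\alpha}$ is integrable on $\R$, so $T^{\alpha}_{\varepsilon}f \in L^{p}(\He^{n})$ by Minkowski's inequality, while $T^{\alpha}_{\varepsilon}\varphi$ is smooth, compactly supported in $z$, and integrable in $t$. Fubini, together with the substitution $t \mapsto t-r$ and the symmetry $|r|=|-r|$ of the kernel, yields
\[
\int_{\He^{n}} \varphi \cdot T^{\alpha}_{\varepsilon}f \;=\; \int_{\He^{n}} f \cdot T^{\alpha}_{\varepsilon}\varphi.
\]
The left-hand side converges to $\int_{\He^{n}} \varphi \cdot T^{\alpha}f$ by the $L^{p}$-convergence established above and $\varphi \in L^{p'}$. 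For the right-hand side, a direct estimate using the compact $z$-support of $\varphi$ and the $|t|^{-1-\alpha}$ tail decay of $T^{\alpha}\varphi$ in $t$ shows $T^{\alpha}_{\varepsilon}\varphi \to T^{\alpha}\varphi$ in $L^{p'}(\He^{n})$, so the right-hand side converges to $\int_{\He^{n}} f \cdot T^{\alpha}\varphi$, giving \eqref{form2}.

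The main obstacle is translating the $V^{p}_{\alpha}(\He^{n})$-regularity into the precise slice-wise Bessel-potential statement required by Wheeden's theorem, and then carrying through the Fubini and dominated-convergence arguments in $z$ cleanly; everything else is a standard duality calculation.
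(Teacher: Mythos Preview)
Your proposal is correct and follows essentially the same route as the paper: reduce to slices via Theorem~\ref{main}, apply Wheeden's one-dimensional theorem on each slice, and then pass to the limit in the duality identity $\int \varphi \cdot T^{\alpha}_{\varepsilon}f = \int f \cdot T^{\alpha}_{\varepsilon}\varphi$ to obtain \eqref{form2}. The one tactical difference is in the $L^{p}(\He^{n})$-convergence step: you invoke a maximal majorant $T^{\alpha}_{\ast}f$ and dominated convergence, whereas the paper instead uses the uniform bound $\|T^{\alpha,\varepsilon}f\|_{p} \lesssim \|f\|_{p,2\alpha}$ together with density of $\mathcal{D}$ in $S^{p}_{2\alpha}(\He^{n})$ and a Cauchy-sequence argument. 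Both are valid; your version is slightly more direct but relies on the maximal inequality from Wheeden's paper (which is indeed available there), while the paper's version needs only the uniform bound on truncations and a separate lemma that $T^{\alpha}\varphi$ exists in $L^{p}$ for $\varphi \in \mathcal{D}$.
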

Finally, neither result above covers directly the case of bounded Lipschitz functions mentioned earlier, but a small additional argument yields the promised conclusion:
\begin{cor}\label{t:main} Let $f \in W^{1,\infty}(\He^{n})$. Then $T^{1/2}f$ exists a.e. and in the distributional sense \eqref{form2}, and $T^{1/2}f \in \bmo(\He^{n})$ with $\|T^{1/2}f\|_{\bmo} \lesssim \|\nabla_{\He}f\|_{\infty}$. \end{cor}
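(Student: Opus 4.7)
The plan is to localize the operator $T^{1/2}$ so that Proposition \ref{main2} applies to a bulk contribution, and to handle the tail directly using the fact that the $T$-direction is central in $\He^{n}$. Write $L := \|\nabla_{\He}f\|_{\infty}$ and fix a ball $B = B_{d}(x_{0}, \rho)$. Since $T^{1/2}$ annihilates constants, it is harmless to replace $f$ by $\tilde{f} = f - f(x_{0})$, which is Lipschitz with $|\tilde{f}(x)| \lesssim L\, d(x, x_{0})$. Choose a smooth cutoff $\varphi$ with $\varphi \equiv 1$ on $2B$, $\spt \varphi \subset CB$ for a fixed large $C$, and $|\nabla_{\He}\varphi| \lesssim 1/\rho$, and decompose $\tilde{f} = g + h$ with $g := \varphi \tilde{f}$ and $h := (1-\varphi) \tilde{f}$. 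Then $g$ is compactly supported with $\|g\|_{\infty} \lesssim L\rho$ and $\|\nabla_{\He}g\|_{\infty} \lesssim L$, so $g \in S^{p}_{1}(\He^{n})$ for every $1 < p < \infty$.

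Proposition \ref{main2} then gives $T^{1/2}g$ as a simultaneous a.e.\ and $L^{p}$ limit obeying \eqref{form2}. For the tail I will use that the central element $(0, r)$ acts by an isometry under right translation, so $(z, t+r) \in 2B$ whenever $x = (z, t) \in B$ and $|r|^{1/2} \lesssim \rho$; since $h$ vanishes on $2B$, the principal value $T^{1/2}h(x)$ collapses to the absolutely convergent
\begin{displaymath}
T^{1/2}h(x) = \int_{|r| \gtrsim \rho^{2}} \frac{h(z, t+r)}{|r|^{3/2}}\, dr
\end{displaymath}
(convergence at infinity from the boundedness of $h$). I will set $T^{1/2}f := T^{1/2}g + T^{1/2}h$ on $B$; this defines $T^{1/2}f$ a.e.\ on $\He^{n}$ independently of $\varphi$, and the distributional identity \eqref{form2} extends from $g$ to $f$ by Fubini applied to the tail.

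For the $\bmo$-estimate I will first use left-translation- and dilation-invariance of the inequality to reduce to $\rho = 1$ and $x_{0} = e$. In this case $\|g\|_{S^{p}_{1}} \lesssim L$, Theorem \ref{main} gives $\|T^{1/2}g\|_{L^{p}(\He^{n})} \lesssim L$, and H\"older yields $\tfrac{1}{|B|}\int_{B}|T^{1/2}g|\, dx \lesssim L$. The crux is the tail bound $|T^{1/2}h(x) - T^{1/2}h(x_{0})| \lesssim L$ uniformly in $x \in B$. Writing this difference as $\int [h(z, t+r) - h(z_{0}, t_{0}+r)]|r|^{-3/2}\, dr$, I will split the $r$-axis into three regimes: for $|r|$ small both $h$-terms vanish; for $|r| \asymp 1$ both are $O(L)$ from the Lipschitz bound on $\spt \varphi$; for $|r|$ large $1-\varphi$ equals $1$ at both sample points, so the integrand becomes $[f(z, t+r) - f(z_{0}, t_{0}+r)]|r|^{-3/2}$, and the key identity
\begin{displaymath}
d((z, t+r), (z_{0}, t_{0}+r)) = d((z, t), (z_{0}, t_{0})) \leq 1,
\end{displaymath}
valid because $(0, r)$ is central in $\He^{n}$, combined with Lipschitz control gives $|f(z, t+r) - f(z_{0}, t_{0}+r)| \lesssim L$. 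Each regime integrates to $\lesssim L$. I expect the main structural ingredient to be this isometric invariance of central translations -- without it, the large-$|r|$ tail would produce a logarithmic divergence -- with the remaining steps being routine cutoff manipulations and direct appeals to Theorem \ref{main} and Proposition \ref{main2}.
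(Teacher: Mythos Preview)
Your proposal is correct and follows essentially the same approach as the paper: both localize via a cutoff adapted to the ball, apply Proposition \ref{main2} to the compactly supported piece, observe that the tail collapses to an absolutely convergent integral since $h$ vanishes near $B$, and reduce the $\bmo$ estimate by translation and dilation to a unit ball at the origin with the subtracted constant $c = T^{1/2}h(x_{0})$. The only cosmetic difference is that the paper bounds the tail difference in one stroke by observing that $h = (1-\psi)\tilde f$ is itself Lipschitz with constant $O(L)$ (since $\tilde f(x_{0})=0$ forces $|\tilde f| \lesssim L$ on $\spt\nabla_{\He}\psi$), whereas you split the $r$-axis into three regimes; your identification of the central-translation identity $d((z,t+r),(z_{0},t_{0}+r)) = d((z,t),(z_{0},t_{0}))$ as the key structural point matches the paper exactly.
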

The proofs of Proposition \ref{main2} and Corollary \ref{t:main} can be found in Section \ref{sec4}. 

\subsection{Connections to previous work} We now briefly discuss a collection of topics closely related to the results explained above.

\subsubsection{Vertical versus horizontal Poincar\'{e} inequalities} The connection between horizontal and vertical regularity of functions on the Heisenberg group has been recently studied by Austin-Naor-Tessera \cite{MR3095705}, Lafforgue-Naor \cite{MR3273443}, and Naor-Young \cite{NY}. They established a number of estimates named \emph{vertical versus horizontal Poincar\'e inequalities}. A general form, taken from \cite[Theorem 2.1]{MR3273443}, reads as follows:
\begin{equation}\label{LN} \left( \int_{\R} \left[ \int_{\He} \left(\frac{|f(z,t + r) - f(z,t)|}{|r|^{1/2}} \right)^{p} \, dz \, dt \right]^{q/p} \, \frac{dr}{|r|} \right)^{1/q} \lesssim_{p,q} \|\nabla_{\He}f\|_{p} \end{equation}
for $f \in \mathcal{D}$, $q \geq 2$, and $1 < p \leq q < \infty$. The same estimate with $p = q = 2$ was essentially contained in \cite{MR3273443}. In \cite[Theorem 35]{NY}, the authors prove that \eqref{LN} remains valid for $p = 1$ and $q = 2$ in $\He^{n}$ for $n \geq 2$; the case $p = 1$ in $\He^{1}$ remains open (see however \cite[Remark 12]{NY}, and the "\emph{Added in proof}" remark at the end of \cite{NY}).

The relationship between \eqref{LN} and Theorem \ref{main} is the following.
\begin{itemize}
\item Theorem \ref{main} implies the following fractional order generalisation of \eqref{LN}:
\begin{equation}\label{LNFrac} \left( \int_{\R} \left[ \int_{\He^{n}} \left(\frac{|f(z,t + r) - f(z,t)|}{|r|^{\alpha}} \right)^{p} \, dz \, dt \right]^{q/p} \, \frac{dr}{|r|} \right)^{1/q} \lesssim_{\alpha,p,q} \|f\|_{p,2\alpha}, \end{equation}
Here $0 < \alpha < 1$, $q \geq 2$, and $1 < p \leq q < \infty$, see Theorem \ref{main_LN_section}. Our technique gives nothing for $p = 1$, and possible analogues of Theorem \ref{main} in this case present an interesting open problem.
\item Theorem \ref{main} is (likely) a little sharper than \eqref{LN}-\eqref{LNFrac} in the cases $p = q \neq 2$.
\end{itemize}
This additional sharpness turns out crucial in the application to Proposition \ref{main2}, so we discuss it further. Fix $p = q \geq 2$, $0 < \alpha < 1$, and let $f \in S^{p}_{2\alpha}(\He^{n})$. For $z \in \R^{2n}$, consider the functions $f_{z} \colon \R \to \R$, defined by $f_{z}(t) = f(z,t)$. Theorem \ref{main} will imply that $f_{z} \in S^{p}_{\alpha}(\R)$ for a.e. $z \in \R^{2n}$. On the other hand, \eqref{LNFrac} implies that
\begin{equation}\label{form1} \int_{\R} \frac{\|t \mapsto f_{z}(t + r) - f_{z}(t)\|_{L^{p}(\R)}^{p}}{|r|^{1 + \alpha p}} \, dr < \infty \qquad \text{for a.e. } z \in \R^{2n}. \end{equation}
So, the question on the relationship between Theorem \ref{main} and \eqref{LN} (in the case $p = q$) boils down to: \emph{what is the connection between the conditions $f_{z} \in S^{p}_{\alpha}(\R)$ and \eqref{form1}?} This question has complete answers, see \cite[V \S 3.5.2]{MR0290095}, and they depend on $p$. The conclusion is that for $p = 2$, the conditions are equivalent (for functions \emph{a priori} in $L^{2}(\R)$), but for $p > 2$, only the implication
\begin{displaymath} f_{z} \in S^{p}_{\alpha}(\R) \quad \Longrightarrow \quad \eqref{form1} \end{displaymath}
holds. Counterexamples against the other implication can be found in \cite[p. 161, \S 6.8]{MR0290095}. As mentioned above, the sharper conclusion $f_{z} \in S^{p}_{\alpha}(\R)$ will be needed in the proof of Proposition \ref{main2}.

\subsubsection{Regularity theory of subelliptic equations} In the study of certain
subelliptic partial differential equations on $\R^{2n + 1}$ of the form
\begin{displaymath}
\sum_{i=1}^{2n} X_i(a_i(\nabla_{\mathbb{H}} f)) =0,
\end{displaymath}
it is natural to consider weak solutions $f \in W^{1,p}(\mathbb{H}^n)$ without \emph{a priori} assumptions on $Tf$. Then, in order to establish regularity for the full gradient $\nabla f$, one has to derive information about the integrability and smoothness of $Tf$. An overview on this topic can be found in Section 1.3 of \cite{MR2531368}. The first results on the H\"older continuity of gradients of solutions to quasi-linear sub-elliptic equations in divergence form on $\mathbb{H}^n$ were obtained by Capogna \cite[Theorem 1.1]{MR1459590}. As explained in \cite[Section 3]{MR1459590}, fractional derivatives in the $T$-direction play an important role in his approach; see in particular \cite[(2.16)-(2.17)]{MR1459590} and the argument starting at \cite[(3.4)]{MR1459590}.


\subsubsection{Parabolic Lipschitz functions} While studying boundary value problems for the heat equation, Lewis and Murray \cite[Chapter III]{MR1323804} introduced the notion of \emph{parabolic Lipschitz functions}. They are certain real-valued functions defined on the parabolic space $(\R^{n},d_{\textup{par}}) = (\R^{n - 1} \times \R, |\cdot| \times \sqrt{|\cdot|})$. Domains in $\R^{n + 1}$ bounded by the graphs of these functions have turned out to be the natural analogue (in the parabolic setting) of Euclidean Lipschitz domains (in the elliptic setting), see for example \cite{MR1418902,MR1996443}. Parabolic Lipschitz functions are, by definition, Lipschitz continuous in the first $(n - 1)$ "horizontal" variables. In the last "vertical" variable, they are required to have a $\tfrac{1}{2}$-order partial derivative in $\bmo(\R^{n},d_{\textup{par}})$. A first motivation for this paper was, in fact, to find out if Lipschitz functions in $\He^{n}$ "automatically" satisfy this last condition, which needs to be \textbf{assumed} in the parabolic world. Corollary \ref{t:main} shows that this is indeed the case.

\section{Preliminaries}\label{s:prelim}

\subsection{The Heisenberg group} Recall that the \emph{Heisenberg group} $\mathbb{H}^n$ is $\mathbb{R}^{2n+1}$ equipped with the group product
\begin{equation}\label{eq:group}
(x,y,t)\cdot(x',y',t'):=\left(x+x',y+y',t+t'+\sum_{i=1}^n\tfrac{1}{2}x_iy_i'-\tfrac{1}{2}y_ix_i'\right),
\end{equation}
where $x=(x_1,\ldots,x_n)$, $y=(x_1,\ldots,y_n) \in \mathbb{R}^n$, and $t\in\mathbb{R}$.
A frame for the left-invariant vector fields on $\mathbb{H}^n$ is given by
\begin{equation}\label{eq:vfd}
X_i= \partial_{x_i} -\tfrac{y_i}{2}\partial_t,\quad Y_i=\partial_{y_i}+\tfrac{x_i}{2}\partial_{t}\quad (i=1,\ldots,n),\quad T=\partial_t.
\end{equation}
{We also use the notation $X_{n+i}:=Y_i$, $(i=1,\ldots,n)$, where convenient.}
The \emph{horizontal gradient} of a function $f:\mathbb{H}^n \to
\mathbb{R}$ is
\begin{equation}\label{eq:horiz_grad}
\nabla_{\mathbb{H}} f := (X_1f,\ldots,X_nf,Y_1f,\ldots, Y_nf),
\end{equation}
if the \emph{horizontal derivatives} $X_i f$ and $Y_i f$ exist in the
distributional sense or in the classical sense pointwise almost
everywhere. We equip $\mathbb{H}^n$ with a left-invariant metric $d$ that
induces the topology of $\mathbb{R}^{2n+1}$ and is homogeneous with
respect to the \emph{Heisenberg dilations}
\begin{displaymath}
\delta_{\lambda}:\mathbb{H}^n \to \mathbb{H}^n,\quad \delta_{\lambda}(z,t):= (\lambda z,\lambda^2 t)
\end{displaymath}
for all $\lambda >0$. All metrics with these properties are bi-Lipschitz equivalent. {For convenience in explicit computations, we use the \emph{Kor\'{a}nyi distance} given by
\begin{displaymath}
d((z,t),(z',t')):= \|(z',t')^{-1} \cdot (z,t)\|_{\mathbb{H}},
\end{displaymath}
with
\begin{displaymath}
\|(\zeta,\tau)\|_{\mathbb{H}}:=\sqrt[4]{|\zeta|^4 + 16\tau^2}, \qquad (\zeta,\tau)\in\mathbb{R}^{2n}\times\mathbb{R}.
\end{displaymath}
}

\subsection{Function spaces}

In this section, we introduce the spaces studied in the paper. Unless otherwise specified, integration over $\mathbb{H}^{n}$ is performed with respect to the Lebesgue measure $\mathcal{L}^{2n + 1}$, which is a left- and right-invariant Haar measure on $\mathbb{H}^{n}$. We write $\int f(x) \, dx$ (or simply $\int f$) instead of $\int f(x) \,d\mathcal{L}^{2n + 1}(x)$.

\begin{definition}[Schwartz functions] The \emph{Schwartz space} $\mathcal{S} = \mathcal{S}(\He^{n})$ consists of the standard Schwartz functions on $\mathbb{R}^{2n + 1}$, see for instance \cite[Definition 1.6.8 and \S 3.1.9]{MR3469687}. The Schwartz functions on $\R$ will also appear, and will be denoted by $\mathcal{S}(\R)$.
\end{definition}

We will need horizontal Sobolev spaces of fractional order in this paper. The definition involves the fractional Laplace operators $(-{\bigtriangleup }_{p})^{\alpha}$ and $(1 - \bigtriangleup_{p})^{\alpha}$, for $\alpha \in \C$ and $1 < p < \infty$, defined initially on
\begin{displaymath} \mathrm{Dom}((-{\bigtriangleup }_{p})^{\alpha}) := \{f \in L^{p}(\He^{n}) : (-\bigtriangleup_{p})^{\alpha}f \in L^{p}(\He^{n})\} \subset L^p(\mathbb{H}^n)\end{displaymath}
and
\begin{equation}\label{OneMinusDelta} \mathrm{Dom}((1 - {\bigtriangleup }_{p})^{\alpha}) := \{f \in L^{p}(\He^{n}) : (1 -\bigtriangleup_{p})^{\alpha}f \in L^{p}(\He^{n})\} \subset L^p(\mathbb{H}^n), \end{equation}
respectively. We will never need, in full generality, the definitions of the operators $-\bigtriangleup_{p}$, $(-{\bigtriangleup }_{p})^{\alpha}$, and $(1 - {\bigtriangleup }_{p})^{\alpha}$, so we will not give them here, lengthy as they are. Some special cases are elaborated on in Section \ref{sec3}, and for more information, we refer to either the original work of Folland \cite[p. 181,186]{MR0494315} (where the notation $\mathcal{J}_p$ and $I+\mathcal{J}_p$ was used), or the monograph \cite{MR3469687}, Section 4.3.1 onwards. We will denote by $\bigtriangleup$ (without subscript) the standard sub-Laplacian $\bigtriangleup = \sum_{j = 1}^{2n} X_{j}^{2}$ on $\He^{n}$.

\begin{definition}[Horizontal Sobolev spaces of fractional order]\label{d:Sobolev} Let $\alpha \geq 0$, $1<p<\infty$.
 The  \emph{horizontal Sobolev space of order $\alpha$} is
  \begin{displaymath} S^{p}_{\alpha}(\mathbb{H}^{n}) := (\mathrm{Dom}((-{\bigtriangleup }_{p})^{\alpha/2}), \|\cdot\|_{p,\alpha}), \end{displaymath}
  where $\|\cdot\|_{p,\alpha}$ is the norm
  $$\|f\|_{p,\alpha}:=\|f\|_p+ \|(-{\bigtriangleup }_{p})^{\alpha/2}(f)\|_{p}.$$
\end{definition}

We briefly discuss some fundamental properties of the spaces $S^{p}_{\alpha}(\He^{n})$; for more information, see \cite[Section 4.4]{MR3469687}.
\begin{remark}\label{sobolevRemark} (a) For $\alpha = k \in \N$ and $1 < p < \infty$, the space $S_{k}^{p}(\He^{n})$ coincides with "standard" horizontal Sobolev space
\begin{displaymath} W^{k,p}(\He^{n}) := \{f \in L^{p}(\He^{n}) : X^{\gamma}f \in L^{p}(\He^{n}) \text{ for all } \gamma \in \{1,\ldots,2n\}^{\ast} \text{ with } |\gamma| \leq k\}. \end{displaymath}
Here $X^{\gamma} = X_{\gamma_{1}}\cdots X_{\gamma_{l}}f$ stands for the distributional horizontal derivative corresponding to the multi-index $\gamma = (\gamma_{1},\ldots,\gamma_{l}) \in \{1,\ldots,2n\}^{\ast}$. Also, the quantity
\begin{displaymath} \|f\|_{W^{k,p}} := \sum_{|\gamma| \leq k} \|X^{\gamma}f\|_{p}, \qquad f \in W^{k,p}(\He^{n}), \end{displaymath}
is equivalent $\|f\|_{p,k}$. For the proof of these statements, see \cite[Corollary (4.13)]{MR0494315}. In this paper, we will only need the special case
\begin{equation}\label{W2Equivalence} \|f\|_{2,1} \sim \|f\|_{2} + \|\nabla_{\He}f\|_{2}, \qquad f \in W^{1,2}(\He^{n}). \end{equation}

(b)  The space $\mathcal{D}$ of smooth and compactly supported functions is dense in $(S^{p}_{\alpha},\|\cdot\|_{p,\alpha})$ for all $\alpha \geq 0$ and $1 < p < \infty$, see \cite[Theorem (4.5)]{MR0494315}.

(c) For $\alpha \geq 0$ and $1 < p < \infty$, the space $S^{p}_{\alpha}(\mathbb{H}^{n})$ coincides with  $\mathrm{Dom}((1-{\bigtriangleup }_{p})^{\alpha/2})$  and the following norms are equivalent to $\|\cdot\|_{p,\alpha}$:
\begin{displaymath}
f\mapsto \|f\|_p+ \|(1-{\bigtriangleup }_{p})^{\alpha/2}(f)\|_{p} \quad\text{and}\quad
f\mapsto \|(1-{\bigtriangleup }_{p})^{\alpha/2}(f)\|_{p}.
\end{displaymath}
For a proof, see \cite[Proposition (4.1)]{MR0494315}.

(d) For $0 < \alpha < 1$ and $1 \leq p < \infty$, the following class is also sometimes referred to (see e.g. \cite[Section 1.2]{MR3732178}) as the horizontal Sobolev functions of order $\alpha$:
\begin{displaymath} \Lambda^{p,p}_{\alpha}(\He^{n}) := \left\{f \in L^{p}(\He^{n}) : \iint_{\He^{n} \times \He^{n}} \frac{|f(x) - f(y)|_{\mathbb{H}}^{p}}{\|y^{-1} \cdot x\|^{(2n + 2) + \alpha p}} \, dx \, dy < \infty \right\}. \end{displaymath}
The relationship between the spaces $S^{p}_{\alpha}(\He^{n})$ and $\Lambda^{p,p}_{\alpha}(\He^{n})$ is described in \cite[Theorem 18 \& 20]{MR558675}: for $\alpha \in (0,1)$, the following inclusions hold:
\begin{displaymath} \Lambda^{p,p}_{\alpha}(\He^{n}) \subset S^{p}_{\alpha}(\He^{n}) \text{ for } 1 < p \leq 2 \quad \text{and} \quad S^{p}_{\alpha}(\He^{n}) \subset  \Lambda^{p,p}_{\alpha}(\He^{n}) \text{ for } 2 \leq p < \infty.  \end{displaymath}
In particular, $\Lambda_{\alpha}^{2,2}(\He^{n}) = S_{\alpha}^{2}(\He^{n})$. For us, the main benefit of using the Sobolev spaces $S^{p}_{\alpha}(\He^{n})$ (over $\Lambda^{p,p}_{\alpha}(\He^{n})$) is that the definition works for all $\alpha \geq 0$, and the spaces coincide with the standard horizontal Sobolev spaces for $\alpha \in \N$.

\end{remark}
Having now defined the horizontal Sobolev spaces, we turn to the definition of \emph{vertical Sobolev spaces}. We first need to recall the definition of fractional order Sobolev spaces in $\R^{n}$. We are not being very efficient here, since the Sobolev spaces in $\He^{n}$ and $\R^{n}$ are both covered by Folland's framework \cite{MR0494315}, and hence we could have given a single definition to cover both cases. However, the Fourier-analytic definition below will be convenient to work with. We also remark that these spaces are sometimes called \emph{Bessel potential spaces}.

\begin{definition}[Sobolev spaces in $\R^{n}$]\label{d:SobR} Let $\alpha > 0$, $1 \leq p \leq \infty$, and let $J_{\alpha} \colon \R^{n} \setminus \{0\} \to \R$ be the \emph{Bessel kernel of index $\alpha$}. Thus,
\begin{displaymath} \widehat{J_{\alpha}}(x) = (1 + 4\pi^{2}|x|^{2})^{-\alpha/2}, \qquad x \in \R^{n}. \end{displaymath}
It turns out, see \cite[Proposition 2, p. 132]{MR0290095}, that $J_{\alpha} \in L^{1}(\R^{n})$ with $\|J_{\alpha}\|_{1} = 1$. Hence $\|f \ast J_{\alpha}\|_{p} \leq \|f\|_{p}$ for any $f \in L^{p}(\R^{n})$. Now, we define
\begin{displaymath} S_{\alpha}^{p}(\R^{n}) := \{g \ast J_{\alpha} : g \in L^{p}(\R^{n})\}. \end{displaymath}
For $\alpha = 0$, we also define $S_{0}^{p}(\R^{n}) := L^{p}(\R^{n})$. For $f = g \ast J_{\alpha} \in S^{p}_{\alpha}(\R^{n})$, $\alpha > 0$, we define the norm $\|f\|_{p,\alpha} := \|g\|_{p}$.
\end{definition}

The double meaning for the notation $\|f\|_{p,\alpha}$ should cause no confusion, since the right interpretation will always be clear from the domain of $f$. For more information on the spaces $S^{p}_{\alpha}(\R^{n})$, see \cite[Section V.3.3]{MR0290095}. 

\begin{definition}[Vertical Sobolev spaces in $\He^{n}$]\label{def:verticalSobolev} Let $\alpha \geq 0$ and $1 \leq p < \infty$. For $f \colon \He^{n} \to \R$ and $z \in \R^{2n}$, define a function $f_{z} \colon \R \to \R$ by $f_{z}(t) := f(z,t)$. We write $f \in V^{p}_{\alpha}(\He^{n})$ if $f_{z} \in S^{p}_{\alpha}(\R)$ for a.e. $z \in \R^{2n}$, and
\begin{displaymath} \|f\|_{V^{p}_{\alpha}} := \left( \int_{\R^{2n}} \|f_{z}\|_{p,\alpha}^{p} \, dz \right)^{1/p} < \infty. \end{displaymath}
\end{definition}

Finally, we define the space $\bmo(\He^{n})$ which appeared in Corollary \ref{t:main}.

\begin{definition}[$\bmo(\He^{n})$] A function $f \in L^{1}_{loc}(\He^{n})$ is of \emph{bounded mean oscillation}, denoted
$f\in \mathrm{BMO}(\mathbb{H}^{n})$, if
\begin{displaymath}
\|f\|_{\bmo} := \sup_{B} \fint_B  |f(x)-f_B|\,dx <\infty,
\end{displaymath}
where the supremum is taken over all balls $B$ in $(\mathbb{H}^{n},d)$ and $f_B:= \fint_B f(x) \, dx$.
\end{definition}

\section{Vertical versus horizontal Sobolev spaces}\label{sec3}

This section contains the proof of Theorem \ref{main}, which we repeat below.
\begin{thm}\label{thm1} Let $1 < p < \infty$, $\alpha \geq 0$, and $f \in S^p_{2\alpha}(\He^n)$. Then $f \in V^{p}_{\alpha}(\He^n)$, and
\begin{equation}\label{form8} \|f\|_{V^{p}_{\alpha}} \lesssim_{\alpha,p} \|f\|_{p,2\alpha}. \end{equation}
\end{thm}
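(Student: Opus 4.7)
The plan is to reformulate \eqref{form8} as an $L^{p}$-boundedness statement for a single operator built from the joint functional calculus of the sub-Laplacian and the central derivative, and then to prove this boundedness by Stein's complex interpolation.

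First, by Remark \ref{sobolevRemark}(c), $\|f\|_{p,2\alpha} \sim \|(1 - \bigtriangleup)^{\alpha} f\|_{p}$; by Definition \ref{d:SobR} and Fubini, $\|f\|_{V^{p}_{\alpha}} = \|(1 - \partial_{t}^{2})^{\alpha/2} f\|_{L^{p}(\He^{n})}$, where $(1 - \partial_{t}^{2})^{\alpha/2}$ is the Fourier multiplier $(1 + 4\pi^{2}\tau^{2})^{\alpha/2}$ in the vertical variable $t$. Setting $g := (1 - \bigtriangleup)^{\alpha} f$ and invoking density of $\mathcal{D}$ (Remark \ref{sobolevRemark}(b)), the theorem is equivalent to the $L^{p}(\He^{n})$-boundedness of
\[
M_{\alpha} := (1 - \partial_{t}^{2})^{\alpha/2} (1 - \bigtriangleup)^{-\alpha}.
\]
Since $T = \partial_{t}$ is central in the Heisenberg Lie algebra, it commutes with every $X_{j}$ and hence with $\bigtriangleup$; thus $M_{\alpha}$ is well-defined via the joint functional calculus of the commuting positive self-adjoint operators $-T^{2}$ and $-\bigtriangleup$.

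The strategy for proving $M_{\alpha}$ bounded on $L^p$ is to apply Stein's analytic interpolation theorem to the family
\[
U_{z} := e^{z^{2}} (1 - \partial_{t}^{2})^{z/2} (1 - \bigtriangleup)^{-z}, \qquad 0 \leq \Re z \leq N,
\]
for an even integer $N > \alpha$. On the line $\Re z = 0$, $U_{iy}$ factors into two imaginary-power operators, each $L^{p}$-bounded with at most polynomial growth in $|y|$: $(1 - \bigtriangleup)^{-iy}$ by the Mihlin-H\"ormander spectral multiplier theorem for the sub-Laplacian on $\He^{n}$ (Hulanicki / Folland-Stein), and $(1-\partial_{t}^{2})^{iy/2}$ by the Euclidean Mihlin theorem on $\R$, transferred to $L^{p}(\He^{n}) = L^{p}(\R^{2n}; L^{p}(\R))$ fiberwise via Fubini. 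On the line $\Re z = N = 2m$, I expand $(1 - \partial_{t}^{2})^{m}$ as a polynomial in $T^{2}$; since $T$ commutes with $(1 - \bigtriangleup)^{-1}$, each term takes the form
\[
T^{2j} (1 - \bigtriangleup)^{-N} = \bigl[T (1 - \bigtriangleup)^{-1}\bigr]^{2j} (1 - \bigtriangleup)^{-(N - 2j)}, \qquad 0 \leq j \leq m.
\]
The operator $T (1 - \bigtriangleup)^{-1}$ is $L^{p}$-bounded by Folland's subelliptic estimates \cite{MR0494315} (combined with $T = [X_i,Y_i]$ and the $L^p$-bound for $X_iX_j (1-\bigtriangleup)^{-1}$); hence $U_{N}$ is $L^{p}$-bounded, and composing with the imaginary-power factor gives $U_{N + iy}$ $L^{p}$-bounded with polynomial growth in $|y|$. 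Interpolating yields $L^{p}$-boundedness of $U_{\alpha}$, equivalently of $M_{\alpha}$, for all $\alpha \in [0, N]$; as $N$ is arbitrary, this completes the argument.

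The main anticipated obstacle is the verification of Stein's admissibility for $\{U_{z}\}$: analyticity of $z \mapsto \langle U_{z} \varphi, \psi\rangle$ on a common dense subspace ($\varphi,\psi \in \mathcal{D}$) and sub-exponential growth of $\|U_{z}\|_{L^{p} \to L^{p}}$ on each vertical line. The growth control is ensured by the Gaussian prefactor $e^{z^{2}}$ combined with the polynomial-in-$|y|$ operator norms of the two families of imaginary powers; the existence of a common dense domain is routine once one works on $\mathcal{D}$. The genuinely non-trivial harmonic-analytic input is the Mihlin-H\"ormander theory for the Heisenberg sub-Laplacian, in particular the $L^{p}$-boundedness of $(1-\bigtriangleup)^{iy}$ and of the "second-order Riesz transform" $T (1 - \bigtriangleup)^{-1}$, both classical.
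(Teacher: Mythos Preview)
Your proposal is correct and follows essentially the same strategy as the paper: complex interpolation for the family $(1-\bigtriangleup)^{-\alpha}$ composed with a vertical fractional operator, with the imaginary endpoint handled by Mihlin-type multiplier bounds and the even-integer endpoint reduced to the $L^{p}$-boundedness of $T(1-\bigtriangleup)^{-1}$ (which the paper establishes directly as a Calder\'on--Zygmund operator, while you invoke Folland's second-order Riesz transform estimates). The only cosmetic differences are your use of the inhomogeneous symbol $(1-\partial_{t}^{2})^{\alpha/2}$ in place of the paper's $|T|^{\alpha}$, and of the Gaussian prefactor $e^{z^{2}}$ in place of the paper's explicit Phragm\'en--Lindel\"of estimate; neither affects the substance of the argument.
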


\begin{remark} To explain our strategy for proving Theorem \ref{thm1}, we first outline Folland's argument for the local embedding 
\begin{displaymath} S^{p}_{2\alpha}(\He^{n}) \subset S^{p}_{\alpha,\mathrm{loc}}(\R^{2n + 1}), \end{displaymath}
see \cite[Theorem (4.16)]{MR0494315}. Folland shows that if $\varphi \in \mathcal{D}$, then the map $f \mapsto T_{\varphi}(f) := \varphi f$ extends to a bounded operator between $S^{p}_{2\alpha}(\He^{n})$ and $S^{p}_{\alpha}(\R^{2n + 1})$. This is straightforward for $\alpha \in \N$, and Folland deduces the other cases from an interpolation theorem for linear operators between horizontal Sobolev spaces associated to sub-Laplacians spaces on two, possibly different, Carnot groups \cite[Theorem (4.7)]{MR0494315}; here, the groups are $\He^{n}$ and $\R^{2n + 1}$ equipped with the standard (sub-)Laplacians.

To obtain Theorem \ref{thm1}, we cannot afford to multiply $f$ with a test function. However, as in Folland's case, the continuous inclusion $S^{p}_{2\alpha}(\He^{n}) \subset V^{p}_{\alpha}(\He^{n})$ remains clear for $\alpha \in \N$. The main problem is, then, that $V^{p}_{\alpha}(\He^{n})$ is not a horizontal Sobolev space associated to any sub-Laplacian on $\He^{n}$ or $\R^{2n + 1}$, and therefore Folland's interpolation theorem is not directly applicable. It turns out that (complex) interpolation still works, but we need to write it down from "first principles".
\end{remark}

\subsection{The operators $|T|^{\alpha}$ and $(1 - \bigtriangleup)^{-\alpha}$}\label{s:operator} We begin by introducing certain operators $\Lambda_{\alpha}$, $\Rea \alpha \geq 0$. They are initially defined on Schwartz functions, and have the following form:
\begin{equation}\label{LambdaAlpha} \Lambda_{\alpha}(\varphi) = (1 - \bigtriangleup)^{-\alpha}(|T|^{\alpha}\varphi), \qquad \varphi \in \mathcal{S}, \: \Rea \alpha \geq 0. \end{equation}
So, $\Lambda_{\alpha}$ is the composition of the operators $|T|^{\alpha}$ and $(1 - \bigtriangleup)^{-\alpha}$. Here $|T|^{\alpha}$ is the following (Euclidean) Fourier multiplier:
\begin{displaymath} |T|^{\alpha}\varphi(z,t) = \left[(\xi,\tau) \mapsto (2\pi |\tau|)^{\alpha} \hat{\varphi}(\xi,\tau) \right]^{\check{}}(z,t), \qquad (z,t) \in \R^{2n} \times \R. \end{displaymath}
An equivalent definition would be
\begin{equation}\label{form5} |T|^{\alpha}\varphi(z,t) = [(-\bigtriangleup)^{\alpha/2}\varphi_{z}](t), \qquad (z,t) \in \R^{2n} \times \R, \end{equation}
where $(-\bigtriangleup)^{\alpha/2}$ is the standard fractional Laplacian on $\R$, i.e. the Fourier multiplier with symbol $(2\pi |\tau|)^{\alpha}$.

It is evident from Plancherel's theorem that
\begin{equation}\label{form6} |T|^{\alpha}(\mathcal{S}) \subset L^{2}(\He^n), \qquad \Rea \alpha \geq 0. \end{equation}

We then discuss the operators $(1 - \bigtriangleup)^{-\alpha}$ for $\alpha \in \C$.
First, for $f \in L^{p}(\He^n)$, $1 \leq p \leq \infty$, and $\Rea \alpha > 0$, we define $(1 - \bigtriangleup)^{-\alpha}f$ as the convolution $(1 - \bigtriangleup)^{-\alpha}f := f \ast B_{\alpha}$, where $B_{\alpha} \colon \He^n \to \C$ is the following \emph{Bessel kernel} (see \cite[\S 4.3.4]{MR3469687}):
\begin{displaymath} B_{\alpha}(x) := \frac{1}{\Gamma(\alpha)} \int_{0}^{\infty} s^{\alpha - 1}e^{-s}h_{s}(x) \, ds, \qquad x \in \He^n \setminus \{0\}. \end{displaymath}
Here $(x,s) \mapsto h_{s}(x)$, $(x,s) \in \He^n \times (0,\infty)$, is the heat kernel, see \cite[Theorem (3.1)]{MR0494315} or \cite[Theorem 4.2.7]{MR3469687}. The heat kernel is non-negative on $\He^{n} \times (0,\infty)$, and satisfies $h_{s}(x^{-1}) = h_{s}(x)$ for $x \in \He^{n}$ and $s > 0$. It follows that also 
\begin{equation}\label{BInverse} B_{\alpha}(x^{-1}) = B_{\alpha}(x), \qquad x \in \He^{n} \setminus \{0\}, \: \alpha > 0. \end{equation}
Moreover, $\|h_{s}\|_{1} = 1$ for $s > 0$, and consequently $B_{\alpha} \in L^{1}(\He^n)$ with
\begin{displaymath} \|B_{\alpha}\|_{1} \leq \frac{1}{|\Gamma(\alpha)|} \int_{0}^{\infty} s^{\Rea \alpha - 1}e^{-s}\|h_{s}\|_{1} \, ds = \frac{\Gamma(\Rea \alpha)}{|\Gamma(\alpha)|} < \infty, \qquad \Rea \alpha > 0. \end{displaymath}
Thus, by Young's inequality, see \cite[Proposition (1.10)]{MR0494315}, the convolution $f \ast B_{\alpha}$ is well-defined for $f \in L^{p}(\He^n)$, $1 \leq p \leq \infty$, and
\begin{equation}\label{form31} \|(1 - \bigtriangleup)^{-\alpha}f\|_{p} \leq \|B_{\alpha}\|_{1}\, \|f\|_{p}, \qquad 1 \leq p \leq \infty, \: \Rea \alpha > 0. \end{equation}
If the reader is not familiar with the operators $(1 - \bigtriangleup)^{-\alpha}$, it will appear rather confusing that we also mentioned the operators $(1 - \bigtriangleup_{p})^{-\alpha}$ in \eqref{OneMinusDelta}. For $\Rea \alpha > 0$, these operators coincide for all $1 < p < \infty$: the action of $(1 - \bigtriangleup_{p})^{-\alpha}$ on $L^{p}(\He^{n})$, initially defined as in \cite[\S 4.3.2]{MR3469687}, is in fact given by convolution with the kernel $B_{\alpha} \in L^{1}(\He^{n})$, see \cite[Corollary 4.3.11(ii)]{MR3469687}. For this reason, writing $(1 - \bigtriangleup)^{-\alpha}$ for $\Rea \alpha > 0$ is justified.

For the case $\Rea \alpha = 0$, the "abstract" definition of $(1 - \bigtriangleup_{p})^{-\alpha}$ no longer coincides with convolution by an $L^{1}(\He^n)$ function. This case is discussed extensively in \cite[\S 4.3.3]{MR3469687}. The conclusion relevant here that $(1 - \bigtriangleup_{p})^{-\alpha}$ is, for $\Rea \alpha = 0$ and $1 < p < \infty$, given by convolution with a tempered distribution (independent of $p$) which is smooth outside the origin and satisfies Calder\'on-Zygmund estimates, see \cite[(4.31)]{MR3469687}. So, in brief, $(1 - \bigtriangleup_{p})^{-\alpha}$ is a Calder\'on-Zygmund operator, and hence extends to a bounded operator on $L^{p}(\He^n)$ for $1 < p < \infty$. We will denote by $(1 - \bigtriangleup)^{-\alpha}$ this Calder\'on-Zygmund operator. In the special case $\alpha = 0$, we have, as expected, $(1 - \bigtriangleup)^{0} = \mathrm{Id}$; see \cite[Theorem 4.3.6, 1.(a)]{MR3469687}. Moreover, we record that by \cite[Proposition (4.3)]{MR0494315} (or see \cite[Lemma 4.3.8]{MR3469687}), we have the following estimate for $b > 0$ and $1 < p < \infty$:
\begin{equation}\label{LpBound2} \|(1 - \bigtriangleup)^{-\alpha}f\|_{p} \lesssim_{b,p} e^{\pi|\Im \alpha|}\|f\|_{p}, \qquad 1 < p < \infty, \: 0 \leq \Rea \alpha \leq b. \end{equation}
With these definitions in place, a key feature of the family of $(1 - \bigtriangleup)^{-\alpha}$, $\Rea \alpha \geq 0$, is the following, see \cite[Theorem (3.15)(iv)]{MR0494315}:
\begin{proposition}\label{analyticityProp} $f \in L^{p}(\He^n)$, $1 < p < \infty$, then
\begin{displaymath} \alpha \mapsto (1 - \bigtriangleup)^{-\alpha}f \end{displaymath}
is an analytic $L^{p}$-valued function on $\Rea \alpha > 0$ and a continuous $L^{p}$-valued function on $\Rea \alpha \geq 0$.
\end{proposition}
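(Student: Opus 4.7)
The plan is to treat the two regimes of the definition separately: analyticity on the open half-plane $\{\Rea \alpha > 0\}$, where $(1 - \bigtriangleup)^{-\alpha}$ is convolution with the $L^{1}$ kernel $B_{\alpha}$, and continuity at the imaginary boundary, where the operator is defined only via the abstract functional calculus and no kernel-level $L^{1}$ estimate is available.

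For analyticity on $\{\Rea \alpha > 0\}$, I would show directly that $\alpha \mapsto B_{\alpha}$ is an analytic $L^{1}(\He^{n})$-valued map, starting from the representation
\begin{displaymath}
B_{\alpha} = \frac{1}{\Gamma(\alpha)} \int_{0}^{\infty} s^{\alpha - 1} e^{-s} h_{s}\, ds.
\end{displaymath}
Fixing $\alpha_{0}$ with $\Rea \alpha_{0} > 0$ and any $0 < r < \Rea \alpha_{0}$, the integrand is dominated on the disk $D(\alpha_{0}, r)$ in $L^{1}(\He^{n})$-norm by an integrable function of $s$, using $\|h_{s}\|_{1} = 1$ and integrability of $(s^{\Rea \alpha_{0} - r - 1} + s^{\Rea \alpha_{0} + r - 1}) e^{-s}$ on $(0, \infty)$. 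A Morera-plus-Fubini argument, obtained by pairing against bounded linear functionals on $L^{1}(\He^{n})$, then gives analyticity of $B_{\alpha}$ as an $L^{1}$-valued map (noting that $1/\Gamma(\alpha)$ is entire), and analyticity of $\alpha \mapsto f \ast B_{\alpha}$ in $L^{p}$ follows by composing with the bounded linear map $g \mapsto f \ast g$ and invoking Young's inequality \eqref{form31}.

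The main obstacle is boundary continuity. My plan is a three-$\epsilon$ reduction exploiting the uniform operator bound \eqref{LpBound2}, which shows that on any bounded subset of the closed right half-plane the operators $(1 - \bigtriangleup)^{-\alpha}$ are uniformly bounded on $L^{p}(\He^{n})$. It therefore suffices to check continuity of $\alpha \mapsto (1 - \bigtriangleup)^{-\alpha} \varphi$ in $L^{p}$ for $\varphi$ in a dense subclass $\mathcal{V} \subset L^{p}(\He^{n})$. I would take $\mathcal{V} := (1 - \bigtriangleup)^{-N}(\mathcal{D})$ for a fixed positive integer $N$; density of $\mathcal{V}$ in $L^{p}$ is a short two-step approximation using density of $\mathcal{D}$ in $L^{p}$, boundedness of $(1 - \bigtriangleup)^{-N}$, and the identification $(1 - \bigtriangleup)^{-N}(L^{p}) = S^{p}_{2N}(\He^{n}) \supset \mathcal{D}$. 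For $\varphi = (1 - \bigtriangleup)^{-N} \psi \in \mathcal{V}$, the semigroup identity
\begin{displaymath}
(1 - \bigtriangleup)^{-\alpha} \varphi = (1 - \bigtriangleup)^{-(\alpha + N)} \psi, \qquad \Rea \alpha \geq 0,
\end{displaymath}
coming from the functional-calculus construction in \cite[\S 4.3]{MR3469687}, reduces the continuity at a boundary point $\alpha$ to analyticity of the right-hand side at $\alpha + N$, where $\Rea(\alpha + N) > 0$, and this was already established in the first step. The most delicate verification is the semigroup identity on the imaginary axis: one either reads it off from the abstract definition of $(1 - \bigtriangleup)^{-\alpha}$ in \cite{MR3469687}, or proves it by a limiting argument from $\Rea \alpha > 0$, where it reduces to convolution associativity $B_{\alpha_{1}} \ast B_{\alpha_{2}} = B_{\alpha_{1} + \alpha_{2}}$.
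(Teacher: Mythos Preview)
The paper does not give its own proof of this proposition; it is quoted directly from Folland \cite[Theorem (3.15)(iv)]{MR0494315}. Your proposal is therefore a self-contained reconstruction rather than a comparison target, and the outline is essentially correct: the $L^{1}$-analyticity of $\alpha\mapsto B_{\alpha}$ via Morera/Fubini followed by Young's inequality is the standard route to analyticity on $\{\Rea\alpha>0\}$, and the density-plus-semigroup reduction, using the uniform bound \eqref{LpBound2}, is a clean way to handle the boundary.

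One cautionary remark on the boundary step. Of the two options you list for verifying the semigroup identity
\[
(1-\bigtriangleup)^{-\alpha}\big((1-\bigtriangleup)^{-N}\psi\big)=(1-\bigtriangleup)^{-(\alpha+N)}\psi,\qquad \Rea\alpha=0,
\]
only the first (reading it off the functional calculus, e.g.\ \cite[Theorem (3.15)(iii)]{MR0494315} or \cite[Theorem 4.3.6]{MR3469687}) is safe. The ``limiting argument from $\Rea\alpha>0$'' is circular as stated: to pass to the limit on the left-hand side you would need to know that $\alpha\mapsto(1-\bigtriangleup)^{-\alpha}\varphi$ is continuous at the boundary for $\varphi=(1-\bigtriangleup)^{-N}\psi$, which is precisely the statement you are proving. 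Since the functional-calculus route is available and you cite it first, the argument goes through.
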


Finally, for $\Rea \alpha < 0$ and $f \in \mathcal{S}$, we define $(1 - \bigtriangleup)^{-\alpha}f$ as in \cite[\S 3]{MR0494315} or \cite[\S 4.3.2]{MR3469687}; for these parameters $(1 - \bigtriangleup_{p})^{-\alpha}$ does not extend to a bounded operator on $L^{p}(\He^{n})$. However, we will only use the definition for $f \in \mathcal{S}$, and then \nref{i} below justifies the shorthand notation $(1 - \bigtriangleup)^{-\alpha}f$. We record the following facts:
\begin{remark}\label{fracLaplaceFacts}
\begin{itemize}
\item[\phantomsection\label{i}(i)] For all $\alpha \in \C$, the operators $f \mapsto (1 - \bigtriangleup)^{-\alpha}f$ are well-defined on $\mathcal{S}$ and preserve $\mathcal{S}$, see \cite[Corollary 4.3.16]{MR3469687}.
\item[(ii)\phantomsection \label{ii}] If $f \in \mathcal{S}$ and $\alpha \in \C$, then $(1 - \bigtriangleup)^{-\alpha}[(1 - \bigtriangleup)^{\alpha}f] = f$. This follows from \cite[Theorem 4.3.6(1a)]{MR3469687}.
\item[(iii)\phantomsection \label{iii}] Let $\Rea \alpha > 0$, $1 \leq p \leq \infty$, $f \in L^{p}(\He^n)$, and $g \in L^{q}(\He^n)$, where $1/p + 1/q = 1$ (for $p = 1$, $q = \infty$). Then,
\begin{displaymath} \int_{\He^n} [(1 - \bigtriangleup)^{-\alpha}f](x)g(x) \, dx = \int_{\He^n} f(x)[(1 - \bigtriangleup)^{-\alpha}g](x) \, dx. \end{displaymath}
This follows simply by recalling that $(1 - \bigtriangleup)^{-\alpha}$ is given by convolution with $B_{\alpha} \in L^{1}(\He^n)$ for $\Rea \alpha > 0$, and recalling \eqref{BInverse}.
\end{itemize}
\end{remark}

After these considerations, the meaning of the definition stated in \eqref{LambdaAlpha} is clear: for $\Rea \alpha \geq 0$ and $\varphi \in \mathcal{S}$, we recall from \eqref{form6} that $g = |T|^{\alpha}\varphi \in L^{2}(\He^n)$, and then $\Lambda_{\alpha}(\varphi) = (1 - \bigtriangleup)^{-\alpha}g \in L^{2}(\He^n)$. For $\Rea \alpha \geq 0$, we will also consider the "formal adjoint" of $\Lambda_{\alpha}$, namely
\begin{displaymath} \Lambda_{\alpha}^{\ast}\psi := |T|^{\alpha} [(1 - \bigtriangleup)^{-\alpha}\psi], \qquad \psi \in \mathcal{S}. \end{displaymath}
The object on the right is well-defined, and in $L^{2}(\He^n)$, because $(1 - \bigtriangleup)^{-\alpha}\psi \in \mathcal{S}$ (recall \nref{i} above). By the statement that $\Lambda^{\ast}_{\alpha}$ is the formal adjoint of $\Lambda_{\alpha}$, we mean that
\begin{equation}\label{adjoint} \int_{\He^n} (\Lambda_{\alpha}\varphi)\psi = \int_{\He^n} \varphi(\Lambda_{\alpha}^{\ast}\psi), \qquad \varphi,\psi \in \mathcal{S}, \: \Rea \alpha > 0. \end{equation}
This equation (for $\Rea \alpha > 0$) easily follows from \nref{i} and \nref{iii} above, and Plancherel.

\begin{remark} The equation \eqref{adjoint} is an understatement in at least two ways. First, it would also extend to the case $\Rea \alpha = 0$, since $(1 - \bigtriangleup)^{-\alpha}$ is self-adjoint on $L^{2}(\He^n)$, see \cite[Theorem (3.15)(v)]{MR0494315}. Second, using the following formula for the (Euclidean) Fourier transform
\begin{displaymath} \widehat{f \ast g}(\xi,\tau) = \iint e^{-2\pi i(\xi,\tau) \cdot (z,t)} \hat{f}\left(\xi - \tfrac{1}{2}i \tau z, \tau\right)g(z,t) \, dz \, dt, \qquad f,g \in L^{1}(\He^n),\end{displaymath}
cf.\ \cite[(3.13)]{MR1101262},
it would be easy to justify that the operators $|T|^{\alpha}$ and $(1 - \bigtriangleup)^{-\alpha}$ commute on $\mathcal{S}$ at least when $\Rea \alpha > 0$, and hence actually $\Lambda_{\alpha} = \Lambda_{\alpha}^{\ast}$ for $\Rea \alpha > 0$.  \end{remark}

Here is finally the main result of the section:
\begin{thm}\label{mainBoundedness} Let $\alpha \geq 0$, $1 < p < \infty$, and $\varphi \in \mathcal{S}$. Then, $\Lambda_{\alpha}\varphi,\Lambda_{\alpha}^{\ast}\varphi \in L^{p}(\He^n)$, and
\begin{displaymath} \|\Lambda_{\alpha}\varphi\|_{p} + \|\Lambda_{\alpha}^{\ast}\varphi\|_{p} \lesssim_{\alpha,p} \|\varphi\|_{p}. \end{displaymath}
\end{thm}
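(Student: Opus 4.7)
The structural fact I will exploit throughout is that $T$ is central in the Heisenberg Lie algebra, so on Schwartz functions $|T|^{\alpha}$, $(1-\bigtriangleup)^{-\beta}$ and every horizontal derivative $X_{j}$ mutually commute; in particular $\Lambda_{\alpha} = \Lambda_{\alpha}^{\ast}$ on $\mathcal{S}$ (as already observed in the remark preceding the theorem statement), so it suffices to estimate $\Lambda_{\alpha}$. Following the hint in that same remark, I will set up a Stein-type complex interpolation of $\{\Lambda_{\alpha}\}$ on the strip $\{\alpha \in \mathbb{C} : 0 \leq \Rea \alpha \leq N\}$ for a large positive even integer $N$.

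At the left endpoint $\Rea \alpha = 0$, the operator $|T|^{i\beta}$ is the one-variable Fourier multiplier in the $\tau$-variable with symbol $(2\pi|\tau|)^{i\beta}$, which satisfies the Mihlin--H\"ormander condition uniformly with constants polynomial in $|\beta|$; hence $|T|^{i\beta}$ is $L^{p}(\R)$-bounded and, by Fubini, $L^{p}(\He^{n})$-bounded. Combined with \eqref{LpBound2}, this gives $\|\Lambda_{i\beta}\varphi\|_{p} \lesssim_{p} (1+|\beta|)^{C} e^{\pi|\beta|}\|\varphi\|_{p}$. At the right endpoint $\Rea \alpha = N$, the parity of $N$ forces $|T|^{N} = \pm T^{N}$ on $\mathcal{S}$, and the commutator identity $T = [X_{i},Y_{i}]$ expresses $T^{N}$ as a linear combination of horizontal derivatives of order $2N$. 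Commutativity then reorganizes
\begin{displaymath} \Lambda_{N+i\beta}\varphi \;=\; (1-\bigtriangleup)^{-i\beta}\,|T|^{i\beta}\,\bigl[\pm T^{N}(1-\bigtriangleup)^{-N}\varphi\bigr], \end{displaymath}
and Remark \ref{sobolevRemark}(a),(c) tells me that $(1-\bigtriangleup)^{-N} \colon L^{p}(\He^{n}) \to S^{p}_{2N}(\He^{n})$ is continuous and that every order-$2N$ horizontal derivative is $L^{p}$-bounded on $S^{p}_{2N}(\He^{n})$, giving $\|T^{N}(1-\bigtriangleup)^{-N}\varphi\|_{p} \lesssim_{N,p} \|\varphi\|_{p}$. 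Combined with the $\Rea \alpha = 0$ estimates, $\|\Lambda_{N+i\beta}\varphi\|_{p} \lesssim_{N,p}(1+|\beta|)^{C} e^{\pi|\beta|}\|\varphi\|_{p}$.

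For the interpolation step, fix $\alpha_{0} \in [0,N]$, $\varphi \in \mathcal{S}$, $\psi \in \mathcal{D}$, and study $F(\alpha) := \int_{\He^{n}}(\Lambda_{\alpha}\varphi)\psi$. Proposition \ref{analyticityProp} together with the Plancherel description of $|T|^{\alpha}\varphi$ shows that $F$ is holomorphic on the open strip, continuous on its closure, and uniformly bounded in the interior (via the crude $|F(\alpha)| \leq \|\Lambda_{\alpha}\varphi\|_{2}\|\psi\|_{2}$ combined with the trivial bound $\|(1-\bigtriangleup)^{-\alpha}\|_{L^{2}\to L^{2}} \leq 1$ for $\Rea \alpha \geq 0$). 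Multiplying by the Gaussian factor $e^{((\alpha-\alpha_{0})/N)^{2}}$ kills the $e^{\pi|\beta|}$ boundary growth from the previous two steps, and the three-lines lemma applied to the damped function produces $|F(\alpha_{0})| \lesssim_{\alpha_{0},p,N} \|\varphi\|_{p}\|\psi\|_{p'}$; density of $\mathcal{D}$ in $L^{p'}$ and duality then conclude $\|\Lambda_{\alpha_{0}}\varphi\|_{p} \lesssim \|\varphi\|_{p}$, and letting $N$ be any even integer exceeding $\alpha_{0}$ settles the theorem.

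The main obstacle I anticipate is the careful calibration of this interpolation: the bound $e^{\pi|\Im \alpha|}$ of \eqref{LpBound2} is right at the edge of what the three-lines lemma tolerates in a unit-width strip, so the strip width $N$ and the Gaussian damping exponent must be tuned so that the damped function is genuinely bounded on both vertical boundaries while $F$ itself still lies in the Phragm\'en--Lindel\"of growth regime inside. A secondary subtlety is verifying, strictly from first principles, the holomorphy of $\alpha \mapsto F(\alpha)$ on the open strip, but this is routine given Proposition \ref{analyticityProp} and the Fourier-analytic representation of $|T|^{\alpha}\varphi$.
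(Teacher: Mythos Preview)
Your proposal is correct and follows essentially the same architecture as the paper: complex interpolation of $\alpha \mapsto \Lambda_{\alpha}$ on a strip $0 \leq \Rea \alpha \leq N$ with $N$ an even positive integer, with the $\Rea \alpha = 0$ endpoint handled via the one-dimensional H\"ormander--Mihlin theorem plus \eqref{LpBound2}, and the $\Rea \alpha = N$ endpoint reduced to the $L^{p}$-boundedness of $T^{N}(1-\bigtriangleup)^{-N}$.

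The execution differs from the paper in two minor (and arguably streamlining) ways. First, at the right endpoint the paper proves $\|(1-\bigtriangleup)^{-1}(Tg)\|_{q} \lesssim \|g\|_{q}$ by verifying the Calder\'on--Zygmund size, smoothness and cancellation conditions for the kernel $TB_{1}$ and iterating; you instead invoke the norm equivalence $S^{p}_{2N}(\He^{n}) = W^{2N,p}(\He^{n})$ from Remark \ref{sobolevRemark}(a),(c), which is legitimate and shorter since Folland's result is already available. Second, for the interpolation step the paper applies a Phragm\'en--Lindel\"of lemma (Lemma \ref{PL}) that tolerates the $e^{\pi|\Im \alpha|}$ boundary growth directly, whereas you kill that growth with a Gaussian damping factor and appeal to the three-lines lemma; both are standard, and your interior $L^{2}$ bound via the spectral contraction $\|(1-\bigtriangleup)^{-\alpha}\|_{L^{2}\to L^{2}} \leq 1$ is in fact sharper than the paper's a priori estimate \eqref{form24}.
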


\begin{remark} Theorem \ref{mainBoundedness} is connected with the work of Strichartz \cite{MR1101262} and M\"uller-Ricci-Stein \cite{MR1312498, MRS} on joint spectral multipliers of $\bigtriangleup$ and $iT$. See for example \cite[Section 6]{MR1312498} or \cite[Theorem 2.3]{MRS}, and in particular \cite[Example 3.3]{MRS}, which states that the operators $\Lambda^{\alpha}$ are bounded on $L^{p}$ for $\Rea \alpha = 0$. To the best of our knowledge, Theorem \ref{mainBoundedness} for $\alpha > 0$ is not explicitly contained in \cite{MR1101262, MR1312498, MRS}. On the other hand, there is little doubt that the techniques in those papers would give an alternative proof. It seemed, however, that Theorem \ref{mainBoundedness} is rather more elementary than the level of the most general results in \cite{MR1101262, MR1312498, MRS}, and therefore it was clearest to give a self-contained argument. \end{remark}

We are mostly interested in the following corollary of Theorem \ref{mainBoundedness}:

\begin{cor}\label{mainCor} Let $\alpha \geq 0$, $1 < p < \infty$, and $\varphi \in \mathcal{S}$. Then,
\begin{equation}\label{form32} \||T|^{\alpha}\varphi\|_{p} \lesssim_{\alpha,p} \|(1 - \bigtriangleup)^{\alpha}\varphi\|_{p}. \end{equation}
\end{cor}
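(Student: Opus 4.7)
The plan is to derive the corollary directly from Theorem \ref{mainBoundedness} by inverting the relation between $|T|^{\alpha}$ and $(1-\bigtriangleup)^{-\alpha}$ on the Schwartz class. The key observation is that $\Lambda_{\alpha}^{\ast}$, as defined in the section, is precisely the composition $|T|^{\alpha}(1-\bigtriangleup)^{-\alpha}$, so inserting $(1-\bigtriangleup)^{-\alpha}(1-\bigtriangleup)^{\alpha} = \mathrm{Id}$ on the Schwartz class turns $|T|^{\alpha}$ into $\Lambda_{\alpha}^{\ast}$ applied to $(1-\bigtriangleup)^{\alpha}\varphi$.

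More concretely, the first step is to check that $\psi := (1-\bigtriangleup)^{\alpha}\varphi$ is itself a Schwartz function. This is immediate from Remark \ref{fracLaplaceFacts}\nref{i}, which asserts that $(1-\bigtriangleup)^{\beta}$ preserves $\mathcal{S}$ for every $\beta \in \C$ (in particular for $\beta = \alpha \geq 0$). The second step is to apply Remark \ref{fracLaplaceFacts}\nref{ii} with the roles of $\alpha$ and $-\alpha$ reversed, which yields the identity
\begin{displaymath}
\varphi = (1-\bigtriangleup)^{-\alpha}\bigl[(1-\bigtriangleup)^{\alpha}\varphi\bigr] = (1-\bigtriangleup)^{-\alpha}\psi.
\end{displaymath}

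The third step is to apply $|T|^{\alpha}$ to both sides and recognise the right-hand side as $\Lambda_{\alpha}^{\ast}\psi$:
\begin{displaymath}
|T|^{\alpha}\varphi = |T|^{\alpha}\bigl[(1-\bigtriangleup)^{-\alpha}\psi\bigr] = \Lambda_{\alpha}^{\ast}\psi.
\end{displaymath}
Since $\psi \in \mathcal{S}$, Theorem \ref{mainBoundedness} is directly applicable and gives $\|\Lambda_{\alpha}^{\ast}\psi\|_{p} \lesssim_{\alpha,p} \|\psi\|_{p}$. Unwinding the definition of $\psi$ then delivers exactly \eqref{form32}.

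There is really no serious obstacle here; the only point requiring care is verifying that $\psi = (1-\bigtriangleup)^{\alpha}\varphi$ lies in $\mathcal{S}$ so that Theorem \ref{mainBoundedness} can be invoked legitimately, and that the composition identity $\Lambda_{\alpha}^{\ast} = |T|^{\alpha}(1-\bigtriangleup)^{-\alpha}$ is interpreted correctly as stated in Section \ref{s:operator}. Both points are covered by Remark \ref{fracLaplaceFacts} and the definitions already recorded.
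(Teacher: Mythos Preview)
Your proposal is correct and follows essentially the same argument as the paper: write $|T|^{\alpha}\varphi = |T|^{\alpha}\bigl((1-\bigtriangleup)^{-\alpha}[(1-\bigtriangleup)^{\alpha}\varphi]\bigr) = \Lambda_{\alpha}^{\ast}[(1-\bigtriangleup)^{\alpha}\varphi]$ via Remark \ref{fracLaplaceFacts}\nref{i}--\nref{ii}, and then apply Theorem \ref{mainBoundedness}. There is no substantive difference in approach.
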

\begin{proof} Fix $0 < \alpha \leq 1$. We simply use the $L^{p}$-boundedness of the operator $\Lambda_{\alpha}^{\ast}$. For $\varphi \in \mathcal{S}$, using \nref{ii} in Remark \ref{fracLaplaceFacts}, we write
\begin{displaymath} |T|^{\alpha}\varphi = |T|^{\alpha}\big( (1 - \bigtriangleup)^{-\alpha}[(1 - \bigtriangleup)^{\alpha}\varphi] \big) = \Lambda^{\ast}_{\alpha}[(1 - \bigtriangleup)^{\alpha}\varphi], \end{displaymath}
recalling also from Remark \ref{fracLaplaceFacts}\nref{i} that $(1 - \bigtriangleup)^{\alpha}\varphi \in \mathcal{S}$, so the expression on the right is well-defined. Now \eqref{form32} follows from Theorem \ref{mainBoundedness}. \end{proof}

Theorem \ref{thm1} is now an easy consequence of the result above:
\begin{proof}[Proof of Theorem \ref{thm1}] Fix $\alpha \geq 0$ and $1 < p < \infty$. We start by establishing the inequality \eqref{form8} for $\varphi \in \mathcal{S}$. First, by \cite[Lemma 2, p. 133]{MR0290095}, we have
\begin{displaymath} \|\varphi_{z}\|_{p,\alpha} \lesssim_{\alpha,p} \|\varphi_{z}\|_{p} + \|(-\bigtriangleup)^{\alpha/2}\varphi_{z}\|_{p}, \qquad z \in \R^{2n}, \end{displaymath}
where $(-\bigtriangleup)^{\alpha/2}$ refers to Fourier multiplication on $\R$ by $(2\pi |\tau|)^{\alpha}$. Consequently, by Fubini's theorem, and recalling that the symbol of $|T|^{\alpha}$ is also $(2\pi |\tau|)^{\alpha}$, we infer that
\begin{displaymath} \|\varphi\|_{V_{\alpha}^{p}}^{p} \lesssim_{\alpha,p} \|\varphi\|_{p}^{p} + \||T|^{\alpha}\varphi\|_{p}^{p} \lesssim \|\varphi\|_{p}^{p} + \|(1 - \bigtriangleup)^{\alpha}\varphi\|_{p}^{p}, \end{displaymath}
where the second inequality follows from Corollary \ref{mainCor}. But, by the Remark \ref{sobolevRemark}(c),
\begin{displaymath} \|\varphi\|_{p}^{p} + \|(1 - \bigtriangleup)^{\alpha}\varphi\|_{p}^{p} \sim_{p} \|\varphi\|_{p,2\alpha}^{p}, \qquad 1 < p < \infty. \end{displaymath}
Thus \eqref{form8} holds for $\varphi \in \mathcal{S}$.

We then consider a general function $f \in S^p_{2\alpha}(\He^n)$. We choose a sequence $\{f^{j}\}_{j \in \N} \subset \mathcal{D}$ such that $\|f^{j} - f\|_{p,2\alpha} \to 0$ as $j \to \infty$ and $\|f_{j}\|_{p,2\alpha} \leq 2\|f\|_{p,2\alpha}$ for all $j \in \N$; this is possible as we discussed in Remark \ref{sobolevRemark}(b). Then $f^{j}_{z} \in \mathcal{S}(\R) \subset S^{p}_{\alpha}(\R)$ for every $j \in \N$ and $z \in \R^{2n}$ fixed, so we may find $g^{j}_{z} \in L^{p}(\R)$ with
\begin{equation}\label{form51} f^{j}_{z} = g^{j}_{z} \ast J_{\alpha}. \end{equation}
If we define $g^{j}(z,t) := g^{j}_{z}(t)$, we find from the $\mathcal{S}$-version of \eqref{form8} that
\begin{displaymath} \|g^{j}\|_{p} =\|f^j\|_{V^p_{\alpha}} \lesssim_{\alpha,p} \|f^{j}\|_{{p,2\alpha}} \lesssim \|f\|_{p,2\alpha}, \qquad j \in \N. \end{displaymath}
Since $1 < p < \infty$, we may therefore assume, after passing to a subsequence, that $g^{j}$ converges weakly to some $g \in L^{p}(\He^n)$.

In this proof only, we consider the \emph{vertical convolution}
\begin{displaymath} \varphi \ast^{\textup{v}} \psi(z,t) := \int_{\R} \varphi(z,t - r)\psi(r) \, dr, \qquad \varphi \in L^{p}(\He^n), \: \psi \in L^{1}(\R). \end{displaymath}
By Fubini's theorem, and Young's inequality on $\R$, we have $\varphi \ast^{\textup{v}} \psi \in L^{p}(\He^n)$ with $\|\varphi \ast^{\textup{v}} \psi\|_{L^{p}(\He^n)} \lesssim \|\varphi\|_{L^{p}(\He^n)}\|\psi\|_{L^{1}(\R)}$. In this notation, \eqref{form51} can be rewritten as
\begin{displaymath} f^{j} = g^{j} \ast^{\textup{v}} J_{\alpha}, \end{displaymath}
and we now claim that $f = g \ast^{\textup{v}} J_{\alpha}$. First, using the convergences
\begin{displaymath}\label{form52} g^{j} \ast^{\textup{v}} J_{\alpha} = f_{j} \to f \text{ in } L^{p}(\He^n) \quad \text{and} \quad g^{j} \rightharpoonup g \text{ in } L^{p}(\He^{n}),  \end{displaymath}
we can write
\begin{equation}\label{form52} \int_{\He^{n}} f\varphi = \lim_{j \to \infty} \int_{\He^{n}} (g^{j} \ast^{\textup{v}} J_{\alpha})\varphi = \lim_{j \to \infty} \int_{\He^{n}} g^{j}(J_{\alpha} \ast^{\textup{v}} \varphi) = \int_{\He^{n}} g(J_{\alpha} \ast^{\textup{v}} \varphi) \end{equation}
for any $\varphi \in \mathcal{D}$, using that $J_{\alpha} \ast^{\textup{v}} \varphi \in L^{q}(\He^{n})$ with $1/p + 1/q = 1$.
Finally, since $g \in L^{p}(\He^n)$, it is easy to justify that
\begin{displaymath} \int_{\He^n} g(\varphi \ast^{\textup{v}} J_{\alpha}) = \int_{\He^n} (g \ast^{\textup{v}} J_{\alpha})\varphi, \qquad \varphi \in \mathcal{D},  \end{displaymath}
and this gives $f = g \ast^{\textup{v}} J_{\alpha}$ a.e. in combination with \eqref{form52}. Since $g_{z} \in L^{p}(\R)$ for a.e. $z \in \R^{2n}$, this immediately gives $f \in V^{p}_{\alpha}(\He^{n})$, as desired. \end{proof}

\subsection{Proof of the main estimate}\label{s:proofOfThm} The main task will be to prove the following estimate for $\alpha \geq 0$ and $1 < p < \infty$:
\begin{equation}\label{form53} \left| \int_{\He^n} (\Lambda_{\alpha}\varphi)(x) \psi(x) \, dx \right| \lesssim_{\alpha,p} \|\varphi\|_{p}\|\psi\|_{q}, \qquad \varphi, \psi \in \mathcal{S}, \: \tfrac{1}{p} + \tfrac{1}{q} = 1. \end{equation}
For $\alpha > 0$, the formal adjointness of the operators $\Lambda_{\alpha}$ and $\Lambda_{\alpha}^{\ast}$, recall \eqref{adjoint}, then implies the same estimate for $\Lambda_{\alpha}^{\ast}$. This shows that both $\Lambda_{\alpha}$ and $\Lambda_{\alpha}^{\ast}$ are bounded on $L^{p}(\He^n)$ for $\Rea \alpha > 0$, and the case $\alpha = 0$ is trivial, as $\Lambda_{\alpha} = \Lambda^{\ast}_{\alpha} = \mathrm{Id}$.

Omitting all (admittely very standard) details, the proof of \eqref{form53} can be condensed in the following three steps.
\begin{itemize}
\item[(a)] $\Lambda_{\alpha}$ is bounded on $L^{p}(\He^n)$ for all $\Rea \alpha = 0$ (and not just $\alpha = 0$),
\item[(b)] If $\alpha = A \in 2\N$, then $\Lambda_{A}$ is (almost) a Calder\'on-Zygmund operator, hence bounded on $L^{p}(\He^n)$. The $L^{p}$-boundedness actually holds for all $\Rea \alpha = A$.
\item[(c)] The operator family $\alpha \mapsto \Lambda_{\alpha}$ is analytic (in a suitable sense) for $\Rea \alpha > 0$ and continuous on $\Rea \alpha \geq 0$, so complex interpolation gives the $L^{p}(\He^n)$ boundedness of $\Lambda_{\alpha}$ for all $0 \leq \alpha \leq A$. Since $A \in 2\N$ is arbitrary, \eqref{form53} follows.
\end{itemize}

We then begin executing the steps (a)-(c) carefully. We will need the following "vertical Hilbert transform":
\begin{lemma} For $\varphi \in \mathcal{S}$, define the operator
\begin{displaymath} H\varphi(z,t) := \left[(\xi,\tau) \mapsto \frac{|\tau|}{\tau} \hat{\varphi}(\xi,\tau) \right]^{\check{}}(z,t). \end{displaymath}
Then $H$ extends to a bounded operator on $L^{p}(\He^n)$, for $1 < p < \infty$.
\end{lemma}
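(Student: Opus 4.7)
The plan is to recognise $H$ as the classical one-dimensional Hilbert transform applied only in the vertical variable $t$, and then invoke Fubini together with the standard $L^p(\R)$-boundedness of the Hilbert transform for $1 < p < \infty$.

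First I would observe that since the multiplier $(\xi,\tau) \mapsto |\tau|/\tau$ depends only on $\tau$, applying the Euclidean Fourier transform in $\R^{2n+1}$ and then restoring it coincides with applying the Fourier transform only in the $t$ variable (for fixed $z \in \R^{2n}$), multiplying by $\operatorname{sgn}(\tau)$, and inverting in $t$. In other words, for $\varphi \in \mathcal{S}$,
\begin{displaymath}
H\varphi(z,t) \;=\; \mathcal{H}[\varphi_z](t), \qquad (z,t) \in \R^{2n}\times\R,
\end{displaymath}
where $\varphi_z(t) := \varphi(z,t)$ and $\mathcal{H}$ denotes the one-dimensional Hilbert transform on $\R$, i.e.\ the Fourier multiplier on $\R$ with symbol $\operatorname{sgn}(\tau)$. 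This identity is justified by Fubini in the Fourier integral for Schwartz functions.

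Next I would use the classical fact that $\mathcal{H}$ is bounded on $L^p(\R)$ for $1 < p < \infty$, with some constant $C_p$. Combined with Fubini's theorem, this yields
\begin{displaymath}
\|H\varphi\|_{L^p(\He^n)}^p \;=\; \int_{\R^{2n}} \|\mathcal{H}[\varphi_z]\|_{L^p(\R)}^p \, dz \;\leq\; C_p^p \int_{\R^{2n}} \|\varphi_z\|_{L^p(\R)}^p \, dz \;=\; C_p^p\,\|\varphi\|_{L^p(\He^n)}^p
\end{displaymath}
for every $\varphi \in \mathcal{S}$. Since $\mathcal{S}$ is dense in $L^p(\He^n)$ for $1 < p < \infty$, the operator $H$ extends by continuity to a bounded linear operator on $L^p(\He^n)$ with norm at most $C_p$.

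There is no real obstacle here: the only subtlety is verifying the factorisation $H\varphi(z,\cdot) = \mathcal{H}[\varphi_z]$ carefully (which requires only Fubini applied to the Schwartz Fourier integral), after which the $L^p$ bound is an immediate consequence of the one-dimensional theory. Note that the same scheme would allow one to replace $\operatorname{sgn}(\tau)$ by any bounded $L^p(\R)$ Fourier multiplier, which will be useful later in the paper when analysing $|T|^\alpha$ on $\Rea\alpha = 0$.
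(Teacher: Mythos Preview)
Your proof is correct and follows exactly the paper's approach: the paper's proof is the single sentence ``Note that $(H\varphi)_{z}$ is, up to a constant, the usual Hilbert transform of $\varphi_{z}$ and use Fubini's theorem,'' and you have simply written out the details of this argument.
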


\begin{proof} Note that $(H\varphi)_{z}$ is, up to a constant, the usual Hilbert transform of $\varphi_{z}$ and use Fubini's theorem. \end{proof}

The following lemma states that the operator $|T|^{\alpha}$ maps all sufficiently smooth functions into $L^{p}(\He^n)$:

\begin{lemma}\label{LpLemma} Let $\Rea \alpha \geq 0$ and $f \in \mathcal{S}$. Then,
\begin{equation}\label{form20} \||T|^{\alpha}f\|_{p} \lesssim_{p} (1 + |\Im \alpha|)[\|T^{\floor{\Rea \alpha}}f\|_{p} + \|T^{\ceil{\Rea \alpha}}f\|_{p}], \qquad 1 < p < \infty. \end{equation}
For $p = 2$, the term $|\Im \alpha|$ can be omitted.
\end{lemma}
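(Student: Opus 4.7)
The plan is to exploit that $|T|^\alpha$ acts only in the vertical variable, reducing the claim via Fubini to a one-variable problem for the usual fractional Laplacian on $\R$, and then handling that one-variable problem by Fourier-multiplier methods.

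By \eqref{form5}, Fubini gives
\begin{equation*}
\||T|^\alpha f\|_{L^p(\He^n)}^p = \int_{\R^{2n}}\|(-\bigtriangleup)^{\alpha/2} f_z\|_{L^p(\R)}^p\,dz,
\end{equation*}
and analogously for $T^k f$, so it suffices to establish
\begin{equation*}
\|(-\bigtriangleup)^{\alpha/2} g\|_{L^p(\R)} \lesssim_p (1+|\Im\alpha|)\bigl[\|g^{(\floor{\Rea\alpha})}\|_p + \|g^{(\ceil{\Rea\alpha})}\|_p\bigr]
\end{equation*}
for $g\in\mathcal{S}(\R)$. The case $p=2$ is immediate from Plancherel once one uses the pointwise inequality $y^{2\beta}\leq y^{2\floor{\beta}}+y^{2\ceil{\beta}}$ (obtained by treating $y\leq 1$ and $y\geq 1$ separately), and no $|\Im\alpha|$ factor arises.

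For $1<p<\infty$, write $s=\Im\alpha$ and factor $(-\bigtriangleup)^{\alpha/2}=M_s\circ(-\bigtriangleup)^{\Rea\alpha/2}$, where $M_s$ is the Fourier multiplier with symbol $m_s(\tau):=(2\pi|\tau|)^{is}$. A direct computation gives $|m_s(\tau)|=1$ and $\tau m_s'(\tau)=is\cdot m_s(\tau)$, so the one-dimensional Mihlin criterion yields $\|M_s\|_{L^p\to L^p}\lesssim_p 1+|s|$. It remains to bound $\|(-\bigtriangleup)^{\beta/2}g\|_p$ for real $\beta=\Rea\alpha\geq 0$: setting $k=\floor{\beta}$ and $\gamma=\beta-k\in[0,1)$, I decompose $(-\bigtriangleup)^{\beta/2}=(-\bigtriangleup)^{\gamma/2}\circ(-\bigtriangleup)^{k/2}$. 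The integer factor reduces, through at most one application of the Hilbert transform $H$ on $\R$, to $g^{(k)}$, giving $\|(-\bigtriangleup)^{k/2}g\|_p\lesssim_p\|g^{(k)}\|_p$. For the fractional factor, the auxiliary symbol $(2\pi|\tau|)^\gamma/(1+2\pi|\tau|)$ is easily checked to be Mihlin-bounded uniformly in $\gamma\in[0,1]$, hence
\begin{equation*}
\|(-\bigtriangleup)^{\gamma/2} h\|_p \lesssim_p \|h\|_p + \|(-\bigtriangleup)^{1/2}h\|_p \lesssim_p \|h\|_p+\|h'\|_p.
\end{equation*}
Applying this with $h=(-\bigtriangleup)^{k/2}g$ yields $\|(-\bigtriangleup)^{\beta/2}g\|_p \lesssim_p \|g^{(k)}\|_p + \|g^{(k+1)}\|_p$, which closes the argument.

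The delicate point is producing the \emph{linear} factor $1+|\Im\alpha|$: a direct complex-interpolation argument for the family $\alpha\mapsto|T|^\alpha$ would give only exponential growth in $|\Im\alpha|$, whereas the one-dimensional Mihlin criterion, applied once the elementary identity $|\tau m_s'(\tau)|=|\Im\alpha|$ is in hand, delivers the sharper linear bound required by the lemma.
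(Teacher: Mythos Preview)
Your proof is correct and follows essentially the same route as the paper's: reduce via Fubini to a one-variable multiplier problem, peel off the purely imaginary part of the exponent using the Mihlin/Marcinkiewicz theorem (which yields the linear factor $1+|\Im\alpha|$), and then control the remaining real fractional power by comparing with the adjacent integer powers. The only cosmetic difference is that where the paper introduces the Bessel-type operator $(1+|T|)^{\beta}$ and invokes \cite[Lemmas 2--3, pp.~133--136]{MR0290095}, you instead verify the Mihlin hypothesis directly for the auxiliary symbol $(2\pi|\tau|)^{\gamma}/(1+2\pi|\tau|)$; these are the same estimate in slightly different packaging.
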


\begin{proof} We write $m^{\alpha}(\tau) := (2\pi|\tau|)^{\alpha}$ for the symbol of the fractional Laplacian $(-\bigtriangleup)^{\alpha/2}$ on $\R$. Fix $z \in \R^{2n}$, $f \in \mathcal{S}$, $\alpha = \alpha_{1} + i\alpha_{2} \in \C$, and $1 < p < \infty$. We note that if $\beta \in \R$, then $(-\bigtriangleup)^{i\beta}$ is bounded on $L^{p}(\R)$ with operator norm $\|m^{i\beta}\|_{L^{p} \to L^{p}} = \|m^{i\beta}\|_{L^{\infty}(\R)} + \|\tau \mapsto |\tau|(m^{i\beta})'(\tau)\|_{L^{\infty}(\R)} \lesssim_{p} 1 + |\beta|$ by the Marcinkiewicz multiplier theorem. For $p = 2$, the term involving the derivative can be omitted. Consequently,
\begin{displaymath} \|(-\bigtriangleup)^{\alpha/2}f_{z}\|_{p} \lesssim_{p} (1 + |\Im \alpha|)\|(-\bigtriangleup)^{\alpha_{1}/2} f_{z}\|_{p}, \qquad f \in \mathcal{S}, \: z \in \R^{2n}, \end{displaymath}
and hence $\||T|^{\alpha}f\|_{p} \lesssim (1 + |\Im \alpha|)\||T|^{\alpha_{1}}f\|_{p}$ by Fubini's theorem. Further, noting that that $|T|^{k}$ and $T^{k}$ differ by at most the vertical Hilbert transform $H$ for $k \in \N$, we obtain
\begin{equation}\label{form54} \||T|^{\alpha_{1}}f\|_{p} \sim_{p} \||T|^{\alpha_{1} - \floor{\alpha_{1}}}T^{\floor{\alpha_{1}}}f\|_{p}. \end{equation}
To proceed, we temporarily denote by $(1 + |T|)^{\beta}$, $\beta \in \R$, the Fourier multiplication on $\He^n \cong \R^{2n+1}$ by $(1 + 4\pi^{2}|\tau|^{2})^{\beta/2}$. Then,
\begin{displaymath} \||T|^{\beta}\varphi\|_{p} \lesssim_{p,\beta} \|(1 + |T|)^{\beta}\varphi\|_{p}, \qquad \varphi \in \mathcal{S}, \: \beta \geq 0, \: 1 < p < \infty, \end{displaymath}
by Fubini's theorem and \cite[Lemma 2, p. 133]{MR0290095}. Also, the operators $(1 + |T|)^{-\beta}$, $\beta \geq 0$, are bounded on $L^{p}(\He^n)$ (even contractions on $L^{p}(\He^n)$), since $(1 - 4\pi^{2}|\tau|^{2})^{-\beta/2}$ is the Fourier transform of the Bessel kernel $J_{\beta}$, recall Definition \ref{d:SobR}. Thus, the right hand side of \eqref{form54} is further bounded by a constant times
\begin{align*} \|(1 + |T|)^{\alpha_{1} - \floor{\alpha_{1}}}T^{\floor{\alpha_{1}}}f\|_{p} & = \|(1 + |T|)^{\alpha_{1} - \ceil{\alpha_{1}}}(1 + |T|)^{\ceil{\alpha_{1}} - \floor{\alpha_{1}}}T^{\floor{\alpha_{1}}}f\|_{p}\\
& \leq \|(1 + |T|)^{\ceil{\alpha_{1}} - \floor{\alpha_{1}}}T^{\floor{\alpha_{1}}}f\|_{p}\\
& \lesssim \|T^{\floor{\alpha_{1}}}f\|_{p} + \|T^{\ceil{\alpha_{1}}}f\|_{p}. \end{align*}
The last inequality is a special case of \cite[Lemma 3, p. 136]{MR0290095}. The proof is complete.  \end{proof}

The following corollary is immediate:

\begin{cor}\label{cor1} Let $\Rea \alpha = 0$. Then $|T|^{\alpha}$ is bounded on $L^{p}(\mathbb{H}^n)$. In particular, both $\Lambda_{\alpha}$ and $\Lambda_{\alpha}^{\ast}$ are bounded on $L^{p}(\mathbb{H}^n)$, and
\begin{equation}\label{form56} \left| \int_{\He^n} (\Lambda_{\alpha}\varphi)(x) \psi(x) \, dx \right| \lesssim_{p} (1 + |\Im \alpha|)e^{\pi |\Im \alpha|}\|\varphi\|_{p}\|\psi\|_{q}, \qquad \varphi, \psi \in \mathcal{S}, \: \tfrac{1}{p} + \tfrac{1}{q} = 1. \end{equation}
\end{cor}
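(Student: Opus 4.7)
The plan is to read off the corollary by specialising Lemma \ref{LpLemma} to the imaginary axis $\Rea \alpha = 0$ and then combining it with the $L^{p}$-estimate \eqref{LpBound2} for $(1-\bigtriangleup)^{-\alpha}$. When $\Rea \alpha = 0$ we have $\floor{\Rea \alpha} = \ceil{\Rea \alpha} = 0$, so $T^{\floor{\Rea\alpha}} f = T^{\ceil{\Rea\alpha}} f = f$, and the lemma collapses to
\begin{displaymath}
\||T|^{\alpha}f\|_{p} \lesssim_{p} (1 + |\Im \alpha|)\|f\|_{p}, \qquad f \in \mathcal{S},\ 1 < p < \infty.
\end{displaymath}
Since $\mathcal{S}$ is dense in $L^{p}(\He^{n})$ for $1 < p < \infty$, this extends to a bounded operator on $L^{p}(\He^{n})$ with operator norm $\lesssim_{p} 1 + |\Im \alpha|$, which is the first assertion.

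Next, I would write out the definitions
\begin{displaymath}
\Lambda_{\alpha}\varphi = (1 - \bigtriangleup)^{-\alpha}(|T|^{\alpha}\varphi) \qquad \text{and} \qquad \Lambda_{\alpha}^{\ast}\psi = |T|^{\alpha}\bigl[(1 - \bigtriangleup)^{-\alpha}\psi\bigr]
\end{displaymath}
and estimate each factor in turn. The bound \eqref{LpBound2} (applied with any fixed $b > 0$) gives $\|(1 - \bigtriangleup)^{-\alpha}g\|_{p} \lesssim_{p} e^{\pi|\Im \alpha|}\|g\|_{p}$ when $\Rea \alpha = 0$. Composing this with the $L^{p}$-bound for $|T|^{\alpha}$ just derived yields, for all $\varphi \in \mathcal{S}$,
\begin{displaymath}
\|\Lambda_{\alpha}\varphi\|_{p} + \|\Lambda_{\alpha}^{\ast}\varphi\|_{p} \lesssim_{p} (1 + |\Im \alpha|)e^{\pi |\Im \alpha|}\|\varphi\|_{p}.
\end{displaymath}
Note that in both compositions every intermediate object lies in a space where the next operator is legitimately defined: for $\Lambda_{\alpha}$, $|T|^{\alpha}\varphi$ is at least in $L^{2}(\He^{n})$ by \eqref{form6} but actually also in $L^{p}(\He^{n})$ by the estimate above, so $(1 - \bigtriangleup)^{-\alpha}$ may be applied as a Calder\'on--Zygmund operator; for $\Lambda_{\alpha}^{\ast}$, $(1 - \bigtriangleup)^{-\alpha}\psi \in \mathcal{S}$ by Remark \ref{fracLaplaceFacts}\nref{i}, so $|T|^{\alpha}$ acts on a Schwartz function as required.

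Finally, the dual estimate \eqref{form56} is just H\"older's inequality applied to the pairing
\begin{displaymath}
\left| \int_{\He^{n}} (\Lambda_{\alpha}\varphi)\psi \right| \leq \|\Lambda_{\alpha}\varphi\|_{p}\|\psi\|_{q} \lesssim_{p} (1 + |\Im \alpha|)e^{\pi|\Im \alpha|}\|\varphi\|_{p}\|\psi\|_{q}.
\end{displaymath}
There is really no obstacle here beyond bookkeeping: Lemma \ref{LpLemma} and \eqref{LpBound2} have already done all the work, and the corollary is a one-line composition once the domains are checked.
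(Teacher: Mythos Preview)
Your proof is correct and follows exactly the same approach as the paper: specialise Lemma \ref{LpLemma} to $\Rea\alpha=0$ so that $T^{\floor{\Rea\alpha}}f=T^{\ceil{\Rea\alpha}}f=f$, then compose with the bound \eqref{LpBound2} for $(1-\bigtriangleup)^{-\alpha}$ and finish with H\"older. The paper's own proof is the one-line instruction ``combine Lemma \ref{LpLemma} with \eqref{LpBound2}''; you have simply written out the bookkeeping.
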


\begin{proof} Recall that $\Lambda_{\alpha}$ and $\Lambda_{\alpha}^{\ast}$ are compositions of $|T|^{\alpha}$ and $(1 - \bigtriangleup)^{-\alpha}$. Then, combine Lemma \ref{LpLemma} with \eqref{LpBound2}. \end{proof}

Next, we prepare for complex interpolation by establishing the necessary analyticity and continuity properties of the operator-valued map $\alpha \mapsto |T|^{\alpha}$.

\begin{proposition}\label{analyticLemma} For $f \in \mathcal{S}$, the $L^{2}(\mathbb{H}^n)$-valued map $\alpha \mapsto |T|^{\alpha}f$ is analytic on $\Rea \alpha > 0$ and continuous on $\Rea \alpha \geq 0$. \end{proposition}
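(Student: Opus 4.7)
The plan is to transfer the problem to the Fourier side via Plancherel. Since $|T|^{\alpha}$ acts as Euclidean Fourier multiplication by $m_{\alpha}(\tau):=(2\pi|\tau|)^{\alpha}$, Plancherel's theorem gives
\[
\||T|^{\alpha}f - |T|^{\beta}f\|_{L^{2}(\He^{n})}^{2} = \int_{\R^{2n+1}} |m_{\alpha}(\tau)-m_{\beta}(\tau)|^{2}\,|\hat{f}(\xi,\tau)|^{2}\,d\xi\,d\tau.
\]
Since $f\in\mathcal{S}$, $\hat{f}$ is Schwartz and decays rapidly at infinity, so both claims will reduce to standard dominated convergence; the only care required is at $\tau=0$, where $m_{\alpha}$ blows up when $\operatorname{Re}\alpha<0$, but this regime is avoided throughout.

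For continuity at a point $\alpha_{0}$ with $\operatorname{Re}\alpha_{0}\geq 0$, I would pick $\varepsilon>0$ with $\varepsilon<1/2$ and, if $\operatorname{Re}\alpha_{0}>0$, also $\varepsilon<\operatorname{Re}\alpha_{0}$, and restrict to $|\alpha-\alpha_{0}|\leq\varepsilon$. The uniform bound $|m_{\alpha}(\tau)|\leq\max((2\pi|\tau|)^{\operatorname{Re}\alpha_{0}+\varepsilon},(2\pi|\tau|)^{\operatorname{Re}\alpha_{0}-\varepsilon})$, squared and multiplied by $|\hat{f}|^{2}$, yields an integrable majorant (by Fubini, doing the $\xi$-integration first): near $\tau=0$ the worst singularity is $|\tau|^{-2\varepsilon}$, which is integrable on $\R$ since $2\varepsilon<1$, while large $|\tau|$ is absorbed by Schwartz decay. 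Because $m_{\alpha}(\tau)\to m_{\alpha_{0}}(\tau)$ on the co-null set $\{\tau\neq 0\}$, dominated convergence then gives $\||T|^{\alpha}f - |T|^{\alpha_{0}}f\|_{2}\to 0$.

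For analyticity on $\operatorname{Re}\alpha>0$, I would establish norm-differentiability. The candidate derivative at $\alpha_{0}$ is the $L^{2}$-function with Fourier transform $m_{\alpha_{0}}(\tau)\log(2\pi|\tau|)\hat{f}(\xi,\tau)$, which lies in $L^{2}$ since $\hat{f}\in\mathcal{S}$ and $\operatorname{Re}\alpha_{0}>0$. Fix $0<r<\operatorname{Re}\alpha_{0}$ and $|h|<r$; the identity
\[
\frac{m_{\alpha_{0}+h}(\tau)-m_{\alpha_{0}}(\tau)}{h} - m_{\alpha_{0}}(\tau)\log(2\pi|\tau|) = \int_{0}^{1}\bigl[m_{\alpha_{0}+sh}(\tau)-m_{\alpha_{0}}(\tau)\bigr]\log(2\pi|\tau|)\,ds
\]
has pointwise modulus at most $2|\log(2\pi|\tau|)|\max((2\pi|\tau|)^{\operatorname{Re}\alpha_{0}+r},(2\pi|\tau|)^{\operatorname{Re}\alpha_{0}-r})$, uniformly in $h$. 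Squared and multiplied by $|\hat{f}|^{2}$ this is integrable: near $\tau=0$ the logarithmic factor is absorbed by the strictly positive power $\operatorname{Re}\alpha_{0}-r>0$, and for large $|\tau|$ the Schwartz decay dominates. The left-hand side converges pointwise to $0$ as $h\to 0$, so dominated convergence delivers differentiability in $L^{2}$-norm and hence analyticity.

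The only real obstacle is verifying that the dominating functions stay integrable uniformly over small neighborhoods of $\alpha_{0}$, which dictates the above choices of $\varepsilon$ and $r$ and uses crucially that $\operatorname{Re}\alpha\geq 0$ throughout. Once that is in place, the argument is a routine dominated-convergence exercise.
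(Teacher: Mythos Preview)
Your proposal is correct and follows essentially the same route as the paper: both transfer to the Fourier side via Plancherel, identify the candidate derivative as multiplication by $m_{\alpha}(\tau)\log(2\pi|\tau|)$, and then control the difference quotient pointwise in $\tau$ with a uniform majorant. The paper executes this via a bilinear pairing with a test function $g$ and an auxiliary weight $(1+4\pi^{2}|\tau|^{2})^{k}$, obtaining a Lipschitz bound $\lesssim|\alpha-\beta|$ through two mean value theorem applications; your direct Plancherel plus dominated convergence argument is a slightly leaner variant of the same idea.
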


\begin{proof} As before, write $m^{\alpha}(\tau) := (2\pi|\tau|)^{\alpha}$. We first record that, for $\tau \in \R$ fixed,
\begin{equation}\label{form28} n_{\alpha}(\tau) := \frac{d}{d\alpha} m^{\alpha}(\tau) = (2\pi|\tau|)^{\alpha}\ln 2\pi|\tau| \quad \text{and} \quad \frac{d}{d\alpha}n^{\alpha}(\tau) = (2\pi |\tau|)^{\alpha}(\ln 2\pi|\tau|)^{2} \end{equation}
for $\alpha \in \C$. Then, we consider the function $\rho$ defined by
\begin{displaymath} \rho(z,t) = \left[(\xi,\tau) \mapsto n_{\alpha}(\tau)\hat{f}(\xi,\tau)\right]^{\check{}}(z,t), \qquad (z,t) \in \He^n. \end{displaymath}
Clearly $\rho \in L^{2}(\He^n)$ by Plancherel, and we claim that
\begin{equation}\label{form21} \frac{|T|^{\beta}f - |T|^{\alpha}f}{\beta - \alpha} \stackrel{L^{2}}{\to} \rho, \qquad \Rea \alpha \geq 0, \end{equation}
as $\beta \to \alpha$ inside the half-space $\Rea \beta \geq 0$. To see this, fix any $g \in \mathcal{S}$, let $$k := \ceil{\Rea \alpha}/2 + 1,$$ and consider
\begin{align*} \int_{\He^n} \left[\frac{|T|^{\beta}f - |T|^{\alpha}f}{\beta - \alpha} - \rho \right] g & = \int_{\R^{2n}} \int_{\R} \left[\frac{m_{\beta}(\tau) - m_{\alpha}(\tau)}{\beta - \alpha} - n_{\alpha}(\tau) \right] \widehat{f_{z}}(\tau)\widehat{g_{z}}(\tau) \, d\tau \, dz\\
& = \int_{\R^{2n}} \int_{\mathbb{R}}\eta_{\alpha,\beta}(\tau)(1 + 4\pi^{2}|\tau|^{2})^{k}\widehat{f_{z}}(\tau)\widehat{g_{z}}(\tau) \, d\tau \, dz, \end{align*}
where
\begin{displaymath} \eta_{\alpha,\beta}(\tau) := \frac{m_{\beta}(\tau) - m_{\alpha}(\tau)}{(\beta - \alpha)(1 + 4\pi^{2}|\tau|^{2})^{k}} - \frac{n_{\alpha}(\tau)}{(1 + 4\pi^{2}|\tau|^{2})^{k}}, \qquad \Rea \alpha, \Rea \beta \geq 0, \: \tau \in \R. \end{displaymath}
We will show in a moment that $\|\eta_{\alpha,\beta}\|_{L^{\infty}} \lesssim |\alpha - \beta|$, if $\beta$ is close enough to $\alpha$. Once this has been verified, we can estimate (for such $\beta$)
\begin{align*} \left| \int_{\He^n} \left[\frac{|T|^{\beta}f - |T|^{\alpha}f}{\beta - \alpha} - \rho \right] g \right| & \leq \int_{\R^{2n}} \left| \int_{\R} \eta_{\alpha,\beta}(\tau)(1 + 4\pi^{2}|\tau|^{2})^{k}\widehat{f_{z}}(\tau)\widehat{g_{z}}(\tau) \, d\tau \right| \, dz\\
& \lesssim |\alpha - \beta| \int_{\R^{2n}} \|\tau \mapsto (1 + 4\pi^{2}|\tau|^{2})^{k}\widehat{f}_{z}(\tau)\|_{L^{2}(\R)}\|\widehat{g_{z}}\|_{L^{2}(\R)} \, dz\\
& = |\alpha - \beta| \int_{\R^{2n}} \|(1 - \bigtriangleup_{\R})^{k} f_{z}\|_{L^{2}(\R)} \|g_{z}\|_{L^{2}(\R)} \, dz\\
& \lesssim |\alpha - \beta| \left[\sum_{j = 0}^{2k} \|T^{j} f\|_{2} \right]\|g\|_{2}.   \end{align*}
Taking a supremum over $g \in \mathcal{S}$ with $\|g\|_{2} \leq 1$ gives \eqref{form21} (recalling that $f \in \mathcal{S}$, so any finite sum of derivatives of $f$ is in $L^{2}(\He^n)$).

It remains to show that $\|\eta_{\alpha,\beta}\|_{L^{\infty}(\R)} \lesssim |\alpha - \beta|$ for $\beta$ close enough to $\alpha$. For $\tau \in \R$ fixed, two applications of the mean value theorem give
\begin{equation}\label{form22} \left|\frac{m_{\beta}(\tau) - m_{\alpha}(\tau)}{\beta - \alpha} - n_{\alpha}(\tau)\right| = |n_{\gamma}(\tau) - n_{\alpha}(\tau)| \leq \left| \frac{d}{d \alpha}(\alpha \mapsto n_{\alpha}(\tau))|_{\alpha = \zeta} \right||\alpha - \beta| \end{equation}
for some $\gamma,\zeta \in [\alpha,\beta]$. Now, the $\alpha$-derivative on the right was computed in \eqref{form28}, and we obtain
\begin{displaymath} |\eta_{\alpha,\beta}(\tau)| \leq |\alpha - \beta| \frac{(2\pi|\tau|)^{\Rea \zeta}(\ln 2\pi|\tau|)^{2}}{(1 + 4\pi^{2}|\tau|^{2})^{k}} \lesssim |\alpha - \beta|, \end{displaymath}
assuming that $\beta$ is sufficiently close to $\alpha$ so that $\Rea \zeta \in [\Rea \alpha ,\Rea \beta] \subset [0,2k)$. In particular, $\|\eta_{\alpha,\beta}\|_{L^{\infty}(\R)} \lesssim |\alpha - \beta|$, as desired, and the proof of \eqref{form21} is complete. Both claims of the proposition follow immediately from \eqref{form21}. \end{proof}

Now the desired analyticity and continuity properties of $\alpha \mapsto \Lambda_{\alpha}$ easily follow by combining Proposition \ref{analyticLemma} with Proposition \ref{analyticityProp}.

\begin{lemma}\label{analyticityLemma} For $f \in \mathcal{S}$, the $L^{2}(\mathbb{H}^n)$-valued map $f \mapsto \Lambda_{\alpha}f$ is analytic on $\Rea \alpha > 0$ and continuous on $\Rea \alpha \geq 0$. \end{lemma}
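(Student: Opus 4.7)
The plan is to combine Proposition \ref{analyticLemma} (analyticity/continuity of $\alpha \mapsto |T|^\alpha f$ in $L^2$ for $f \in \mathcal{S}$) with Proposition \ref{analyticityProp} (analyticity/continuity of $\alpha \mapsto (1 - \bigtriangleup)^{-\alpha} g$ in $L^p$ for $g \in L^p$) via a standard "product rule" for operator-valued analytic functions. The key identity is
\begin{equation*}
 \Lambda_\beta f - \Lambda_\alpha f = (1 - \bigtriangleup)^{-\beta}\bigl[ |T|^\beta f - |T|^\alpha f \bigr] + \bigl[ (1 - \bigtriangleup)^{-\beta} - (1 - \bigtriangleup)^{-\alpha} \bigr]\bigl( |T|^\alpha f \bigr),
\end{equation*}
which lets us handle the two factors of $\Lambda_\alpha = (1 - \bigtriangleup)^{-\alpha}\circ |T|^\alpha$ one at a time. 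Throughout I will use the uniform bound $\|(1 - \bigtriangleup)^{-\beta}\|_{L^2 \to L^2} \leq C_b \, e^{\pi |\Im \beta|}$ valid for $0 \leq \Rea \beta \leq b$, from \eqref{LpBound2}.

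For continuity on $\Rea \alpha \geq 0$, fix $\alpha_0$ in the closed half-plane and restrict $\alpha$ to a bounded neighbourhood of $\alpha_0$ inside $\Rea \alpha \geq 0$. Applying the decomposition above with $\beta = \alpha$, the first summand has $L^2$-norm bounded by a constant times $\||T|^\alpha f - |T|^{\alpha_0} f\|_2$, which tends to $0$ by Proposition \ref{analyticLemma}. The second summand tends to $0$ in $L^2$ as $\alpha \to \alpha_0$ by Proposition \ref{analyticityProp} applied to the fixed function $|T|^{\alpha_0} f \in L^2(\He^n)$ (which lies in $L^2$ by \eqref{form6}).

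For analyticity on $\Rea \alpha > 0$, divide the decomposition by $\beta - \alpha$ and let $\beta \to \alpha$. In the first summand,
\begin{equation*}
(1 - \bigtriangleup)^{-\beta}\left[ \frac{|T|^\beta f - |T|^\alpha f}{\beta - \alpha} \right]
\end{equation*}
converges in $L^2$ to $(1 - \bigtriangleup)^{-\alpha}\bigl[ \tfrac{d}{d\alpha}|T|^\alpha f \bigr]$: one splits off $(1 - \bigtriangleup)^{-\beta}$ applied to the difference between the quotient and its limit (controlled by the uniform $L^2$-bound of $(1-\bigtriangleup)^{-\beta}$ and Proposition \ref{analyticLemma}), and separately uses continuity of $\beta \mapsto (1-\bigtriangleup)^{-\beta} h$ on the fixed limit $h = \tfrac{d}{d\alpha}|T|^\alpha f \in L^2$. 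The second summand converges in $L^2$ to the complex derivative of $\alpha' \mapsto (1 - \bigtriangleup)^{-\alpha'}(|T|^\alpha f)$ at $\alpha' = \alpha$, directly from Proposition \ref{analyticityProp}. Hence $\tfrac{d}{d\alpha}\Lambda_\alpha f$ exists in $L^2(\He^n)$ on $\Rea \alpha > 0$.

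I do not anticipate any serious obstacle: the entire argument is a soft combination of the two previously established propositions, and the only point requiring attention is that the uniform $L^2 \to L^2$ estimate for $(1 - \bigtriangleup)^{-\beta}$ from \eqref{LpBound2} is available on any compact subset of the closed right half-plane, which is exactly what we need to make the "product rule" style decomposition work both for continuity up to the boundary and for the difference-quotient argument in the interior.
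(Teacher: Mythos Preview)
Your proposal is correct and follows essentially the same approach as the paper's own proof: the same add-and-subtract decomposition of $\Lambda_\beta f - \Lambda_\alpha f$ into a term governed by Proposition \ref{analyticLemma} (with the uniform $L^2$-bound \eqref{LpBound2} on $(1-\bigtriangleup)^{-\beta}$) and a term governed by Proposition \ref{analyticityProp} applied to the fixed element $|T|^\alpha f \in L^2$. If anything, your treatment of the first summand in the analyticity step is slightly more careful than the paper's, but the two arguments are otherwise interchangeable.
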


\begin{proof} Namely, for $f \in \mathcal{S}$ fixed and $\Rea \alpha, \Rea \beta > 0$, we write
\begin{displaymath} \frac{\Lambda_{\alpha}f - \Lambda_{\beta}f}{\alpha - \beta} = \frac{(1 - \bigtriangleup)^{-\alpha}|T|^{\alpha}f - (1 - \bigtriangleup)^{-\beta}|T|^{\alpha}f}{\alpha - \beta}g + (1 - \bigtriangleup)^{-\beta} \frac{|T|^{\alpha}f - |T|^{\beta}f}{\alpha - \beta}. \end{displaymath}
Since $|T|^{\alpha}f \in L^{2}(\He^n)$ by Plancherel, the first term converges in $L^{2}(\He^n)$ as $\beta \to \alpha$ by Proposition \ref{analyticityProp}. On the other hand, for $\beta$ contained in any compact subset of $\Rea \beta \geq 0$, the operators $(1 - \bigtriangleup)^{-\beta}$ are equicontinuous on $L^{2}(\He^n)$ by \eqref{LpBound2}, or even \eqref{form31}. Thus, the $L^{2}$-convergence of the second term, as $\beta \to \alpha$, follows from Proposition \ref{analyticLemma}.

The continuity on $\Rea \alpha \geq 0$ follows by a similar argument, writing
\begin{displaymath} \Lambda_{\alpha}f - \Lambda_{\beta}f = [(1 - \bigtriangleup)^{-\alpha}|T|^{\alpha}f - (1 - \bigtriangleup)^{-\beta}|T|^{\alpha}f] + (1 - \bigtriangleup)^{-\beta}[|T|^{\alpha}f - |T|^{\beta}f] \end{displaymath}
The first term tends to zero in $L^{2}(\He^n)$ as $\beta \to \alpha$ by Proposition \ref{analyticityProp}, and and the second term does the same by Proposition \ref{analyticLemma}, and the local $L^{2}$-equicontinuity of the family $\{(1 - \bigtriangleup)^{\beta}\}_{\beta \geq 0}$.  \end{proof}

We are now almost prepared to prove the main estimate \eqref{form53}. We will use the following variant of the Phragm\'en-Lindel\"of principle (see \cite[Lemma 1.3.8]{MR3243734} for the case $A = 1$, and deduce the general case by the conformal change of variables $\alpha \mapsto \alpha/A$):
\begin{lemma}\label{PL} Let $A > 0$, $G \colon \{0 \leq \Rea \alpha \leq A\} \to \C$ be continuous and analytic on $0 < \Rea \alpha < A$. Assume that
\begin{equation}\label{aPriori} \sup_{0 \leq \Rea \alpha \leq A} e^{-b|\Im \alpha|} \log |G(\alpha)| < \infty \end{equation}
for some $0 < b < \pi/A$. Then,
\begin{displaymath} |G(\alpha)| \leq \exp \left\{ \frac{\sin \tfrac{\pi \alpha}{A}}{2A} \int_{\R} \left[ \frac{\log |G(i\beta)|}{\cosh \tfrac{\pi \beta}{A} - \cos \tfrac{\pi \alpha}{A}} + \frac{\log |G(A + i\beta)|}{\cosh \tfrac{\pi \beta}{A} + \cos \tfrac{\pi \alpha}{A}} \right] \, d\beta \right\}, \quad 0 < \alpha < A. \end{displaymath} \end{lemma}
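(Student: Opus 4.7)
The plan is to reduce to the case $A = 1$, recorded as \cite[Lemma 1.3.8]{MR3243734}, via the conformal map $\alpha \mapsto \alpha/A$, exactly as the statement suggests. Concretely, I define $H \colon \{0 \leq \Rea \alpha \leq 1\} \to \C$ by $H(\alpha) := G(A\alpha)$. Since $\alpha \mapsto A\alpha$ is a biholomorphism from the unit-width strip onto $\{0 < \Rea z < A\}$ and extends continuously to the closures, $H$ inherits continuity on the closed strip and analyticity on its interior from the corresponding properties of $G$.

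Next, I verify that $H$ satisfies the growth hypothesis required by the $A = 1$ lemma, namely \eqref{aPriori} with an exponent strictly less than $\pi$. Setting $b' := bA$, the hypothesis $0 < b < \pi/A$ gives $0 < b' < \pi$, and for any $\alpha$ in the unit strip with $\Im \alpha = \beta$ one computes
\begin{displaymath} e^{-b'|\Im \alpha|} \log|H(\alpha)| = e^{-bA|\beta|}\log|G(A\alpha)| = e^{-b|\Im(A\alpha)|}\log|G(A\alpha)|. \end{displaymath}
Taking the supremum over $0 \leq \Rea \alpha \leq 1$ transforms this into the supremum over $0 \leq \Rea z \leq A$ of the same quantity applied to $G$, which is finite by assumption.

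Applying the $A = 1$ case to $H$ then gives, for each $0 < s < 1$,
\begin{displaymath} |H(s)| \leq \exp\left\{\frac{\sin \pi s}{2} \int_{\R} \left[\frac{\log|H(i\beta)|}{\cosh \pi\beta - \cos \pi s} + \frac{\log|H(1 + i\beta)|}{\cosh \pi\beta + \cos \pi s}\right] \, d\beta\right\}. \end{displaymath}
To recover the stated conclusion for $\alpha_0 \in (0, A)$, I specialise at $s = \alpha_0/A \in (0,1)$, substitute $H(i\beta) = G(iA\beta)$ and $H(1 + i\beta) = G(A + iA\beta)$, and perform the linear change of variable $\beta' = A\beta$ inside the integral, which supplies the Jacobian factor $1/A$. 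The resulting inequality is precisely the displayed formula.

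The only real subtlety is checking that the rescaled growth exponent $b' = bA$ still lies in $(0, \pi)$, which is guaranteed exactly by the sharp hypothesis $b < \pi/A$; without this matching of constants, the $A = 1$ lemma would not apply. Everything else reduces to a single change of variables in the domain together with a linear change of variable in the boundary integral.
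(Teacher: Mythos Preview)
Your proof is correct and follows exactly the approach indicated in the paper, which simply cites \cite[Lemma 1.3.8]{MR3243734} for the case $A = 1$ and remarks that the general case follows via the change of variables $\alpha \mapsto \alpha/A$. You have faithfully executed this reduction, including the verification that the rescaled growth exponent $b' = bA$ lies in $(0,\pi)$ and the Jacobian factor $1/A$ arising from the substitution $\beta' = A\beta$.
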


We are now prepared to prove the estimate \eqref{form53}.

\begin{proof}[Proof of \eqref{form53}] Fix $f,g \in \mathcal{S}$ with $\|f\|_{p} = 1 = \|g\|_{q}$. Fix also $A \in 2\N \setminus \{0\}$. We consider the map $G = G_{f,g}$ defined by
\begin{displaymath} G(\alpha) := \int_{\He^n} (\Lambda_{\alpha}f)(x)g(x) \, dx, \qquad 0 \leq \Rea \alpha \leq A. \end{displaymath}
Lemma \ref{analyticityLemma} shows that $G$ is analytic on $0 < \Rea \alpha < A$ and continuous on $0 \leq \Rea \alpha \leq A$. We also have the following \emph{a priori} estimate
\begin{equation}\label{form24} |G(\alpha)| \leq \|\Lambda_{\alpha}f\|_{2}\|g\|_{2} \lesssim e^{\pi |\Im \alpha|}\||T|^{\alpha}f\|_{2}\|g\|_{2} \lesssim e^{\pi |\Im \alpha|}[\|f\|_{2} + \|Tf\|_{2}]\|g\|_{2}, \end{equation}
using Cauchy-Schwarz, then \eqref{LpBound2}, and finally Lemma \ref{LpLemma} for $p = 2$. This certainly implies that $G$ satisfies the double exponential growth bound \eqref{aPriori} in Lemma \ref{PL}. So, it remains to study $G(\alpha)$ for $\Rea \alpha \in \{0,A\}$.

For $\Rea \alpha = 0$, we have already noted in \eqref{form56} that
\begin{equation}\label{form23} |G(\alpha)| \lesssim_{p} (1 + |\Im \alpha|)e^{\pi |\Im \alpha|}\|f\|_{p}\|g\|_{q} = (1 + |\Im \alpha|)e^{\pi|\Im \alpha|}, \end{equation}
where $1/p + 1/q = 1$.

We then need to prove a similar estimate for $\Rea \alpha = A$. Note that $|\tau|^{\alpha} = |\tau|^{\alpha - A}\tau^{A}$ for $\tau \in \R$ by $A \in 2\N$. Using this, and that $B_{\alpha}(x) = B_{\alpha}(x^{-1})$ by \eqref{BInverse}, we may write
\begin{align*} G(\alpha) & = \int_{\He^n} [(|T|^{\alpha}f) \ast B_{\alpha}](x)g(x) \, dx\\
& = \int_{\He^n} |T|^{\alpha}f(x)(g \ast B_{\alpha})(x) \, dx\\
& = \int_{\R^{2n}} \int_{\R} (2\pi|\tau|)^{\alpha}\widehat{f_{z}}(\tau) \widehat{(g \ast B_{\alpha})_{z}}(\tau) \, d\tau \, dz\\
& = \int_{\R^{2n}} \int_{\R} \left[ (2\pi|\tau|)^{\alpha - A} \widehat{f_{z}}(\tau) \right] \left[(2\pi \tau)^{A} \widehat{(g \ast B_{\alpha})_{z}}(\tau) \right] \, d\tau \, dz.    \end{align*}
The functions appearing above are so well integrable (recall in particular that $g \ast B_{\alpha} \in \mathcal{S}$) that the manipulations are easily justified.

To proceed, we note that $\Rea (\alpha - A) = 0$, so $(2\pi|\tau|)^{\alpha - A}$ is one of the multipliers we already encountered during the proof of Lemma \ref{LpLemma}. Consequently, the multiplier with symbol $(2\pi |\tau|)^{\alpha - A}$ defines an $L^{p}$-multiplier for all $1 < p < \infty$ with operator norm $\lesssim_{p} 1 + |\Im \alpha|$. Since moreover $T^{A}(g \ast B_{\alpha}) \in \mathcal{S}$, we may apply Parseval's identity one more time, and then H\"older's inequality, and all of this leads to
\begin{displaymath} |G(\alpha)| \lesssim_{p} (1 + |\Im \alpha|)\|f\|_{p}\|T^{A}(g \ast B_{\alpha})\|_{q} = (1 + |\Im \alpha|)\|T^{A}(g \ast B_{\alpha})\|_{q}. \end{displaymath}
To attain an analogue of \eqref{form23} in the case $\Rea \alpha = A$, it therefore remains to prove that
\begin{equation}\label{form30} \|T^{A}(g \ast B_{\alpha})\|_{q} \lesssim_{A,q} e^{\pi|\Im \alpha|}. \end{equation}
First, as $g \in \mathcal{S}$ and $B_{\alpha} \in L^{1}(\mathbb{H}^n)$, we find that $T^{A}(g \ast B_{\alpha}) = (T^{A}g) \ast B_{\alpha} = (1 - \bigtriangleup)^{-\alpha}(T^{A}g)$. Then, using also the semigroup property $(1 - \bigtriangleup)^{-\alpha}\psi = (1 - \bigtriangleup)^{A - \alpha}(1- \bigtriangleup)^{-A}\psi$ for $\psi \in \mathcal{S}$, see \cite[Theorem (3.15)]{MR0494315}, we find that
\begin{displaymath} \|T^{A}(g \ast B_{\alpha})\|_{q} = \|(1 - \bigtriangleup)^{A - \alpha}(1 - \bigtriangleup)^{-A}(T^{A}g)\|_{q} \lesssim_{q} e^{\pi |\Im \alpha|} \|(1 - \bigtriangleup)^{-A}(T^{A}g)\|_{q}. \end{displaymath}
The last inequality was an application of \eqref{LpBound2}. So, it remains to prove that $(1 - \bigtriangleup)^{-A}(T^{A}g)$ is bounded on $L^{q}$. Since $(1 - \bigtriangleup)^{-A}\varphi = (1 - \bigtriangleup)^{-1}(1 - \bigtriangleup)^{-1}\cdots(1 - \bigtriangleup)^{-1}\varphi$ for $\varphi \in \mathcal{S}$, again by the semigroup property stated in \cite[Theorem (3.15)]{MR0494315}, and the $T$-derivative commutes with convolutions, we find that
\begin{displaymath} (1 - \bigtriangleup)^{-A}(T^{A}g) = [(1 - \bigtriangleup)^{-1}T][(1 - \bigtriangleup)^{-1}T] \cdots [(1 - \bigtriangleup)^{-1}T](g). \end{displaymath}
Thus, also observing that each application of $(1 - \bigtriangleup)^{-1}T$ preserves the Schwartz class, the $L^{q}$-boundedness of $g \mapsto (1 - \bigtriangleup)^{-A}(T^{A}g)$ will follow from the $L^{q}$-boundedness of $g \mapsto (1 - \bigtriangleup)^{-1}(Tg) = (Tg) \ast B_{1}$. To this end, we will simply verify that $g \mapsto (Tg) \ast B_{1}$ is a Calder\'on-Zygmund operator on $\He^n$ in the sense described in \cite[XII.5.2]{stein1993harmonic}.

Let $L$ be the distribution
\begin{displaymath} L(\varphi) := -\int_{\He^n} B_{1}(x) (T\varphi)(x) \, dx, \qquad \varphi \in \mathcal{S}. \end{displaymath}
Then, one easily checks that $[(Tg) \ast B_{1}](x) = (g \ast L)(x)$ for $g \in \mathcal{S}$ and $x \in \He^{n}$ (recalling that convolution with a distribution is defined as $(\varphi \ast L)(x) = L(\check{\varphi} \circ \tau_{x^{-1}})$, where $\tau_{x^{-1}}$ is right translation by $x^{-1}$, and $\check{\varphi}(y) = \varphi(y^{-1})$). Also, $L$ clearly coincides on $\He^n \setminus \{0\}$ with the smooth function $TB_{1}$. If $X^{\gamma}$ stands for any horizontal derivative of order $|\gamma| \in \N \cup \{0\}$ (notably $T = X_i X_{n+i} - X_{n+i}X_i$), we have
\begin{equation}\label{form55} |X^{\gamma}B_{1}(x)| \leq \int_{0}^{\infty} |X^{\gamma}h_{s}(x)| \, ds \lesssim_{\gamma} \|x\|_{\mathbb{H}}^{-2 n- |\gamma|}, \qquad x \in \mathbb{H}^n \setminus \{0\},\end{equation}
by \cite[Lemma 4.3.8]{MR3469687}, so in particular
\begin{displaymath} |X^{\gamma}TB_{1}(x)| \lesssim_{\gamma} \|x\|_{\mathbb{H}}^{-(2n + 2) - |\gamma|}, \qquad x \in \He^n \setminus \{0\}. \end{displaymath}
This gives the correct "size and smoothness" assumptions required from the distribution $L$ in \cite[(80)-(82), p.562]{stein1993harmonic}.\footnote{We note that \cite[(81)]{stein1993harmonic} imposes a decay assumption on the Euclidean partial derivatives of $K := TB_{1}$, namely $\partial_{i}K$ for $1 \leq i \leq 2n$, but these can be bounded by the horizontal and vertical derivatives: $|\partial_{i}K(x)| \lesssim |X_{i}K(x)| + \|x\|_{\He}|TK(x)| \lesssim \|x\|^{-(2n + 2) - 1}$ for $x \in \He^{n} \setminus \{0\}$.} We then verify the cancellation condition \cite[(83)]{stein1993harmonic}: if $\Phi$ is a smooth function supported on $B(0,1)$, $R > 0$, and $\Phi^{R} \in \mathcal{D}(\mathbb{H}^n)$ is the function defined by $\Phi^{R}(x) = \Phi(\delta_{1/R}(x))$, we have the uniform bound
\begin{align*} |L(\Phi^{R})| & = \int_{\He^n} B_{1}(x) (T\Phi^{R})(x) = \frac{1}{R^{2}} \int_{\He^n} B_{1}(x)(T\Phi)(\delta_{1/R}(x)) \, dx\\
& = R^{2n} \int_{\He^n} B_{1}(\delta_{R}(x))(T\Phi)(x) \, dx\\
& \stackrel{\eqref{form55}}{\lesssim} \int_{\He^n} \|x\|_{\mathbb{H}}^{-2n} (T\Phi)(x) \, dx \lesssim \|T\Phi\|_{\infty}, \qquad R > 0. \end{align*}
This gives \cite[(83)]{stein1993harmonic}. Therefore, by \cite[Corollary 5.2.4, p. 567]{stein1993harmonic}, the convolution $\varphi \mapsto \varphi \ast L$ extends to a bounded operator on $L^{p}(\He^n)$, $1 < p < \infty$, and in particular
\begin{displaymath} \|(1 - \bigtriangleup)^{-1}(Tg)\|_{q} \lesssim_{q} \|g\|_{q} = 1, \end{displaymath}
as desired. We have now established \eqref{form30}, and therefore shown that \eqref{form23} holds for $\Rea \alpha \in \{0,A\}$.

Now \eqref{form53} follows immediately from the Phragm\'en-Lindel\"of principle, Lemma \ref{PL}. Namely,
\begin{displaymath} |G(\alpha)| \leq \exp \left\{ \frac{\sin(\pi \alpha)}{2A} \int_{\R} \left[ \frac{\log C_{A,p}(1 + |\beta|))e^{\pi |\beta|}}{\cosh \tfrac{\pi \beta}{A} - \cos \tfrac{\pi \beta}{A}} + \frac{\log C_{A,p}(1 + |\beta|))e^{\pi |\beta|}}{\cosh \tfrac{\pi \beta}{A} + \cos \tfrac{\pi \beta}{A}} \right] \, d\beta \right\} < \infty  \end{displaymath}
for $0 < \alpha < A$. Together with the boundary cases $\alpha \in \{0,A\}$ treated separately, this implies \eqref{form53} with implicit constant given by the expression on the right hand side above. \end{proof}

\section{Pointwise fractional $T$-derivatives}\label{sec4}

We have now established that horizontal Sobolev functions of parameters $(\alpha,p)$ are vertical Sobolev functions with parameters $(\alpha/2,p)$. In this section, as the first application, we infer that horizontal Sobolev functions therefore have pointwise a.e. defined fractional $T$-derivatives. In short, we prove Proposition \ref{main2} and Corollary \ref{t:main}. We begin with a remark.

\begin{remark} If $1 \leq p \leq \infty$, and $f \in L^{p}(\He^{n})$, then $f_{z} \in L^{p}(\R)$ for almost every $z \in \R^{2}$. In particular, in the $L^{p}$-equivalence class of any $f \in L^{p}(\He^{n})$ there exists a representative $\bar{f}$ such that $\bar{f}_{z} \in L^{p}(\R)$ for \textbf{every} $z \in \R^{2n}$. Whenever we write $f \in L^{p}(\He^{n})$ in this section, we will have such a representative in mind. \end{remark}

We now introduce the notions of fractional $T$-derivatives we are interested in.

\begin{definition}[Fractional $T$-derivatives]\label{fractionalDerivatives} Let $0 < \alpha < 1$, $1 \leq p \leq \infty$, and $f \in L^{p}(\He^{n})$.
\begin{itemize}
\item[(i)\phantomsection \label{iFrac}] We say that the \emph{fractional $T$-derivative of order $\alpha$ exists at $(z,t) \in \He^{n}$} if the sequence
\begin{equation}\label{truncations} T^{\alpha,\varepsilon}f(z,t) := \int_{|r| > \varepsilon} \frac{f(z,t + r) - f(z,t)}{|r|^{1 + \alpha}} \, dr, \qquad \varepsilon > 0, \end{equation}
converges to a finite limit as $\varepsilon > 0$. Note that, for $\varepsilon > 0$ fixed, the integral in \eqref{truncations} is absolutely convergent, since $f_{z} \in L^{p}(\R)$ by the previous remark, and $r \mapsto \chi_{\R \setminus [-\varepsilon,\varepsilon]}(r)|r|^{-1 - \alpha} \in L^{1}(\R) \cap L^{\infty}(\R)$. In this case, we
write
\begin{displaymath} T^{\alpha}f(z,t) := \text{p.v.} \int \frac{f(z,t + r) - f(z,t)}{|r|^{1 + \alpha}} \, dr := \lim_{\varepsilon \to 0} \int_{|r| > \varepsilon} \frac{f(z,t + r) - f(z,t)}{|r|^{1 + \alpha}} \, dr. \end{displaymath}
\item[(ii)\phantomsection \label{iiFrac}] We say that the fractional $T$-derivative of order
$\alpha$ \emph{exists in $L^{p}$} if $\{T^{\alpha,\varepsilon}f\}_{\varepsilon > 0}$ is a Cauchy sequence in $L^{p}(\He^{n})$.

\item[(iii)\phantomsection \label{iiiFrac}] We say that a distribution $\Lambda$ on $\He^{n}$ is a \emph{distributional $T$-derivative of order $\alpha$} of a function $f \in L^{p}(\He)$, $1 \leq p \leq \infty$, if
\begin{equation}\label{distributionalDef} \Lambda(\varphi) = \int_{\He^{n}} f \cdot T^{\alpha}\varphi, \qquad \varphi \in \mathcal{D}. \end{equation}
It is easy to see that $T^{\alpha}\varphi \in L^{1}(\He^{n}) \cap L^{\infty}(\He^{n})$ for $0 < \alpha < 1$ and $\varphi \in \mathcal{D}$, so the integral on the right is absolutely convergent for $f \in L^{p}(\He^{n})$, $1 \leq p \leq \infty$.
\end{itemize}
\end{definition}

We begin by observing that $L^{p}$-existence implies distributional existence.

\begin{proposition}\label{p:fromLpToDist} Let $0<\alpha<1$, $1\leq p \leq \infty$ and $f\in  L^p(\mathbb{H}^{n})$. If $T^{\alpha}f$ exists in $L^p$, then
$T^{\alpha}f$ is a distributional $T$-derivative of order $\alpha$.  \end{proposition}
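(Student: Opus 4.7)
The natural plan is to prove the truncated identity first and then pass to the limit $\varepsilon \to 0$. So fix $\varphi \in \mathcal{D}$ and $\varepsilon > 0$. Writing $K_{\varepsilon}(r) := \chi_{\{|r| > \varepsilon\}}|r|^{-1 - \alpha}$, we have $K_{\varepsilon} \in L^{1}(\R)$, $K_{\varepsilon}$ is even, and
\begin{displaymath} T^{\alpha,\varepsilon}f(z,t) = \int f(z,t + r) K_{\varepsilon}(r) \, dr - \|K_{\varepsilon}\|_{1} f(z,t), \end{displaymath}
and similarly for $\varphi$. The first step is therefore to verify, using Fubini, that
\begin{equation}\label{truncIdentity} \int_{\He^{n}} T^{\alpha,\varepsilon}f \cdot \varphi = \int_{\He^{n}} f \cdot T^{\alpha,\varepsilon} \varphi. \end{equation}
The absolute integrability of the triple integral in $(z,t,r)$ is routine: the part from the $f(z,t)\varphi(z,t)$ piece is $\|K_\varepsilon\|_1 \|f\varphi\|_1 < \infty$ since $\varphi$ is bounded and compactly supported, and the part from $f(z,t+r)\varphi(z,t)$ is bounded, after translating $u = t+r$ in $t$, by $\|\varphi\|_{\infty}|\spt\varphi|^{1/q}\|f\|_{p}\|K_{\varepsilon}\|_{1}$ via H\"older. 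Then a change of variables $u = t + r$ inside the integral (and a second change $s = -r$, exploiting evenness of $K_{\varepsilon}$) gives \eqref{truncIdentity}.

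Next, take $\varepsilon \to 0$. The left-hand side converges to $\int T^{\alpha}f \cdot \varphi$ directly from the hypothesis $T^{\alpha,\varepsilon}f \to T^{\alpha}f$ in $L^{p}$ together with H\"older's inequality against $\varphi \in L^{q}$ (interpreting $q$ as $\infty$ when $p = 1$). For the right-hand side we use that, for any $\varphi \in \mathcal{D}$, the family $\{T^{\alpha,\varepsilon}\varphi\}_{0 < \varepsilon < 1}$ converges pointwise to $T^{\alpha}\varphi$ and is dominated by a single function in $L^{1}(\He^{n}) \cap L^{\infty}(\He^{n})$. Indeed, splitting $|r| \leq 1$ versus $|r| > 1$ and using the cancellation $\int_{\varepsilon \leq |r| \leq 1} r \partial_{t}\varphi(z,t) |r|^{-1 - \alpha} \, dr = 0$, the near-origin contribution is controlled by $\|\partial_{t}^{2}\varphi\|_{\infty} \int_{|r| \leq 1}|r|^{1 - \alpha} \, dr$, while for $(z,t)$ at distance $\gtrsim 1$ from $\spt \varphi$ only the tail term survives, giving the decay $|T^{\alpha,\varepsilon}\varphi(z,t)| \lesssim_{\varphi} (1 + |t|)^{-1-\alpha}\chi_{\spt_{z}\varphi}(z)$, which is in every $L^{q}(\He^{n})$.

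Equipped with this dominating function, the dominated convergence theorem (applied to the integrand $f \cdot (T^{\alpha,\varepsilon}\varphi - T^{\alpha}\varphi)$, which is pointwise a.e. going to zero and bounded in modulus by $2|f|\Phi \in L^{1}(\He^{n})$ thanks to $f \in L^{p}$ and $\Phi \in L^{q}$) yields $\int f \cdot T^{\alpha,\varepsilon}\varphi \to \int f \cdot T^{\alpha}\varphi$. Combining this with the $\varepsilon \to 0$ limit of the left-hand side of \eqref{truncIdentity} gives precisely \eqref{distributionalDef}, proving the proposition. The only delicate point is assembling the uniform dominating function for $T^{\alpha,\varepsilon}\varphi$, and that is entirely handled by the standard principal-value symmetry argument near $r = 0$ together with compact support of $\varphi$ at infinity.
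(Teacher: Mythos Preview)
Your proof is correct and follows essentially the same strategy as the paper: establish the truncated identity $\int T^{\alpha,\varepsilon}f \cdot \varphi = \int f \cdot T^{\alpha,\varepsilon}\varphi$ and then let $\varepsilon \to 0$. The only cosmetic differences are that the paper derives the truncated identity via the antisymmetry observation $\iint_{|s-t|>\varepsilon}\frac{f_z(s)\varphi_z(s)-f_z(t)\varphi_z(t)}{|s-t|^{1+\alpha}}\,ds\,dt=0$ rather than your Fubini/change-of-variables route, and it handles the right-hand limit by invoking the $L^{q}$-existence of $T^{\alpha}\varphi$ (its Lemma~\ref{l:LpDerivForSmooth}) instead of your explicit dominating function.
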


\begin{proof} Fix $\varphi \in \mathcal{D}$ and let $q$ be the dual exponent of $p$.
Since $T^{\alpha}f$ exists in $L^{p}$, and $\varphi \in L^q(\mathbb{H}^{n})$, we can first write
\begin{displaymath} \int_{\He^{n}}  \varphi \cdot T^{\alpha}f = \lim_{\varepsilon \to 0} \int_{\R^{2}} \left[ \int_{\R} \varphi(z,t) \int_{|r| > \varepsilon} \frac{f(z,t + r) - f(z,t)}{|r|^{1+\alpha}} \, dr \, dt \right] \, dz \end{displaymath}
For $z \in \R^{2n}$ fixed, we define $\varphi_{z} \in \mathcal{D}(\R)$ and $f_{z} \in L^{p}(\R)$ as before. Note that
\begin{align*} \int_{\R} \varphi_{z}(t) \int_{|r| > \varepsilon} \frac{f_{z}(t + r) - f_{z}(t)}{|r|^{1+\alpha}} \, dr \, dt & = \int_{\R} \varphi_{z}(t) \int_{|s - t| > \varepsilon} \frac{f_{z}(s) - f_{z}(t)}{|s - t|^{1+\alpha}} \, ds \, dt\\
& = \int_{\R} f_{z}(s) \int_{|r - t| > \varepsilon} \frac{\varphi_{z}(t) - \varphi_{z}(s)}{|r - t|^{1+\alpha}} \, dt \, ds\\
&\quad + \iint_{|s - t| > \varepsilon} \frac{f_{z}(s)\varphi_{z}(s) - f_{z}(t)\varphi_{z}(t)}{|s - t|^{1+\alpha}} \, dr \, dr\\
& = \int_{\R} f_{z}(s) \int_{|r - t| > \varepsilon} \frac{\varphi_{z}(t) - \varphi_{z}(s)}{|r - t|^{1+\alpha}} \, dt \, ds.  \end{align*}
These manipulations are justified, because the function
\begin{displaymath} (s,t) \mapsto \frac{f_{z}(s)\varphi_{z}(s) - f_{z}(t)\varphi_{z}(t)}{|s - t|^{1+\alpha}} \end{displaymath}
is integrable over the domain $\{(s,t) \in \R \times \R : |s - t| > \varepsilon\}$, for $\varepsilon > 0$, and its integral is clearly zero. Hence,
\begin{displaymath} \int_{\He^{n}} \varphi \cdot T^{\alpha}f = \lim_{\varepsilon \to 0} \int_{\He^{n}} f \cdot T^{\alpha,\varepsilon}\varphi = \int_{\He^{n}} f \cdot T^{\alpha}\varphi. \end{displaymath}
The last equation follows from the easy fact that $T^{\alpha} \varphi$ exists in $L^{q}$ for $\varphi \in \mathcal{D}$; we will give the details in Lemma \ref{l:LpDerivForSmooth}. This completes the proof. \end{proof}

The proposition above means that we may concentrate on pointwise and $L^{p}$ existence in the sequel. We now recall the statement of Proposition \ref{main2}:
\begin{proposition}\label{main2Technical} Let $1 < p < \infty$, $0 < \alpha < 1$, and $f \in S^{p}_{2\alpha}(\He^{n})$. Then $T^{\alpha}f$ exists pointwise a.e. and in $L^{p}$ -- and hence in the distributional sense. \end{proposition}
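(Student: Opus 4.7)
The plan is to reduce everything to a one-dimensional statement about Bessel potential functions on $\R$, apply Wheeden's theorem slice-by-slice, and combine with Fubini and dominated convergence.

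First, by Theorem \ref{main} (already proved), we know $f \in V^{p}_{\alpha}(\He^{n})$, with
\begin{displaymath}
\int_{\R^{2n}} \|f_{z}\|_{p,\alpha}^{p} \, dz \;\lesssim_{\alpha,p}\; \|f\|_{p,2\alpha}^{p} \;<\; \infty.
\end{displaymath}
In particular $f_{z} \in S^{p}_{\alpha}(\R)$ for a.e. $z \in \R^{2n}$. Since
\begin{displaymath}
T^{\alpha,\varepsilon}f(z,t) \;=\; \int_{|r|>\varepsilon} \frac{f_{z}(t+r)-f_{z}(t)}{|r|^{1+\alpha}}\,dr
\end{displaymath}
depends only on the slice $f_{z}$, the question of a.e. and $L^{p}$-existence of $T^{\alpha}f$ on $\He^{n}$ is purely a question about the one-dimensional fractional derivatives of the functions $f_{z}$.

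Next I would invoke Wheeden's theorem \cite{MR0240682}. For $g \in S^{p}_{\alpha}(\R)$ with $0<\alpha<1$ and $1<p<\infty$, the principal value
\begin{displaymath}
Dg(t) := \mathrm{p.v.}\int_{\R} \frac{g(t+r)-g(t)}{|r|^{1+\alpha}}\,dr
\end{displaymath}
exists for a.e. $t \in \R$, exists in $L^{p}(\R)$, and the associated maximal truncation operator satisfies
\begin{displaymath}
\left\|\, \sup_{\varepsilon>0}\,\Big|\int_{|r|>\varepsilon}\tfrac{g(\,\cdot\,+r)-g(\,\cdot\,)}{|r|^{1+\alpha}}\,dr\Big| \,\right\|_{L^{p}(\R)} \;\lesssim_{\alpha,p}\; \|g\|_{p,\alpha}.
\end{displaymath}
Applying this to $g = f_{z}$ for a.e. $z \in \R^{2n}$, I conclude that $T^{\alpha}f_{z}(t)$ exists for a.e. $t$, the $\varepsilon$-truncations converge in $L^{p}(\R)$ to $T^{\alpha}f_{z}$, and
\begin{equation}\label{planMax}
\|\,T^{\alpha,\ast} f_{z}\,\|_{L^{p}(\R)} \;\lesssim_{\alpha,p}\; \|f_{z}\|_{p,\alpha}, \qquad T^{\alpha,\ast} f_{z}(t) := \sup_{\varepsilon>0}|T^{\alpha,\varepsilon} f_{z}(t)|.
\end{equation}
By Fubini's theorem, $T^{\alpha}f(z,t)$ exists at a.e. $(z,t) \in \He^{n}$, which settles the pointwise a.e. part.

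For the $L^{p}(\He^{n})$ part, I would integrate \eqref{planMax} in $z$ and use Theorem \ref{main}:
\begin{displaymath}
\int_{\He^{n}} |T^{\alpha,\ast}f(z,t)|^{p}\,dt\,dz \;\lesssim_{\alpha,p}\; \int_{\R^{2n}} \|f_{z}\|_{p,\alpha}^{p}\,dz \;\lesssim\; \|f\|_{p,2\alpha}^{p}.
\end{displaymath}
Since $|T^{\alpha,\varepsilon}f(z,t)| \leq T^{\alpha,\ast}f(z,t)$ for every $\varepsilon>0$ and the left side converges pointwise a.e. to $T^{\alpha}f$, the dominated convergence theorem on $\He^{n}$ gives $T^{\alpha,\varepsilon}f \to T^{\alpha}f$ in $L^{p}(\He^{n})$, and in particular $\|T^{\alpha}f\|_{L^{p}(\He^{n})} \lesssim \|f\|_{p,2\alpha}$. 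Combined with Proposition \ref{p:fromLpToDist}, this yields distributional existence as well.

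The only non-routine input is the one-dimensional maximal inequality \eqref{planMax}; the rest is Fubini and dominated convergence. The reason Theorem \ref{main} (rather than just \eqref{LNFrac}) is needed here is exactly the discussion in the introduction: for $p>2$, the square-function condition \eqref{form1} would not yield $f_{z} \in S^{p}_{\alpha}(\R)$, so Wheeden's theorem could not be applied slicewise. Having the sharper conclusion $f_{z} \in S^{p}_{\alpha}(\R)$ from Theorem \ref{main} is precisely what makes the whole reduction work.
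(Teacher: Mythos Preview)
Your argument is correct and follows the same overall strategy as the paper: use Theorem \ref{main} to get $f_{z}\in S^{p}_{\alpha}(\R)$ for a.e.\ $z$, then apply Wheeden's theorem slice-by-slice. The pointwise a.e.\ part is handled identically.

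The one genuine difference is in how $L^{p}(\He^{n})$-convergence is obtained. You invoke the \emph{maximal} inequality \eqref{planMax} and then dominated convergence on $\He^{n}$. The paper instead uses only the uniform bound on individual truncations, $\|\partial^{\alpha,\varepsilon}g\|_{p}\lesssim\|g\|_{p,\alpha}$ (this is all it quotes from Wheeden, see \eqref{form10}), and then runs a standard density argument: approximate $f$ in $S^{p}_{2\alpha}(\He^{n})$ by $\varphi\in\mathcal{D}$, use Lemma \ref{l:LpDerivForSmooth} to get the Cauchy property for $\varphi$, and transfer it to $f$ via the uniform bound. Your route is shorter and avoids the density step, but it relies on the maximal truncation estimate, which is a stronger statement than what the paper cites; you should make sure this is actually in \cite{MR0240682} (it typically is, as the usual mechanism behind a.e.\ convergence). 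Alternatively, note that the paper's quoted ingredients already suffice for a DCT argument without the maximal function: since Wheeden gives $\|\partial^{\alpha,\varepsilon}f_{z}-\partial^{\alpha}f_{z}\|_{L^{p}(\R)}\to 0$ for a.e.\ $z$ and $\|\partial^{\alpha,\varepsilon}f_{z}\|_{L^{p}(\R)}^{p}\lesssim\|f_{z}\|_{p,\alpha}^{p}\in L^{1}(\R^{2n})$, dominated convergence in the $z$-variable alone yields $T^{\alpha,\varepsilon}f\to T^{\alpha}f$ in $L^{p}(\He^{n})$.
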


The proof is an application of the next result of Wheeden \cite[Theorems 1\&3]{MR0240682}, concerning the existence of fractional derivatives of functions in $S^{p}_{\alpha}(\R)$:
\begin{thm}[Wheeden] Let $0 < \alpha < 1$, $1 < p < \infty$, and $f \in S^{p}_{\alpha}(\R)$. Then
\begin{displaymath} \partial^{\alpha}f(t) := \textup{p.v.} \int \frac{f(t + r) - f(t)}{|r|^{1 + \alpha}} \, dr \end{displaymath}
exists both a.e. and in $L^{p}(\R)$. The latter statement means that $\partial^{\alpha}f \in L^{p}$, and the truncations
\begin{displaymath} t \mapsto \partial^{\alpha,\varepsilon}f(t) := \int_{|r| > \varepsilon} \frac{f(t + r) - f(t)}{|r|^{1 + \alpha}} \, dr \end{displaymath}
converge to $\partial^{\alpha}f$ in $L^{p}$ as $\varepsilon \to 0$. Moreover,
\begin{equation}\label{form10} \|\partial^{\alpha,\varepsilon}f\|_{p} \lesssim_{\alpha,p} \|f\|_{p,\alpha}, \qquad \varepsilon > 0. \end{equation}
\end{thm}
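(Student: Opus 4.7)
The plan is to prove Wheeden's theorem by recognizing $\partial^{\alpha}$ as (a constant multiple of) the one-dimensional fractional Laplacian $(-\bigtriangleup)^{\alpha/2}$ realized via a principal-value singular integral, and to use Bessel-potential/Fourier-multiplier theory for the uniform bound, then upgrade to pointwise convergence through a maximal-function estimate. Throughout, write $f = g \ast J_{\alpha}$ with $g \in L^{p}(\R)$ and $\|g\|_{p} = \|f\|_{p,\alpha}$.

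First I would reduce the uniform bound \eqref{form10} to a multiplier estimate. A direct calculation gives
\begin{displaymath} \partial^{\alpha,\varepsilon}f = g \ast K_{\alpha}^{\varepsilon}, \quad \widehat{K_{\alpha}^{\varepsilon}}(\tau) = (1 + 4\pi^{2}\tau^{2})^{-\alpha/2}\, m_{\varepsilon}(\tau), \end{displaymath}
where, using $r \leftrightarrow -r$ symmetry,
\begin{displaymath} m_{\varepsilon}(\tau) = \int_{|r|>\varepsilon} \frac{e^{2\pi i r\tau} - 1}{|r|^{1+\alpha}}\,dr = 2 \int_{\varepsilon}^{\infty} \frac{\cos(2\pi r\tau) - 1}{r^{1+\alpha}}\,dr = |\tau|^{\alpha}\phi(\varepsilon|\tau|), \end{displaymath}
after the change of variables $s = r|\tau|$, with $\phi(\eta) := 2\int_{\eta}^{\infty}(\cos(2\pi s) - 1)/s^{1+\alpha}\,ds$. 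Both $\phi$ and $\eta \mapsto \eta\phi'(\eta) = -2(\cos(2\pi\eta) - 1)/\eta^{\alpha}$ are bounded on $[0,\infty)$ (the integrand defining $\phi$ is $O(s^{1-\alpha})$ near $0$ and $O(s^{-1-\alpha})$ at infinity since $0<\alpha<1$). Combined with the fact that $\tau \mapsto |\tau|^{\alpha}(1+4\pi^{2}\tau^{2})^{-\alpha/2}$ is itself a bounded Marcinkiewicz multiplier, the product
\begin{displaymath} \mu_{\varepsilon}(\tau) := \widehat{K_{\alpha}^{\varepsilon}}(\tau) = \Bigl[\tfrac{|\tau|^{\alpha}}{(1+4\pi^{2}\tau^{2})^{\alpha/2}}\Bigr] \cdot \phi(\varepsilon|\tau|) \end{displaymath}
satisfies $|\mu_{\varepsilon}(\tau)| + |\tau\,\mu_{\varepsilon}'(\tau)| \lesssim_{\alpha} 1$ uniformly in $\varepsilon > 0$. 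Mikhlin's theorem then yields $\|\partial^{\alpha,\varepsilon}f\|_{p} = \|g \ast K_{\alpha}^{\varepsilon}\|_{p} \lesssim_{\alpha,p} \|g\|_{p} = \|f\|_{p,\alpha}$, which is exactly \eqref{form10}.

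Next, to identify the limit and obtain $L^{p}$-convergence, I would work first on the dense subclass $\mathcal{S}(\R) \subset S^{p}_{\alpha}(\R)$. For $f \in \mathcal{S}(\R)$, the symmetric formula
\begin{displaymath} \partial^{\alpha,\varepsilon}f(t) = \int_{\varepsilon}^{\infty} \frac{f(t+r) + f(t-r) - 2f(t)}{r^{1+\alpha}}\,dr \end{displaymath}
has an absolutely convergent limit as $\varepsilon \to 0$ (Taylor expansion gives $O(r^{2})$ near $0$, Schwartz decay controls the tail), so $\partial^{\alpha}f(t)$ exists everywhere and, by dominated convergence applied to $|m_{\varepsilon}(\tau)\widehat{f}(\tau)|$, satisfies $\widehat{\partial^{\alpha}f}(\tau) = c_{\alpha}|2\pi\tau|^{\alpha}\widehat{f}(\tau)$. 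Pointwise convergence together with the uniform $L^{p}$-bound and an envelope estimate upgrades this to $L^{p}$-convergence on $\mathcal{S}(\R)$. A standard $\varepsilon/3$ argument, using density of $\mathcal{S}(\R)$ in $S^{p}_{\alpha}(\R)$ (Schwartz functions are dense because convolution with $J_{\alpha}$ maps $\mathcal{S}$ to $\mathcal{S}$ and $\mathcal{S}$ is dense in $L^{p}$), then shows that $\{\partial^{\alpha,\varepsilon}f\}_{\varepsilon>0}$ is $L^{p}$-Cauchy for every $f \in S^{p}_{\alpha}(\R)$, defining $\partial^{\alpha}f \in L^{p}(\R)$ and retaining the bound \eqref{form10} in the limit.

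The main obstacle is pointwise a.e.\ convergence, which is not produced by the Mikhlin argument above. I would handle it by establishing an $L^{p}$-bound for the maximal truncation $\partial^{\alpha,*}f(t) := \sup_{\varepsilon > 0} |\partial^{\alpha,\varepsilon}f(t)|$. The cleanest route is a Cotlar-type inequality: show that $K_{\alpha} := \lim_{\varepsilon \to 0} K_{\alpha}^{\varepsilon}$ (in the tempered distribution sense) agrees off the origin with a Calder\'on--Zygmund kernel on $\R$, by exploiting the smoothness of $J_{\alpha}$ away from $0$ and the absolutely convergent definition of $\partial^{\alpha}J_{\alpha}$ off the origin, together with the size/cancellation inherited from the multiplier $\mu_{\alpha}$. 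Then the classical Cotlar inequality gives $\partial^{\alpha,*}f(t) \lesssim M(\partial^{\alpha}f)(t) + Mg(t)$ pointwise, where $M$ is the Hardy--Littlewood maximal operator; both right-hand terms are $L^{p}$-controlled by $\|f\|_{p,\alpha}$. The usual oscillation argument (on the dense class $\mathcal{S}(\R)$, $\partial^{\alpha,\varepsilon}f$ converges pointwise everywhere, and on $S^{p}_{\alpha}(\R)$ approximate by Schwartz in the $\|\cdot\|_{p,\alpha}$-norm and apply the maximal bound with Markov's inequality) then yields convergence almost everywhere. Verifying the requisite CZ estimates on $K_{\alpha}$ — essentially, $|K_{\alpha}(r)| \lesssim |r|^{-1}$ and $|K_{\alpha}'(r)| \lesssim |r|^{-2}$ with appropriate cancellation — is the technically heaviest step and is where most of the work lies.
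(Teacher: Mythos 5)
This statement is not proved in the paper at all: it is imported verbatim from Wheeden's article (the citation \cite{MR0240682}, Theorems 1 and 3), so there is no in-paper argument to compare yours against. Judged on its own terms, your outline is sound and is in fact close in spirit to how such results are proved: writing $f = g \ast J_{\alpha}$ and transferring everything to $g$ is the right move. The multiplier computation is correct ($m_{\varepsilon}(\tau) = |\tau|^{\alpha}\phi(\varepsilon|\tau|)$ with $\phi$ and $\eta\phi'(\eta)$ bounded, times the Mikhlin symbol $|\tau|^{\alpha}(1+4\pi^{2}\tau^{2})^{-\alpha/2}$), and it does give \eqref{form10} with a constant uniform in $\varepsilon$. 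The $L^{p}$-convergence via the second-difference bound on $\mathcal{S}(\R)$ plus the $\varepsilon/3$ density argument is also complete; your justification that $\mathcal{S}(\R) \cap S^{p}_{\alpha}(\R)$ is dense is correct.

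The one place where you should be careful is the a.e.\ step, which you rightly identify as the heavy part but leave as a program. Two concrete items remain. First, the kernel bounds $|K_{\alpha}(x)| \lesssim |x|^{-1}$, $|K_{\alpha}'(x)| \lesssim |x|^{-2}$ for $K_{\alpha} = \partial^{\alpha}J_{\alpha}$ must be verified by hand; this requires splitting the defining integral according to whether $x + r$ is near the singularity of $J_{\alpha}$ at the origin, and is not an off-the-shelf fact. Second, and more importantly, your truncations are \emph{not} the standard Calder\'on--Zygmund truncations: $\partial^{\alpha,\varepsilon}f = g \ast K_{\alpha}^{\varepsilon}$ with $K_{\alpha}^{\varepsilon} = \partial^{\alpha,\varepsilon}J_{\alpha}$, whereas Cotlar's inequality controls $\sup_{\varepsilon}|g \ast (K_{\alpha}\chi_{\{|x|>\varepsilon\}})|$. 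You must therefore also show that the difference kernel $K_{\alpha}^{\varepsilon} - K_{\alpha}\chi_{\{|x|>\varepsilon\}}$ is dominated by $\varepsilon^{-1}\psi(\cdot/\varepsilon)$ for a fixed integrable radially decreasing $\psi$, so that its contribution is controlled by $Mg$. Neither item should fail, but as written the a.e.\ assertion is not yet proved; if you do not want to carry out this work, the honest alternative is to do exactly what the paper does and cite Wheeden for it.
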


We can then prove Proposition \ref{main2Technical}.

\begin{proof}[Proof of Proposition \ref{main2Technical}] Fix $0 < \alpha < 1$, $1 < p < \infty$, and $f \in S^{p}_{2\alpha}(\He^{n})$. By Theorem \ref{main}, it follows that $f \in V^{p}_{\alpha}(\He^{n})$, hence $f_{z} \in S^{p}_{\alpha}(\R)$ for a.e. $z \in \R^{2n}$. By Wheeden's theorem, and the relation,
\begin{equation}\label{form3} T^{\alpha,\varepsilon}f(z,t) = \partial^{\alpha,\varepsilon}f_{z}(t), \qquad \varepsilon > 0, \end{equation}
we immediately obtain the pointwise a.e. existence of $T^{\alpha}f(z,t)$. Next, to show existence of $T^{\alpha}f$ in $L^{p}$, we infer from \eqref{form3}-\eqref{form10} and \eqref{form8} that
\begin{equation}\label{form4} \|T^{\alpha,\varepsilon}f\|_{p}^{p} = \int_{\R^{2n}} \|\partial^{\alpha,\varepsilon}f_{z}\|_{p}^{p} \, dz \lesssim_{\alpha,p} \int_{\R^{2n}} \|f_{z}\|_{p,\alpha}^{p} \, dz = \|f\|_{V^{p}_{\alpha}}^{p} \lesssim_{\alpha,p} \|f\|_{p,2\alpha}^{p}. \end{equation}
Now the existence of $T^{\alpha}f$ in $L^{p}$ follows from a standard argument, using the $L^{p}$-existence of $T^{\alpha}\varphi$ for $\varphi \in \mathcal{D}$ (see Lemma \ref{l:LpDerivForSmooth} below). Indeed, recall from Remark \ref{sobolevRemark}(b) that $\mathcal{D}$ is dense in $S^{p}_{2\alpha}(\He^{n})$. Thus, for arbitrary $\delta > 0$, there exists $\varphi \in \mathcal{D}$ with $\|f - \varphi\|_{p,2\alpha} < \delta$. Then, if $\varepsilon_{1},\varepsilon_{2} > 0$ are so small that $\|T^{\alpha,\varepsilon_{1}}\varphi - T^{\alpha,\varepsilon_{2}}\varphi\|_{p} < \delta$, we have
\begin{align*} \|T^{\alpha,\varepsilon_{1}}f - T^{\alpha,\varepsilon_{2}}f\|_{p} & \leq \|T^{\alpha,\varepsilon_{1}}(f - \varphi)\|_{p} + \|T^{\alpha,\varepsilon_{1}}\varphi - T^{\alpha,\varepsilon_{2}}\varphi\|_{p} + \|T^{\alpha,\varepsilon_{2}}(f - \varphi)\|_{p}\\
& \stackrel{\eqref{form4}}{\lesssim_{\alpha,p}} \|f - \varphi\|_{p,2\alpha} + \|T^{\alpha,\varepsilon_{1}}\varphi - T^{\alpha,\varepsilon_{2}}\varphi\|_{p} < 2\delta. \end{align*}
This means that $\{T^{\alpha,\varepsilon}f\}_{\varepsilon > 0}$ is a Cauchy sequence in $L^{p}(\He^{n})$, as claimed. \end{proof}

It remains to show the existence of $T^{\alpha}\varphi$ in $L^{p}$ for $\varphi \in \mathcal{D}$.

\begin{lemma}\label{l:LpDerivForSmooth} Let $0 < \alpha < 1$ and $1 \leq p \leq \infty$. Then $T^{\alpha}\varphi$ exists in $L^{p}$ for $\varphi \in \mathcal{D}$.
\end{lemma}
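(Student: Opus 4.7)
The plan is to show directly that $\{T^{\alpha,\varepsilon}\varphi\}_{\varepsilon > 0}$ is a Cauchy sequence in $L^{p}(\He^{n})$ by exploiting the two features of a test function $\varphi \in \mathcal{D}$: its smoothness in the $t$-variable, and its compact support. Fix $K := \operatorname{spt}(\varphi)$, which is compact, and for $0 < \varepsilon' < \varepsilon \leq 1$ consider the difference
\begin{displaymath}
(T^{\alpha,\varepsilon} - T^{\alpha,\varepsilon'})\varphi(z,t) = -\int_{\varepsilon' < |r| < \varepsilon} \frac{\varphi(z,t+r) - \varphi(z,t)}{|r|^{1+\alpha}} \, dr.
\end{displaymath}

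For the pointwise bound I would apply the one-variable mean value theorem to $s \mapsto \varphi(z,s)$, yielding $|\varphi(z,t+r) - \varphi(z,t)| \leq |r|\, \|T\varphi\|_{\infty}$. Since $0 < \alpha < 1$, the function $|r|^{-\alpha}$ is integrable near the origin, and therefore
\begin{displaymath}
|(T^{\alpha,\varepsilon} - T^{\alpha,\varepsilon'})\varphi(z,t)| \leq \|T\varphi\|_{\infty} \int_{\varepsilon' < |r| < \varepsilon} |r|^{-\alpha} \, dr \leq \frac{2\|T\varphi\|_{\infty}}{1 - \alpha}\, \varepsilon^{1-\alpha},
\end{displaymath}
with an estimate that is uniform in $(z,t) \in \He^{n}$. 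This already settles the case $p = \infty$.

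For $1 \leq p < \infty$ I would then note that the integrand vanishes identically unless $\varphi(z,t) \neq 0$ or $\varphi(z, t + r) \neq 0$ for some $|r| < \varepsilon \leq 1$. Consequently, the support of $(T^{\alpha,\varepsilon} - T^{\alpha,\varepsilon'})\varphi$ is contained in
\begin{displaymath}
K_{1} := \{(z,t) \in \He^{n} : (z, t + r) \in K \text{ for some } r \in [-1,1]\},
\end{displaymath}
which is a bounded subset of $\R^{2n + 1}$. Combining the pointwise estimate with the support information yields
\begin{displaymath}
\|(T^{\alpha,\varepsilon} - T^{\alpha,\varepsilon'})\varphi\|_{p} \leq \frac{2\|T\varphi\|_{\infty}}{1-\alpha}\,\mathcal{L}^{2n+1}(K_{1})^{1/p}\, \varepsilon^{1-\alpha},
\end{displaymath}
which tends to $0$ as $\varepsilon \to 0$. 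Hence $\{T^{\alpha,\varepsilon}\varphi\}_{\varepsilon > 0}$ is Cauchy in $L^{p}(\He^{n})$, which is exactly the condition for $T^{\alpha}\varphi$ to exist in $L^{p}$. No genuine obstacle arises: the whole argument reduces to the elementary observation that $|r|^{-\alpha}$ is integrable near zero, coupled with the compactness of the support of $\varphi$.
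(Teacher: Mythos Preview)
Your proof is correct and follows essentially the same approach as the paper: bound the difference $T^{\alpha,\varepsilon}\varphi - T^{\alpha,\varepsilon'}\varphi$ pointwise via the mean value theorem applied to $t \mapsto \varphi(z,t)$ (yielding the $L^\infty$ case from the integrability of $|r|^{-\alpha}$ near the origin), and then use the compact support of $\varphi$ to upgrade this to an $L^p$ bound for $1 \leq p < \infty$. The only cosmetic differences are that the paper phrases the support argument in terms of a cube $[-2R,2R]^{2n+1}$ rather than your set $K_1$, and handles the $L^\infty$ case by splitting into the regions inside and outside that cube, whereas your global bound $|\varphi(z,t+r)-\varphi(z,t)| \leq |r|\,\|T\varphi\|_\infty$ makes that split unnecessary.
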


\begin{proof} We begin with the case $p = \infty$. Fix $0 < \varepsilon_{1} < \varepsilon_{2} < \infty$, and note that
\begin{displaymath} |T^{\alpha,\varepsilon_{1}}\varphi(z,t) - T^{\alpha,\varepsilon_{2}}\varphi(z,t)| \leq \int_{|r| < \varepsilon_{2}} \frac{|\varphi(z,t + r) - \varphi(z,t)|}{|r|^{1 + \alpha}} \, dr =: I_{\varepsilon_{2}}(z,t). \end{displaymath}
We claim that $\|I_{\varepsilon}\|_{\infty} \lesssim_{\alpha,\varphi} \varepsilon^{1 - \alpha}$ for all $\varepsilon > 0$ small enough, which implies, by the estimate above, that $\{T^{\alpha,\varepsilon}\varphi\}_{\varepsilon > 0}$ is a Cauchy sequence in $L^{\infty}$. To prove this, let $R > 0$ be such that $\spt \varphi \subset [-R,R]^{2n + 1}$. Let $0 < \varepsilon < R$. First, if $(z,t) \in \R^{2n + 1} \setminus [-2R,2R]^{2n + 1}$, it is clear that $I_{\varepsilon}(z,t) = 0$, since $(z,t + r) \notin \spt \varphi$ for $|r| < \varepsilon$. On the other hand, for $(z,t) \in [-2R,2R]^{2n + 1}$, the mean value theorem gives the estimate
\begin{equation}\label{form57} I_{\varepsilon}(z,t) \leq \|T\varphi\|_{L^{\infty}([-3R,3R]^{2n + 1})} \int_{ |r| < \varepsilon} \frac{dr}{|r|^{\alpha}} \lesssim_{\alpha,\varphi} \varepsilon^{1 - \alpha}, \qquad 0 < \varepsilon < R.  \end{equation}
This completes the proof of the case $p = \infty$. The case $p = 1$ follows from this estimate, and the observation that $\spt I_{\varepsilon} \subset [-2R,2R]^{2n + 1}$ for $0 < \varepsilon < R$. Indeed,
\begin{displaymath} \|T^{\alpha,\varepsilon_{1}}\varphi - T^{\alpha,\varepsilon_{2}}\varphi\|_{1} \leq \int_{[-2R,2R]^{2n + 1}} I_{\varepsilon_{2}}(z,t) \, dz \, dt \lesssim_{\alpha,\varphi} \varepsilon_{2}^{1 - \alpha}, \end{displaymath}
which implies that $\{T^{\alpha,\varepsilon}\varphi\}_{\varepsilon > 0}$ is a Cauchy sequence in $L^{1}(\He^{n})$. The cases $1 < p < \infty$ can, finally, be deduced from the convexity of $L^{p}$-norms, or proven directly using the argument above.  \end{proof}

\subsection{The case of bounded Lipschitz functions}\label{WinftySection}

In this section, we consider functions in
\begin{displaymath} W^{1,\infty}(\He^{n}) = \{f \colon \He^{n} \to \R : f \text{ is bounded and Lipschitz}\}. \end{displaymath}
Comparing with Proposition \ref{main2Technical}, and viewing $W^{1,\infty}(\He^{n})$ as the "$p = \infty$" endpoint of the scale $\{S^{p}_{1}(\He^{n})\}_{1 < p < \infty} = \{W^{1,p}(\He^{n})\}_{1 < p < \infty}$, one might guess that $T^{1/2}f \in L^{\infty}(\He^{n})$. This is false: for $f(x) = \min\{\|x\|_{\He},1\}$, the $\tfrac{1}{2}$-derivative $T^{1/2}f$ has a logarithmic singularity at $0$, see Example \ref{ex:unbounded} below for details. A possible explanation is that $W^{1,\infty}(\He^{n})$ is not the "right" endpoint of $\{S^{p}_{1}(\He^{n})\}_{1 < p < \infty}$, and one should rather consider the space $S^{\infty}_{1}(\He^{n})$ discussed in \cite[Definition 4.4.2]{MR3469687} -- and more generally $S^{\infty}_{2\alpha}(\He^{n})$ for $0 < \alpha < 1$. We do not know if
\begin{equation}\label{form58} T^{\alpha}(S_{2\alpha}^{\infty}(\He^{n})) \subset L^{\infty}(\He^{n}). \end{equation}
Corollary \ref{t:main} shows that $T^{1/2}(W^{1,\infty}(\He^{n})) \subset \bmo(\He^{n})$. Here is a precise statement:
\begin{cor}\label{Winfty} Let $f \in W^{1,\infty}(\He)$. Then $T^{1/2}f$ exists a.e. and in the distributional sense, and
\begin{equation}\label{bmoIneq} \|T^{1/2}f\|_{\bmo} \lesssim \textup{Lip}(f). \end{equation}
\end{cor}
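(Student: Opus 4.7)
My plan is to fix a Korányi ball $B = B(x_{0},R)$, split $f$ into a localised part $f_{1}$ handled by Proposition \ref{main2Technical} and a far part $f_{2}$ whose oscillation on $B$ is controlled by the isometry of $d$ under right translation by the central elements $(0,r) \in \He^{n}$, and then combine the two pieces. Left translations are $d$-isometries, and a short Fourier computation gives $T^{1/2}[f \circ L_{x_{0}} \circ \delta_{R}] = R \cdot (T^{1/2}f) \circ L_{x_{0}} \circ \delta_{R}$; since $\textup{Lip}$ also scales by $R$ under $\delta_R$ and the $\bmo$-seminorm is scale invariant, I may assume $B = B(0,1)$.

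Replacing $f$ by $f - f(0)$ (which changes neither $T^{1/2}f$ nor $\textup{Lip}(f)$) ensures $|f(y)| \leq \textup{Lip}(f) \cdot d(y,0)$, so in particular $|f| \lesssim \textup{Lip}(f)$ on $3B$. Fix $\eta \in \mathcal{D}$ with $\eta \equiv 1$ on $2B$, $\mathrm{spt}\,\eta \subset 3B$, and $\|\nabla_{\He}\eta\|_{\infty} \lesssim 1$, and write $f = f_{1} + f_{2}$ with $f_{1} := f\eta$ and $f_{2} := f(1-\eta)$. The \emph{local piece} $f_{1}$ is supported in $3B$ with $|f_{1}|, |\nabla_{\He}f_{1}| \lesssim \textup{Lip}(f)$, hence $f_{1} \in W^{1,2}(\He^{n}) = S^{2}_{1}(\He^{n})$ with $\|f_{1}\|_{2,1} \lesssim \textup{Lip}(f)$ by the equivalence \eqref{W2Equivalence}. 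Proposition \ref{main2Technical} (applied with $p = 2$, $\alpha = 1/2$) then yields the pointwise a.e.\ limit $T^{1/2}f_{1}$ with $\|T^{1/2}f_{1}\|_{2} \lesssim \textup{Lip}(f)$, and Cauchy--Schwarz gives $\fint_{B}|T^{1/2}f_{1}| \lesssim \textup{Lip}(f)$.

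For the \emph{far piece}, $f_{2}$ vanishes on $2B$, so for $y = (w,s) \in B$ we have $f_{2}(y) = 0$, and $f_{2}(w, s+r) = 0$ for all $|r|$ below some absolute constant $c_{0} > 0$. Since $f_{2} \in L^{\infty}(\He^{n})$, the truncations $T^{1/2,\varepsilon}f_{2}(y)$ agree for every $\varepsilon < c_{0}$ with the absolutely convergent integral $\int_{|r|>c_{0}} f_{2}(w,s+r)|r|^{-3/2}\, dr$, which gives pointwise existence of $T^{1/2}f_{2}$ on $B$. The oscillation is then controlled by the crucial geometric fact that right translation by $(0,r)$ is a $d$-isometry, so $d((w, s+r),(w', s'+r)) = d(y,y') \leq 2$ for $y,y' \in B$; combined with a direct product-rule check that $\textup{Lip}(f_{2}) \lesssim \textup{Lip}(f)$ (using $|f(y')| \lesssim \textup{Lip}(f) \cdot d(y',0)$ to absorb the problematic $f \cdot \nabla_{\He}\eta$ term without invoking $\|f\|_{\infty}$), this yields
\begin{displaymath}
|T^{1/2}f_{2}(y) - T^{1/2}f_{2}(y')| \leq \textup{Lip}(f_{2}) \cdot d(y,y') \int_{|r|>c_{0}}|r|^{-3/2}\, dr \lesssim \textup{Lip}(f).
\end{displaymath}

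Adding contributions, $T^{1/2}f = T^{1/2}f_{1} + T^{1/2}f_{2}$ a.e.\ on $B$, and choosing $c := T^{1/2}f_{2}(y^{*})$ for any $y^{*} \in B$ where $T^{1/2}f_{1}(y^{*})$ is defined gives $\fint_{B}|T^{1/2}f - c| \lesssim \textup{Lip}(f)$. Covering $\He^{n}$ by countably many unit balls then delivers a.e.\ existence globally and \eqref{bmoIneq}, while the distributional identity \eqref{form2} follows by passing to the limit $\varepsilon \to 0$ in $\int T^{1/2,\varepsilon}f \cdot \varphi = \int f \cdot T^{1/2,\varepsilon}\varphi$ (an elementary Fubini identity, valid for $f \in L^{\infty}$ since the kernel is even in $r$): Lemma \ref{l:LpDerivForSmooth} handles the right-hand side, and on the left the split provides $L^{2}$-convergence of the local part together with $\varepsilon$-independence of the far part on $\mathrm{spt}\,\varphi$. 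The main obstacle, I anticipate, is ensuring that the final estimate depends only on $\textup{Lip}(f)$ and not on $\|f\|_{\infty}$: pointwise existence of $T^{1/2}f_{2}$ genuinely uses $f \in L^{\infty}$, whereas the oscillation bound must rest solely on the Lipschitz seminorm together with the central-element isometry of $d$.
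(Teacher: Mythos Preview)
Your proposal is correct and follows essentially the same approach as the paper's own proof: reduce to the unit ball by left translation and dilation, split $f$ using a cutoff $\eta$ with $\chi_{2B}\leq\eta\leq\chi_{3B}$, handle the local piece via $W^{1,2}=S^{2}_{1}$ and Proposition~\ref{main2Technical}, and control the far piece by noting that the truncations stabilise on $B$ and that vertical translation by central elements is a $d$-isometry. The only cosmetic differences are that the paper fixes the comparison point for $T^{1/2}f_{2}$ at the origin (rather than a generic $y^{*}\in B$) and applies $\textup{Lip}(f)=1$ directly to $f(1-\psi)$ rather than first recording $\textup{Lip}(f_{2})\lesssim\textup{Lip}(f)$; your version is arguably cleaner on this last point.
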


\begin{remark} Corollary \ref{Winfty} leaves open whether $T^{1/2}f$ \emph{exists in $\bmo$} in the sense that
\begin{displaymath} \{T^{1/2,\varepsilon}f\}_{\varepsilon > 0} \subset \bmo(\He^{n}) \end{displaymath}
is a Cauchy sequence in the $\bmo$ norm. We do not know if this is true; the proof below would only imply that $\{T^{1/2,\varepsilon}f\}_{\varepsilon > 0}$ is a bounded sequence in $\bmo$. Yet another open question is the generalisation of Corollary \ref{Winfty} for $\alpha \in (0,1) \setminus \{\tfrac{1}{2}\}$. One could conjecture \eqref{form58}, or perhaps the same with $\bmo(\He^{n})$ on the right hand side; it seems that the weaker conclusion would follow from our proof if $S^{\infty}_{2\alpha}(\He^{n}) \subset \mathcal{C}^{0,2\alpha}(\He^{n})$, but we do not know if this is true. On the other hand, our proof does \textbf{not} imply that $T^{\alpha}$ maps bounded functions in $\mathcal{C}^{0,2\alpha}(\He^{n})$ to $\bmo(\He^{n})$; neither do we have a counterexample. \end{remark}

\begin{proof}[Proof of Corollary \ref{Winfty}] We will use without further mention that Lipschitz functions have a pointwise a.e. defined horizontal gradients in $L^{\infty}(\He^{n})$, see \cite[Proposition 6.12]{MR2312336}; this is a genuinely easier statement than the Pansu-Rademacher theorem.

To establish, first, the pointwise existence of $T^{1/2}f$, we fix a ball $B \subset \He^{n}$, and then a function $\psi = \psi_{B} \in \mathcal{D}$ with $\chi_{2B} \leq \psi \leq \chi_{3B}$. For $(z,t) \in B$ and $\varepsilon > 0$ fixed, we write
\begin{equation}\label{form47} T^{1/2,\varepsilon}f(z,t) = T^{1/2,\varepsilon}(\psi f)(z,t) + T^{1/2,\varepsilon}([1 - \psi]f)(z,t). \end{equation}
Here certainly $\psi f \in W^{1,2}(\He^{n}) = S^{2}_{1}(\He^{n})$, so
\begin{equation}\label{form45} T^{1/2}(\psi f)(z,t) \text{ exists for a.e. } (z,t) \in B \end{equation}
by Proposition \ref{main2Technical}; moreover $T^{1/2}(\psi f) \in L^{2}(\He)$, and $T^{1/2,\varepsilon}(\psi f) \to T^{1/2}(\psi f)$ in $L^{2}(\He^{n})$ as $\varepsilon \to 0$. On the other hand, if $(z,t) \in B$ is arbitrary, we note that $([1 - \psi])f(z,t) = 0 = ([1 - \psi]f)(z,t + r)$ for all $0 < |r| < \delta$, where $\delta = \delta_{B} > 0$, since $(1 - \psi)$ vanishes on a neighbourhood of $B$. Thus,
\begin{equation}\label{form48} T^{1/2,\varepsilon}([1 - \psi]f)(z,t) = \int_{|r| \geq \delta} \frac{([1 - \psi]f)(z,t + r) - ([1 - \psi]f)(z,t)}{|r|^{3/2}} \, dr \end{equation}
for all $(z,t) \in B$ and $0 < \varepsilon < \delta$, and in particular the limit, as $\varepsilon \to 0$, exists. We also point out that $T^{1/2}([1 - \psi]f) \in L^{\infty}(B)$. Combined with \eqref{form45}, this establishes the existence of $T^{1/2}f(z,t)$ for a.e. $(z,t) \in B$. But since $B$ was arbitrary (and the value of $T^{1/2}f(z,t)$ is independent of the choice of $B$, when it exists), we have shown that $T^{1/2}f(z,t)$ exists for a.e. $(z,t) \in \He^{n}$. We also remark that the previous argument gives
\begin{equation}\label{form46} \|T^{1/2,\varepsilon}f - T^{1/2}f\|_{L^{2}(B)} \to 0 \end{equation}
for any fixed ball $B \subset \He^{n}$; indeed, performing the decomposition \eqref{form47} relative to $B$, we infer from \eqref{form48} that
\begin{displaymath} \|T^{1/2,\varepsilon}f - T^{1/2}f\|_{L^{2}(B)} = \|T^{1/2,\varepsilon}(\psi f) - T^{1/2}(\psi f)\|_{L^{2}(B)}  \end{displaymath}
for $0 < \varepsilon < \delta$. Therefore, \eqref{form46} follows from the $L^{2}$-convergence $T^{1/2,\varepsilon}(\psi f) \to T^{1/2}(\psi f)$. Of course, a corollary of \eqref{form46} is that $T^{1/2}f \in L^{2}_{\mathrm{loc}}(\He^{n}) \subset L^{1}_{\mathrm{loc}}(\He^{n})$, which is needed to make sense of various integrals below.

We next prove \eqref{bmoIneq}. We need to show that
\begin{displaymath} \inf_{c \in \R} \frac{1}{r^{2n + 2}} \int_{B(x_{0},r)} |T^{1/2}f(x) - c| \, dx \lesssim \textup{Lip}(f), \qquad x_{0} \in \He, \: r > 0. \end{displaymath}
We may evidently assume that $x_{0} = 0$, $f(0) = 0$, and $\textup{Lip}(f) > 0$ (since $T^{1/2}$ annihilates constants). Replacing $f$ by $g = f/\textup{Lip}(f)$ if necessary, we may assume that $\textup{Lip}(f) = 1$. Moreover, one easily checks that
\begin{displaymath} (T^{1/2}f)(\delta_{r}(x)) = T^{1/2}f_{r}(x) \end{displaymath}
almost everywhere, where $f_{r} = \tfrac{1}{r}[f \circ \delta_{r}]$. Observing moreover that $\textup{Lip}(f_{r}) = \textup{Lip}(f) = 1$ for $r > 0$, and
\begin{displaymath} \frac{1}{r^{2n + 2}} \int_{B(0,r)} |T^{1/2}f(x) - c| \, dx = \int_{B(0,1)} |(T^{1/2}f)(\delta_{r}x) - c| \, dx = \int_{B(0,1)} |T^{1/2}f_{r}(x) - c| \, dx, \end{displaymath}
we conclude that it suffices to consider $r = 1$. We then write $B := B(0,1)$.

As before, fix a function $\psi \in \calD$ with $\chi_{2B} \leq \psi \leq \chi_{3B}$, and a constant $c \in \R$. Then,
\begin{displaymath} \int_{B} |T^{1/2}f(x) - c| \, dx \leq \int_{B} |T^{1/2}(\psi f)| \, dx + \int_{B} |T^{1/2}[f(1 - \psi)](x) - c| \, dx =: I_{1} + I_{2}(c). \end{displaymath}
To handle $I_{1}$, we use Cauchy-Schwarz, then \eqref{form8}, and finally \eqref{W2Equivalence}:
\begin{displaymath} I_{1} \lesssim \left(\int_{\He^{n}} |T^{1/2}(\psi f)(x)|^{2} \, dx \right)^{1/2} \stackrel{(\ast)}{\lesssim} \|\psi f\|_{V^{2}_{1/2}} \lesssim \|\psi f\|_{2} + \|\nabla_{\He} (\psi f)\|_{2} \lesssim 1. \end{displaymath}
In the last inequality we also used the assumption $f(0) = 0$. The estimate $(\ast)$ can be seen rigorously in various ways; one is to use the $L^{2}$-existence of $T^{1/2}(\psi f)$ and write the left hand side as the limit of the numbers $\|T^{1/2,\varepsilon}(\psi f)\|_{2}$. Then, one can safely use Fubini's theorem in $\He^{n} = \R^{2n} \times \R$, and finally apply \eqref{form10} to each $(\psi f)_{z}$, $z \in \R^{2n}$.

To estimate $I_{2}(c)$, we first record that
\begin{displaymath} [f(1 - \psi)](z,t + s) = 0, \qquad (z,t) \in B, \: 0 \leq |s| < \delta,  \end{displaymath}
recalling that $\chi_{2B} \leq \psi \leq 1$; here $\delta > 0$ is a constant depending only on the ambient dimension. Therefore,
\begin{displaymath} T^{1/2}[f(1 - \psi)](z,t) = \int_{|s| \geq \delta} \frac{[f(1 - \psi)](z,t + s)}{|s|^{3/2}} \, ds, \end{displaymath}
which suggests the definition
\begin{displaymath} c := \int_{|s| \geq \delta} \frac{[f(1 - \psi)](\bar{0},s)}{|s|^{3/2}} \, ds.  \end{displaymath}
Consequently,
\begin{displaymath} T^{1/2}[f(1 - \psi)](z,t) - c = \int_{|s| \geq \delta} \frac{[f(1 - \psi)](z,t + s) - [f(1 - \psi)](\bar{0},s)}{|s|^{3/2}} \, ds. \end{displaymath}
Finally, recalling that $\textup{Lip}(f) = 1$, we estimate
\begin{displaymath} |[f(1 - \psi)](z,t + s) - [f(1 - \psi)](\bar{0},s)| \leq d((z,t + s),(\bar{0},s)) = \|(z,t)\|_{\He} \leq 1 \end{displaymath}
for $(z,t) \in B$. It follows that
\begin{displaymath} I_{2}(c) \lesssim \int_{B} \int_{|s| \geq \delta} \frac{ds}{|s|^{3/2}} \, dx \sim 1, \end{displaymath}
and the proof of \eqref{bmoIneq} is complete.

The fact that $T^{1/2}f$ is a derivative in the distributional sense follows from \eqref{form46} and the proof of Proposition \ref{p:fromLpToDist}. Namely, fix $\varphi \in \mathcal{D}$. Since the support of $\varphi$ is contained in some ball $B \subset \He^{n}$, and \eqref{form46} holds, one can first write
\begin{displaymath} \int_{\He^{n}} \varphi \cdot T^{1/2}f = \int_{B} \varphi \cdot T^{1/2}f = \lim_{\varepsilon \to 0} \int_{\He^{n}} \varphi \cdot T^{1/2,\varepsilon}f. \end{displaymath}
Then, exchanging the order of integration as in the proof of Proposition \ref{p:fromLpToDist}, one finds that
\begin{displaymath} \int_{\He^{n}} \varphi \cdot T^{1/2}f = \lim_{\varepsilon \to 0} \int_{\He^{n}} f \cdot T^{1/2,\varepsilon}\varphi = \int_{\He^{n}} f \cdot T^{1/2}\varphi. \end{displaymath}
The last equation used $f \in L^{\infty}(\He^{n})$, and that $T^{1/2}\varphi$ exists in $L^{1}(\He^{n})$ by Lemma \ref{l:LpDerivForSmooth}. The proof of Corollary \ref{Winfty} is complete.  \end{proof}

\subsection{A counterexample for $T^{1/2}f \in L^{\infty}(\He^{n})$} We conclude the section by showing that the vertical $\tfrac{1}{2}$-derivative of $f_{1}(x) := \min\{\|x\|_{\He},1\}$ has a singularity at $0$. For convenience, we prove the same for $f(x) = \min\{\|x\|_{\He},3\}$, but the statement for $f_{1}$ could be obtained by precomposing $f$ with a suitable dilation.

\begin{ex} \label{ex:unbounded}
Consider the function $f \in W^{1,\infty}(\He^{n})$ defined by $f(x) = \min\{\|x\|_{\He},3\}$.  For illustrative purposes, we first note that $T^{1/2}f(0,0)$ does not exist. Indeed, we have
\begin{displaymath}
\int_{|r|>\varepsilon}\frac{f(0,r)-f(0,0)}{|r|^{3/2}} \,dr = \int_{|r| > \varepsilon}\frac{\min\{2|r|^{1/2},3\}}{|r|^{3/2}}\;dr \to \infty
\end{displaymath}
as $\varepsilon \to 0$. More is true: $T^{1/2}f$ is not essentially bounded in the domain
\begin{displaymath}
\Omega:=\{(z,t)\in B(0,1):\; t> 0,\; |z|^4<16 t^2\}.
\end{displaymath}
To see this, we will derive the following lower bound
\begin{equation}\label{eq:integral}
\left|\int_{\mathbb{R}} \frac{f(z,t+r)-f(z,t)}{|r|^{3/2}}\;\mathrm{d}r \right| \geq \max\left\{\frac{1}{2}\ln \left(\frac{t + 1}{4t}\right)-10,0\right\}, \qquad (z,t) \in \Omega.
\end{equation}
(The integral in \eqref{eq:integral} converges absolutely, since $s\mapsto f_{z}(t+s)$ is Lipschitz for $t\neq 0$.)

\medskip

To prove \eqref{eq:integral}, we first consider those values of $r$ for which the integrand in \eqref{eq:integral} is negative. Since $(z,t)\in \Omega \subset B(0,3)$, we have $f(z,t) = \|(z,t)\|_{\mathbb{H}}$. If $r\in \mathbb{R}$ is such that $(z,t+r)\notin B(0,3)$, then $f(z,t+r)-f(z,t)\geq 3 -\|(z,t)\|_{\mathbb{H}} \geq 0$, so it suffices to consider those $r$ where $(z,t+r)\in B(0,3)$ and
\begin{displaymath}
f(z,t+r)-f(z,t)= \sqrt[4]{|z|^4 + 16(t+r)^2}-\sqrt[4]{|z|^4 + 16t^2}.
\end{displaymath}
This last expression is negative exactly for $r\in (-2t,0)$ and clearly $(z,t)\in \Omega$ implies that $(z,t+r)\in B(0,3)$ for such $r$.
We first show that
\begin{equation}\label{eq:integral2}
\left|\int_{-2t}^0 \frac{f(z,t+r)-f(z,t)}{|r|^{3/2}}\;\mathrm{d}r \right|=
\left|\int_{-2t}^0 \frac{\|(z,t+r)\|_{\mathbb{H}}-\|(z,t)\|_{\mathbb{H}}}{|r|^{3/2}}\;\mathrm{d}r \right|
 \leq 10
\end{equation}
for $(z,t) \in \Omega$. By the mean value theorem, there exists for every $(z,t)\in \Omega$ and all $r\in \mathbb{R}$ a number $\tau \in [t,t + r]$ such that
\begin{equation}\label{eq:meanValue}
\sqrt[4]{|z|^2 + 16(t+r)^2}-\sqrt[4]{|z|^2 + 16t^2} =  \frac{8\tau r}{\|(z,\tau)\|_{\mathbb{H}}^3}.
\end{equation}
We split the domain of integration in \eqref{eq:integral2} in  two intervals $(-2t,-t/2)$ and $(-t/2,0)$.
In the range $r\in (-t/2,0)$, we apply \eqref{eq:meanValue} as follows:
\begin{align*}
0\geq \int_{-t/2}^{0} \frac{\|(z,t+r)\|_{\mathbb{H}}-\|(z,t)\|_{\mathbb{H}}}{|r|^{3/2}}\;dr &\geq
\frac{ 8t}{\|(z,t/2)\|^3_{\mathbb{H}}}\int_{-t/2}^0 \frac{r}{(-r)^{3/2}}\,dr
= \frac{ - 32 \cdot t^{3/2}}{\|(z,t)\|^3_{\mathbb{H}}}.
\end{align*}
Therefore,
\begin{displaymath}
\left|\int_{-t/2}^{0} \frac{\|(z,t+r)\|_{\mathbb{H}}-\|(z,t)\|_{\mathbb{H}}}{|r|^{3/2}}\;dr \right| \leq 4.
\end{displaymath}
Next, using the $\frac{1}{2}$-H\"older continuity of $s \mapsto \|(z,t+s)\|_{\mathbb{H}}$, we find
\begin{align*}
\left|\int_{-2t}^{-t/2} \frac{\|(z,t+r)\|_{\mathbb{H}}-\|(z,t)\|_{\mathbb{H}}}{|r|^{3/2}}\;dr \right| \leq 2 \int_{-2t}^{-\frac{t}{2}} \frac{|r|^{1/2}}{|r|^{3/2}}\;dr\leq 6.
\end{align*}
Thus we have established \eqref{eq:integral2}.

For $r\notin (-2t,0)$, the integrand in \eqref{eq:integral} is nonnegative. Thus,
\begin{displaymath}
|T^{1/2} f(z,t)| \geq \int_{3t}^{1} \frac{f(z,t+r)-f(z)}{|r|^{3/2}}\;dr  - 10, \qquad (z,t) \in \Omega, \end{displaymath}
by \eqref{eq:integral2}. For $(z,t)\in \Omega$ and $r\in (3t,1)$, we note that
\begin{displaymath}
|z|^4 + 16 (t+r)^2 \leq |z|^4 + 16 t^2 + 24\leq 25,
\end{displaymath}
and hence $(z,t+r) \in B(0,3)$ and $f(z,t+r)= \|(z,t+r)\|_{\mathbb{H}}$. We apply again the mean value theorem to find $\tau \in [t,t+r]$ such that \eqref{eq:meanValue} holds. Since $|z|^4 \leq 16 t^2$ for $(z,t) \in \Omega$, and since $t^2 \leq \tau^2$, it follows that $\|(z,\tau)\|_{\mathbb{H}} \leq 2^{5/4} \tau^{1/2}$ and thus
\begin{displaymath}
\sqrt[4]{|z|^2 + 16(t+r)^2}-\sqrt[4]{|z|^2 + 16t^2} \geq  \frac{r}{2\tau^{1/2}} \geq \frac{r}{2(t+r)^{1/2}}, \qquad {3t < r < 1}.
\end{displaymath}
Hence
\begin{align*}
\int_{3t}^{1} \frac{\|(z,t+r)\|_{\mathbb{H}}-\|(z,t)\|_{\mathbb{H}}}{|r|^{3/2}}\;dr
&\geq \frac{1}{2}\int_{3t}^{1} \frac{1}{(t+r)^{1/2}} \frac{1}{r^{1/2}}\;dr\\
&\geq \frac{1}{2} \int_{3t}^{1} \frac{1}{(t+r)} \;dr =\frac{1}{2} \ln \left(\frac{t + 1}{4t} \right),
\end{align*}
which completes the proof of \eqref{eq:integral}. Since the lower bound in \eqref{eq:integral} is not bounded for $(z,t) \in \Omega$, we see that $T^{1/2}f\notin L^{\infty}(\mathbb{H}^n)$.
\end{ex}

\section{Vertical versus horizontal Poincar\'{e} inequalities}\label{s:HorizVert}

In this section, we prove the (fractional) \emph{vertical versus horizontal Poincar\'{e} inequality} \eqref{LNFrac}. For brevity of notation, we first introduce the following seminorms.

\begin{definition} Let $1 \leq p, q < \infty$, and let $0<\alpha <1$. For $\varphi \in \mathcal{S}$, we define
\begin{equation}\label{LNnorm} \|\varphi\|_{\mathbf{V}_{\alpha}^{p,q}} := \left( \int_{\R} \left( \int_{\He^n} \left(\frac{|\varphi(x \cdot (0,0,s)) - \varphi(x)|}{|s|^{\alpha}} \right)^{p} \, dx \right)^{q/p} \, \frac{ds}{|s|} \right)^{1/q}. \end{equation}
\end{definition}

\begin{definition} Let $1 < p < \infty$ and $\alpha > 0$. For $\varphi \in S^p_{\alpha}(\mathbb{H}^n)$, we define
\begin{displaymath} \|\varphi\|_{\dot{S}^{p}_{\alpha}} := \|(-\bigtriangleup_{p})^{\alpha/2}\varphi\|_{p}. \end{displaymath}
\end{definition}
This is a homogeneous counterpart for the $\|\cdot\|_{p,\alpha}$-norm in Definition \ref{d:Sobolev}. With these definitions in place, we have the following inequalities:

\begin{thm}\label{main_LN_section} Let $q \geq 2$, $1 < p \leq q < \infty$, $0 < \alpha < 1$, and $\varphi \in S^{p}_{2\alpha}(\He^{n})$. Then,
\begin{equation}\label{ineq} \|\varphi\|_{\mathbf{V}^{p,q}_{\alpha}} \lesssim_{\alpha,p,q} \|\varphi\|_{V^{p}_{\alpha}}\lesssim_{p} \|\varphi\|_{p,2\alpha}. \end{equation}
Moreover, the following inequality holds for the homogeneous norm $\|\cdot\|_{\dot{S}_{2\alpha}^{p}}$:
\begin{equation}\label{ineq_homog} \|\varphi\|_{\mathbf{V}^{p,q}_{\alpha}} \lesssim_{p,q} \|\varphi\|_{\dot{S}_{2\alpha}^{p}},
\qquad q\geq 2, 1 < p \leq q < \infty, \: 0 < \alpha < 1. \end{equation}
\end{thm}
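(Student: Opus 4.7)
The plan is to reduce the theorem to a one-dimensional statement along vertical fibers. The second inequality in \eqref{ineq}, $\|\varphi\|_{V^p_\alpha} \lesssim \|\varphi\|_{p,2\alpha}$, is precisely Theorem \ref{thm1}, so the only new content of \eqref{ineq} is the estimate
\begin{equation*}
\|\varphi\|_{\mathbf{V}^{p,q}_\alpha} \lesssim_{\alpha,p,q} \|\varphi\|_{V^p_\alpha}.
\end{equation*}
Since $x \cdot (0,0,s) = (z,t+s)$ for $x = (z,t)$, the defining integral \eqref{LNnorm} is genuinely a mixed-norm Besov quantity in the vertical variable, fibered over $z$:
\begin{equation*}
\|\varphi\|_{\mathbf{V}^{p,q}_\alpha}^p = \left\| s \mapsto \int_{\R^{2n}} \frac{\|\varphi_z(\cdot+s) - \varphi_z\|_{L^p(\R)}^p}{|s|^{\alpha p}}\, dz \right\|_{L^{q/p}(\R;\, ds/|s|)}.
\end{equation*}

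To estimate this, I would first apply Minkowski's integral inequality. The hypothesis $p \leq q$ gives $q/p \geq 1$, so $dz$-integration can be pulled outside the outer $L^{q/p}$-norm:
\begin{equation*}
\|\varphi\|_{\mathbf{V}^{p,q}_\alpha}^p \leq \int_{\R^{2n}} \left( \int_\R \frac{\|\varphi_z(\cdot+s) - \varphi_z\|_{L^p(\R)}^q}{|s|^{1+\alpha q}}\, ds \right)^{p/q} dz.
\end{equation*}
The inner integral is the $p$-th power of the one-dimensional homogeneous Besov seminorm $\|\varphi_z\|_{\dot B^{\alpha}_{p,q}(\R)}$. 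I would then invoke the Euclidean embedding $S^p_\alpha(\R) \hookrightarrow B^\alpha_{p,q}(\R)$, valid exactly when $q \geq \max(p,2)$ -- which matches the hypotheses $q \geq 2$ and $q \geq p$. For $p \geq 2$ this is the implication extracted from \cite[V \S3.5.2]{MR0290095} (the $(p,p)$-case already quoted in the paper's discussion of Lafforgue-Naor), extended by the trivial monotonicity $B^\alpha_{p,p} \hookrightarrow B^\alpha_{p,q}$ for $q \geq p$. For $1 < p < 2$ it rests on the Triebel-Lizorkin identification $S^p_\alpha(\R) = F^\alpha_{p,2}(\R) \hookrightarrow B^\alpha_{p,2}(\R)$, again followed by monotonicity. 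Integrating the pointwise embedding $\|\varphi_z\|_{\dot B^\alpha_{p,q}} \lesssim \|\varphi_z\|_{p,\alpha}$ in $z$ produces $\|\varphi\|_{\mathbf{V}^{p,q}_\alpha}^p \lesssim \int_{\R^{2n}} \|\varphi_z\|_{p,\alpha}^p\, dz = \|\varphi\|_{V^p_\alpha}^p$, completing \eqref{ineq}.

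For the homogeneous inequality \eqref{ineq_homog} I plan a scaling argument, exploiting the identical homogeneity of all three quantities under the Heisenberg dilations $\delta_\lambda$. Setting $\varphi_\lambda := \varphi \circ \delta_\lambda$, the identity $(-\bigtriangleup)(\varphi \circ \delta_\lambda) = \lambda^2 \big((-\bigtriangleup)\varphi\big) \circ \delta_\lambda$ together with the change of variable $r = \lambda^2 s$ in the vertical integral yields
\begin{equation*}
\|\varphi_\lambda\|_p = \lambda^{-(2n+2)/p} \|\varphi\|_p, \qquad \|\varphi_\lambda\|_{\dot S^p_{2\alpha}} = \lambda^{2\alpha - (2n+2)/p} \|\varphi\|_{\dot S^p_{2\alpha}},
\end{equation*}
and $\|\varphi_\lambda\|_{\mathbf{V}^{p,q}_\alpha} = \lambda^{2\alpha - (2n+2)/p}\|\varphi\|_{\mathbf{V}^{p,q}_\alpha}$. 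Applying \eqref{ineq} to $\varphi_\lambda$, dividing by the common factor $\lambda^{2\alpha - (2n+2)/p}$, and sending $\lambda \to \infty$ kills the $\|\varphi\|_p$-contribution (since $\alpha > 0$), leaving \eqref{ineq_homog}. The main obstacle, in my view, is not any single Heisenberg-level estimate -- Theorem \ref{thm1} has already absorbed that work -- but rather confirming the one-dimensional embedding $S^p_\alpha(\R) \hookrightarrow B^\alpha_{p,q}(\R)$ across the full range $q \geq \max(p,2)$; the $p < 2$ portion goes slightly beyond the $(p,p)$ statement in Stein explicitly quoted in the introduction.
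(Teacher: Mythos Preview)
Your proposal is correct and follows essentially the same route as the paper: Minkowski's inequality (using $p\leq q$) to pass from $\|\varphi\|_{\mathbf{V}^{p,q}_\alpha}$ to the fiberwise Besov quantity $\int_{\R^{2n}}\|\varphi_z\|_{\Lambda^{p,q}_\alpha}^{p}\,dz$, then the one-dimensional embedding $S^p_\alpha(\R)\hookrightarrow \Lambda^{p,\max\{p,2\}}_\alpha(\R)$ together with monotonicity in $q$, and finally the identical dilation argument for \eqref{ineq_homog}. Your stated obstacle for $1<p<2$ is not actually an issue: the embedding $S^p_\alpha(\R)\hookrightarrow \Lambda^{p,2}_\alpha(\R)$ is precisely \cite[Theorem~5(B), p.~155]{MR0290095}, which the paper invokes directly (so no detour through Triebel--Lizorkin spaces is needed).
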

The second inequality in \eqref{ineq} is nothing but \eqref{form8}, and \eqref{ineq_homog} will follow from \eqref{ineq} and a homogeneity argument. So, essentially the only task is the first inequality in \eqref{ineq}. For this purpose, we introduce two more auxiliary seminorms. Let $1 \leq p,q < \infty$ and $0 < \alpha < 1$. The first new seminorm is the standard \emph{Besov space $\Lambda^{p,q}_{\alpha}(\R)$} seminorm on $\R$ (see \cite[V. \S 5]{MR0290095}):
\begin{displaymath} \|\psi\|_{\Lambda_{\alpha}^{p,q}} := \left( \int_{\R} \left( \int_{\R} \left(\frac{|\psi(t + s) - \psi(t)|}{|s|^{\alpha}} \right)^{p} \, dt \right)^{q/p} \, \frac{ds}{|s|} \right)^{1/q}, \qquad \psi \in \mathcal{S}(\R). \end{displaymath}
The second seminorm is the "vertical Besov seminorm" on $\He^{n}$, defined for $\varphi \in \mathcal{S}(\He^{n})$:
\begin{displaymath} \|\varphi\|_{I(\Lambda_{\alpha}^{p,q})} := \left( \int_{\R^{2n}} \|\varphi_{z}\|_{\Lambda^{p,q}_{\alpha}}^{p} \, dz \right)^{1/p}, \qquad 1 \leq p,q < \infty, \: 0 < \alpha < 1. \end{displaymath}
The seminorms $\|\cdot\|_{I(\Lambda_{\alpha}^{p,q})}$ and $\|\cdot\|_{\mathbf{V}_{\alpha}^{p,q}}$ from \eqref{LNnorm} look quite similar, and there is indeed a simple relation between them:
\begin{proposition}\label{minkowskiProp} Let $\varphi \in \mathcal{S}(\He^{n})$. Then
\begin{displaymath} \|\varphi\|_{\mathbf{V}_{\alpha}^{p,q}} \leq \|\varphi\|_{I(\Lambda_{\alpha}^{p,q})}, \qquad 1 \leq p \leq q < \infty, \: 0<\alpha<1. \end{displaymath}
\end{proposition}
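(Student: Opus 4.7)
The key observation is that, writing $x = (z,t)$, the Heisenberg translation $x \cdot (0,0,s) = (z, t+s)$ (since the symplectic terms in the group law involve the horizontal part of $(0,0,s)$, which is zero). Therefore $\varphi(x \cdot (0,0,s)) - \varphi(x) = \varphi_{z}(t+s) - \varphi_{z}(t)$, and if we set
\begin{displaymath}
F(z,s) := \left(\int_{\R} \left(\frac{|\varphi_{z}(t+s)-\varphi_{z}(t)|}{|s|^{\alpha}}\right)^{p}\, dt\right)^{1/p}, \qquad (z,s) \in \R^{2n}\times\R,
\end{displaymath}
then both seminorms become mixed Lebesgue norms of $F$:
\begin{displaymath}
\|\varphi\|_{\mathbf{V}^{p,q}_{\alpha}} = \left(\int_{\R}\left(\int_{\R^{2n}} F(z,s)^{p}\, dz\right)^{q/p}\frac{ds}{|s|}\right)^{1/q}, \qquad \|\varphi\|_{I(\Lambda^{p,q}_{\alpha})} = \left(\int_{\R^{2n}}\left(\int_{\R} F(z,s)^{q}\,\frac{ds}{|s|}\right)^{p/q} dz\right)^{1/p}.
\end{displaymath}

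Once this rewriting is in place, the statement becomes a standard instance of Minkowski's integral inequality for mixed norms: for $p \leq q$, the mixed norm $L^{p}_{z}L^{q}_{s}$ dominates $L^{q}_{s}L^{p}_{z}$. The plan is therefore to raise both sides to the $p$-th power and apply Minkowski's integral inequality with exponent $r := q/p \geq 1$ to the non-negative function $H(z,s) := F(z,s)^{p}$, regarded in the Banach space $L^{r}(\R,ds/|s|)$: the inequality
\begin{displaymath}
\left\|\int_{\R^{2n}} H(\cdot,s)\, dz\right\|_{L^{r}(ds/|s|)} \leq \int_{\R^{2n}} \|H(z,\cdot)\|_{L^{r}(ds/|s|)}\, dz
\end{displaymath}
unpacks to exactly
\begin{displaymath}
\left(\int_{\R}\left(\int_{\R^{2n}} F^{p}\, dz\right)^{r}\frac{ds}{|s|}\right)^{1/r} \leq \int_{\R^{2n}}\left(\int_{\R} F^{pr}\,\frac{ds}{|s|}\right)^{1/r} dz,
\end{displaymath}
which, upon taking $p$-th roots (using $pr = q$), is the desired inequality.

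No substantial obstacle is expected: everything reduces to recognizing the two seminorms as iterated $L^{p}$ and $L^{q}$ norms in the "right" versus "wrong" order, and invoking Minkowski. Measurability and the use of Fubini in recasting the $\mathbf{V}^{p,q}_{\alpha}$-norm are routine because $\varphi \in \mathcal{S}(\He^{n})$, so $F$ is a nonnegative measurable function on $\R^{2n} \times \R$; the hypothesis $p \leq q$ is used exactly (and only) to ensure $r = q/p \geq 1$ so that Minkowski's integral inequality applies.
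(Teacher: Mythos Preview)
Your proof is correct and follows essentially the same approach as the paper: both recognize the two seminorms as mixed Lebesgue norms of the same nonnegative function and apply Minkowski's integral inequality with exponent $q/p \geq 1$. The only cosmetic difference is that the paper defines its auxiliary function without the outer $1/p$-th root (i.e., the paper's $F$ is your $H = F^{p}$), so your extra step of setting $H := F^{p}$ simply undoes that choice.
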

\begin{proof} We write
\begin{displaymath} F(z,s) := \int_{\R} \left(\frac{|\varphi(z,t + s) - \varphi(z,t)|}{|s|^{\alpha}} \right)^{p} \, dt \end{displaymath}
and apply Fubini's theorem and Minkowski's integral inequality:
\begin{align*} \|\varphi\|_{\mathbf{V}_{\alpha}^{p,q}} & = \left(\int_{\R} \left(\int_{\R^{2n}} F(z,s) \, dz \right)^{q/p} \, \frac{ds}{|s|} \right)^{1/q}\\
& \leq \left(\int_{\R^{2n}} \left(\int_{\R} F(z,s)^{q/p} \, \frac{ds}{|s|} \right)^{p/q} \, dz \right)^{1/p}\\
& = \left(\int_{\R^{2n}} \left(\int_{\R} \left(\int_{\R} \left(\frac{|\varphi(z,t + s) - f(z,t)|}{|s|^{\alpha}} \right)^{p} \, dt \right)^{q/p} \, \frac{ds}{|s|} \right)^{p/q} \, dz \right)^{1/p}\\
& = \left(\int_{\R^{2n}} \|\varphi_{z}\|_{\Lambda^{p,q}_{\alpha}}^{p} \, dz \right)^{1/p} = \|\varphi\|_{I(\Lambda_{\alpha}^{p,q})}. \end{align*}
This completes the proof of the proposition. \end{proof}

To proceed, we recall some continuous inclusions between Besov and Sobolev spaces on $\R$, which can be found in Stein's book \cite{MR0290095}. First, Proposition 10 on \cite[p. 153]{MR0290095} yields
\begin{equation}\label{stein1} \|\psi\|_{\Lambda^{p,q}_{\alpha}} \lesssim_{\alpha,p,q} \|\psi\|_{\Lambda_{\alpha}^{p,p}}, \qquad 0 < \alpha < 1, \: 1 \leq p \leq q. \end{equation}
In fact, only the inclusion of Besov spaces $\Lambda_{\alpha}^{p,p}(\R) \subset \Lambda_{\alpha}^{p,q}(\R)$ is mentioned in the statement of \cite[Proposition 10]{MR0290095}, but the proof yields \eqref{stein1}; simply compare \cite[(65), p. 154]{MR0290095} with \cite[(66), p. 154]{MR0290095}. Second, $S^{p}_{\alpha}(\R) \subset \Lambda_{\alpha}^{p,\max\{p,2\}}(\R)$, and indeed
\begin{equation}\label{eq:incl} \|\psi\|_{\Lambda_{\alpha}^{p,\max\{p,2\}}} \lesssim_{\alpha,p} \|\psi\|_{p,\alpha}, \qquad 0 < \alpha < 1, \: 1 < p < \infty. \end{equation}
This is the content of Theorem 5, (A)-(B), on \cite[p. 155]{MR0290095}. Again, the statement only claims the inclusion of spaces, but Stein records \eqref{eq:incl} on the first few lines of \cite[p. 157]{MR0290095}.

Combining Proposition \ref{minkowskiProp} with \eqref{stein1}-\eqref{eq:incl}, and recalling the definition of the vertical Sobolev norm $\|\varphi\|_{V^{p}_{\alpha}}$ from Definition \ref{def:verticalSobolev}, we may infer the first inequality in \eqref{ineq}: for $q \geq 2$, $1 < p \leq q < \infty$, $0 < \alpha < 1$, and $\varphi \in \mathcal{S}(\He^{n})$,
\begin{displaymath} \|\varphi\|_{\mathbf{V}_{\alpha}^{p,q}} \lesssim_{\alpha,p,q} \|\varphi\|_{I(\Lambda_{\alpha}^{p,q})} \stackrel{\eqref{stein1}}{\lesssim_{\alpha,p,q}} \|\varphi\|_{I(\Lambda_{\alpha}^{p,\max\{p,2\}})} \stackrel{\eqref{eq:incl}}{\lesssim_{\alpha,p}} \left( \int_{\R^{2n}} \|\varphi_{z}\|_{p,\alpha}^{p} \, dz \right)^{1/p} =: \|\varphi\|_{V^{p}_{\alpha}}. \end{displaymath}
The density of $\mathcal{S}(\He^{n})$ in $S^{p}_{2\alpha}(\He^{n})$, and the fact that $\|\varphi\|_{V^{p}_{\alpha}}$ is dominated by $\|\varphi\|_{p,2\alpha}$, easily yield the extension of the previous estimate for all $\varphi \in S^{p}_{2\alpha}(\He^{n})$. The proof of Theorem \ref{main_LN_section} is now reduced to verifying the inequality \eqref{ineq_homog}.

\begin{proof}[Proof of \eqref{ineq_homog}] Let $q\geq 2$, $1 < p \leq q < \infty$, and $0 < \alpha < 1$. It suffices to prove \eqref{ineq_homog} for $\varphi \in \mathcal{S}$, by another standard density argument. Fix $\varphi \in \mathcal{S}$. We infer \eqref{ineq_homog} from \eqref{ineq} by the following homogeneity argument. Set $\varphi_r:= \varphi \circ \delta_r$ for $r > 0$, and recall that the Jacobi determinant of $\delta_r$ is $r^Q$ with $Q=2n+2$. Then two applications of the change-of-variables formula  yield
\begin{align*}
\|\varphi_r\|_{\mathbf{V}^{p,q}_{\alpha}}&=  \left( \int_{\R} \left( \int_{\He^n} \left(\frac{|\varphi(\delta_r(x) \cdot (0,0,r^2s)) - \varphi(\delta_r(x))|}{|s|^{\alpha}} \right)^{p} \, dx \right)^{q/p} \, \frac{ds}{|s|} \right)^{1/q}\\
&= r^{-\frac{Q}{p}}  \left( \int_{\R} \left( \int_{\He^n} \left(\frac{|\varphi(y\cdot (0,0,r^2s)) - \varphi(y)|}{|s|^{\alpha}} \right)^{p} \, dy \right)^{q/p} \, \frac{ds}{|s|} \right)^{1/q}\\
&= r^{2\alpha- \frac{Q}{p}}
 \left( \int_{\R} \left( \int_{\He^n} \left(\frac{|\varphi(\delta_r(x) \cdot (0,0,\sigma)) - \varphi(\delta_r(x))|}{|\sigma|^{\alpha}} \right)^{p} \, dx \right)^{q/p} \, \frac{d\sigma}{|\sigma|} \right)^{1/q}
\\&= r^{2\alpha-\frac{Q}{p}} \|\varphi\|_{\mathbf{V}^{p,q}_{\alpha}}.
\end{align*}
On the other hand, by an analogous argument, we find that
\begin{displaymath}
 \|\varphi_r\|_{p,2\alpha} = r^{-\frac{Q}{p}}\|\varphi\|_p + r^{2\alpha-\frac{Q}{p}}  \|\varphi\|_{\dot{S}_{2\alpha}^p},
\end{displaymath}
cf.\  the proof of \cite[Lemma 2.6]{2019arXiv190104767F}. Combining \eqref{ineq} with the previous computations, we get
\begin{displaymath}
 \|\varphi\|_{\mathbf{V}^{p,q}_{\alpha}} \lesssim_{\alpha,p,q} r^{-2\alpha}  \|\varphi\|_p + \|\varphi\|_{\dot{S}_{2\alpha}^p}, \qquad r > 0.
\end{displaymath}
Now \eqref{ineq_homog} follows by letting $r \to \infty$. \end{proof}

\begin{remark} For yet another proof of the original vertical versus horizontal Poincar\'e inequality \eqref{LN}, or at least the cases $1 < p \leq 2 = q$, see \cite[Section 7]{2019arXiv190104767F}. The proof in \cite{2019arXiv190104767F} infers the inequalities from a quantitative affine approximation theorem for horizontal Sobolev functions. \end{remark}

\bibliographystyle{plain}
\bibliography{references}

\end{document}